\documentclass[11pt]{amsart}
\usepackage{amssymb,comment}
\usepackage{amsmath}
\usepackage{bbm}
\usepackage{graphicx} 
\usepackage[english]{babel}

\usepackage{float}
\usepackage{mathtools}
\usepackage{hyperref}
\usepackage[noadjust]{cite}
\usepackage[margin=3.2cm]{geometry}
\hypersetup{ 
    colorlinks=true,       % false: boxed links; true: colored links
    linkcolor=blue,          % color of internal links
    citecolor=blue,        % color of links to bibliography
    filecolor=blue,      % color of file links
    urlcolor=blue           % color of external links
}

\usepackage[pdf]{pstricks} % Compile using XeLatex, View PDF
\usepackage{epsfig}
\usepackage{pst-grad} % For gradients
\usepackage{pst-plot} % For axes
\usepackage[space]{grffile} % For spaces in paths
\usepackage{etoolbox} % For spaces in paths
\makeatletter % For spaces in paths
\patchcmd\Gread@eps{\@inputcheck#1 }{\@inputcheck"#1"\relax}{}{}
\makeatother

% PACKAGES
\usepackage[all]{xy}
\usepackage{mathrsfs}
\usepackage{graphics}
\usepackage{amsthm}

\theoremstyle{plain}\newtheorem{theorem}{Theorem}[section]\newtheorem{Theorem}{Theorem}\newtheorem{proposition}[theorem]{Proposition}\newtheorem{lemma}[theorem]{Lemma}\newtheorem{corollary}[theorem]{Corollary}

\theoremstyle{definition}\newtheorem{example}[theorem]{Example}\newtheorem{remark}[theorem]{Remark}%\newtheorem{construction}[subsection]{Construction}

\newtheorem*{Notation}{Notation}

% FIELD SYMBOLS (MATHBB) AND OTHER (PROJECTIVE SPACES, SHEAVES)
\def\Q{\mathbb{Q}}\def\C{\mathbb{C}}\def\N{\mathbb{N}}\def\Z{\mathbb{Z}}\def\R{\mathbb{R}}\def\K{\mathbb{K}}\def\F{\mathbb{F}}\def\DD{\mathbb{D}}\def\KK{\mathbb{K}}

\def\ZZ2{\mathbb{\Z/ 2\Z}}
% SET THEORY, TENSORS, ETC
\def\sb{\subset}\def\su{\subset}
\def\lb{\langle}\def\rb{\rangle}\def\ot{\otimes}\def\t{\times}
% GREEK

\def\c{\gamma}\def\v{\varphi}

\def\a{\alpha}\def\b{\beta}\def\d{\delta}\def\e{\epsilon}\def\s{\sigma}\def\De{\Delta}\def\La{\Lambda}\def\o{\omega}\def\la{\lambda}\def\la{\lambda}
% MATHCAL, OVER(UNDER)LINES, WIDE(TILDE)
\def\ov{\overline}\def\wt{\widetilde}
\def\DD{\mathbb{D}^2}

% LIE GROUPS, LIE ALGEBRAS
\def\ad{\text{ad}}\def\gl{\mathfrak{gl}}
\def\sl2{\mathfrak{sl}_2}
\def\su2{\mathfrak{su}(2)}
% LINEAR ALGEBRA AND ABELIAN GROUPS: TRACES, END, KER/COKER, DIM/RANK, EXT, TOR, DEG, SIGN, VECT (K-THEORY)
\def\Aut{\text{Aut}\,}
\def\id{\text{id}}
\def\Hom{\text{Hom}\,}

% TOPOLOGY

% FUNCTION SPACES, SYMPLECTIC GEOMETRY

% ALGEBRAIC TOPOLOGY
\def\deg{\text{deg}}

% LOW DIMENSIONAL TOPOLOGY
\def\TH3{\Theta_3^{H}}

% Quantum topology
% Quantum groups
\def\Uh{U_h(\sl2)}\def\Usl2{U_q(\sl2)}\def\usl2{\wt{U}_q(\sl2)}\def\Uqgl11{U_q(\mathfrak{gl}(1|1))}\def\Uqsl11{U_q(\mathfrak{sl}(1|1))}\def\Uh{U_h(\sl2)}\def\uq{\mathfrak{u}_q(\sl2)}

\def\Rep{\text{Rep}}

\def\II1{\text{II}_1}

\def\UHrqg{\overline{U}^H_q(\gg)}

% Other
\def\mod2{\ (mod \ 2)}

% More quantum

\def\kk{\mathbb{K}}\def\gl{\mathfrak{gl}}\def\gl11{\mathfrak{gl}(1|1)}

\setcounter{tocdepth}{1}
\def\gg{\mathfrak{g}}
\def\Aut{\text{Aut}}

\def\uDH{\underline{D(H)}}

\def\gg{\bold{g}}

\begin{document}

\title[Genus bounds from unrolled quantum groups at roots of unity]{Genus bounds from unrolled quantum groups at roots of unity}
\author{Daniel L\'opez Neumann and Roland van der Veen}
\email{dlopezne@indiana.edu, r.i.van.der.veen@rug.nl}

\maketitle

\def\wD{D}
\def\sl{\mathfrak{sl}}
\def\ovU{\overline{D}}
\def\uq{\overline{U}_q(\mathfrak{sl}_2)}
\def\UUq{\overline{U}_q(\mathfrak{sl}_2)}
\def\Hh{\mathcal{H}}\def\RR{\mathcal{R}}
\def\gg{\mathfrak{g}}
\def\k{k}
\def\K{k}
\def\ka{\k^{\a}}\def\kb{\k^{\b}}\def\kc{\k^{\c}}\def\Da{D_{\a}}
\def\Dp{D(p)}
\def\ko{\ov{k}}
\def\Hp{H_p}

\def\bold{\boldsymbol}
\def\kk{\bold{k}}
\def\ll{\bold{l}}
\def\nn{\bold{n}}
\def\ZZ{\mathcal{Z}}
\def\uDH{\underline{D(H)}}
\def\tt{\theta}

\def\uq{\mathfrak{u}_{\zeta}(\sl_2)}

\def\gg{\mathfrak{g}}
\def\Uqag{U_{q,\a}(\gg)}
\def\qi{\qi}\def\di{d_i}

\def\K{K}\def\hh{\mathfrak{h}}

\def\Ur{\ov{U}_q(\gg)}
\def\UHqg{U^H_{q}(\gg)}\def\Uh{U(\mathfrak{h})}\def\Uqg{U_q(\gg)}
\def\UHr{\ov{U}^H_{q}(\gg)}\def\Ur{\ov{U}_{q}(\gg)}
\def\UHsr{\widetilde{U}^H_{q}(\gg)}
\def\RR{\mathcal{R}}
\def\Ua{\ov{U}_{\a}}\def\Ub{\ov{U}_{\b}}

\def\Uqg{U_q(\gg)}

\def\Bqg{\ov{B}_q(\gg)}\def\hh{\mathfrak{h}}\def\CC{\mathcal{C}^H}
\def\uqg{\mathfrak{u}_q(\gg)}

\def\uu{\mathfrak{u}}
\def\uqb{\uu_q(\mathfrak{b})}

\def\Dep{\De^+}
\def\ba{\boldsymbol{a}}
\def\mm{\boldsymbol{m}}

\def\ADO{\text{ADO}}\def\td{\text{d}}
\def\ADOgr{\text{ADO}_{\gg,r}}

\begin{abstract}
    For any simple complex Lie algebra $\gg$, we show that the degrees of the ``ADO" link polynomials coming from the unrolled restricted quantum group $\UHrqg$ at a root of unity give lower bounds to the Seifert genus of the link. We give a direct simple proof of this fact relying on a Seifert surface formula involving universal $\uqg$-invariants, where $\uqg$ is the small quantum group. We give a second proof by showing that the invariant $P_{\uqb}^{\theta}(K)$ of our previous work \cite{LNV:genus} coincides with such ADO invariants, where $\uqb$ is the Borel part of $\uqg$. To prove this, we show that equivariantizations of relative Drinfeld centers of crossed products essentially contain unrolled restricted quantum groups, a fact that could be of independent interest.
\end{abstract}

\section{Introduction}

\def\BB{B_{\zeta_p}(\gg)}

%I could mention that TDDs contain some Fox calculus, so there it is more obvious to conjecture geom top, but in the unrolled side one does not sees that very well.

%LINKS-GOULD are NON-SEMISIMPLE AT GENERIC q
%But our previous work does not applies directly to these, so our work shows the difference between generic and root of unity, but not of SS and NON-SS in general.

The quantum invariants of knots and links form a general class of topological invariants built from the theory of braided tensor categories, the main source being the representation categories of quantum groups $\Uqg$, which depend on a parameter $q\in\C$. These categories are semisimple if and only if $q$ is not a root of unity, and this fact leads to two different families of link polynomials. The generic case leads to the well-known Jones, HOMFLY and Kauffman polynomials and their colored versions. When $q$ is a root of unity, one obtains the Akutsu-Deguchi-Ohtsuki (ADO) invariants \cite{ADO} or the Costantino-Geer-Patureau (CGP) invariants \cite{CGP:non-semisimple}. While the former family has been intensively studied since their introduction, interest in the latter family of ``non-semisimple" invariants seems to have grown only quite recently as they appear in a new kind of TQFTs \cite{BCGP} and are conjectured to be related to the $q$-series predicted by physicists \cite{GM:two-variable, GPPV:BPS, Gukov:Coulomb}.\\

In our previous work \cite{LNV:genus}, we found a structural difference between the above two families of invariants. While relations between quantum invariants at generic $q$ and geometric topology are at most the subject of various conjectures \cite{Kashaev:hyperbolic, Garoufalidis:character}, we showed that the ADO invariants of \cite{ADO} (for $\gg=\sl_2$) provide lower bounds to the Seifert genus. This was part of a general theorem giving genus bounds from finite dimensional Hopf algebras. Previous to our work, the only genus bound known to come from quantum invariants (and that is not simply the Alexander bound) was that of Ohtsuki for the 2-loop part of the Kontsevich invariant \cite{Ohtsuki:2loop}. After our work, genus bounds for link polynomials coming from the quantum supergroup of $\sl(2|1)$ (a.k.a. Links-Gould invariants) were found in \cite{KT:Links-Gould}.\\

We now explain our results in detail. Let $\gg$ be a simple complex Lie algebra of rank $n$. Let $\hh$ be a Cartan subalgebra, $\a_1,\dots,\a_n$ a set of simple roots and let $\Dep$ be the set of positive roots. Write $$\sum_{\b\in\Dep}\b=k_1\a_1+\dots+k_n\a_n$$ where $k_1,\dots,k_n\in\N$. Let $q$ be a primitive root of unity of order $r\geq 2$ and let $r'=r$ if $r$ is odd and $r'=r/2$ if $r$ is even. We further suppose $r$ coprime to the determinant of the Cartan matrix. If $f$ is a Laurent polynomial in variables $t_1,\dots,t_n$ (possibly with half-integer powers) and if $N_i$ (resp. $n_i$) denotes the maximal power (resp. minimal power) of $t_i$ appearing in some monomial in $f$, then we define $\deg_{t_i}f:=N_i-n_i$.

\bigskip

\begin{Theorem}
\label{theorem: ADO genus}
Suppose $\gg$ is simply-laced. If $L\sb S^3$ is an $s$-component link, then there is an isotopy invariant $$\ADOgr'(L)\in (t_1^{k_1}\cdots t_n^{k_n})^{\frac{(r'-1)(s-1)}{2}}\cdot \Q(q)[t_1^{\pm 1},\dots,t_n^{\pm 1}] $$
    satisfying 
    $$\deg_{t_i} \ADOgr'(L)\leq (2g(L)+s-1)(r'-1)k_i$$
    
  \noindent  for every $i=1,\dots,n$, where $g(L)$ is the Seifert genus of $L$.
\end{Theorem}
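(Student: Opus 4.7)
The plan is to give two proofs, as advertised in the abstract. The first proof proceeds by expressing $\ADOgr'(L)$ as a universal $\uqg$-valued invariant of a Seifert surface, in the spirit of \cite{LNV:genus}, and then reducing the degree bound to a weight count in the small quantum group. The second proof identifies $\ADOgr'(L)$ with the invariant $P^{\theta}_{\uqb}(L)$ already studied in \cite{LNV:genus} and inherits the genus bound from that work.

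For the direct proof, I would first set up $\ADOgr'(L)$ as the link invariant obtained from the unrolled restricted quantum group $\UHrqg$ by coloring each component of $L$ with a generic typical $\UHrqg$-module whose highest weight is parameterized by $t_1,\dots,t_n$, with the Cartan generator $H_i$ acting as multiplication by a formal parameter that exponentiates to $t_i$. The key step is a Seifert-surface formula: given a Seifert surface $\S$ of genus $g$ for $L$ with a band presentation consisting of $N = 2g+s-1$ bands glued to a disk (the number $N$ being forced by $\chi(\S) = 2-2g-s$), one writes $\ADOgr'(L)$ as a trace over the closing disk of a product, indexed by bands, of elements of the small quantum group $\uqg$ obtained from the $R$-matrix of $\uqg$ and its two-sided integral. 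The variables $t_1,\dots,t_n$ then enter as characters of the Cartan evaluated on the holonomies around the bands.

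The degree bound then becomes a purely algebraic weight count in $\uqg$. At a root of unity of order $r$, the restricted small quantum group satisfies $E_{\b}^{r'}=F_{\b}^{r'}=0$ for every $\b\in\Dep$, so its PBW basis consists of monomials $\prod_{\b}E_{\b}^{m_{\b}}\prod_{\b}F_{\b}^{n_{\b}}\prod_{i}K_{i}^{p_{i}}$ with $0\le m_{\b},n_{\b}\le r'-1$. Projecting the $\hh^{*}$-weight of such a monomial onto the $\a_i$-axis gives a value in the interval $[-(r'-1)k_i,\,(r'-1)k_i]$, since $\sum_{\b\in\Dep}\b=\sum_i k_i\a_i$. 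Because the $t_i$-exponent appearing in $\ADOgr'(L)$ records the $\a_i$-component of the sum of weights contributed by the bands, and because each band contributes a single $\uqg$-factor, the $t_i$-degree of $\ADOgr'(L)$ is bounded by $N(r'-1)k_i=(2g+s-1)(r'-1)k_i$. The fractional normalization $(t_1^{k_1}\cdots t_n^{k_n})^{(r'-1)(s-1)/2}$ is the half-shift that centers this weight window around the origin, and it is pinned down by the pivotal/ribbon structure used to close each component.

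The main obstacle is the Seifert-surface formula itself: one must check that the universal $\uqg$-construction evaluated on any band presentation of $\S$ is independent of the presentation and genuinely computes $\ADOgr'(L)$. This involves verifying invariance under the moves relating two band presentations of the same Seifert surface, which in turn relies on the bialgebra axioms of $\uqg$ together with the defining properties of its two-sided integral, in the style of \cite{LNV:genus}. The second proof finesses this step entirely: it invokes the result, sketched in the abstract, that the unrolled restricted quantum group embeds into an equivariantization of the relative Drinfeld center of $\uqb$ with respect to $\Aut$. This identification matches the colored link invariant from $\UHrqg$ with $P^{\theta}_{\uqb}(L)$ of \cite{LNV:genus}, and the genus bound of that paper then applies verbatim, yielding Theorem~\ref{theorem: ADO genus} through a more categorical but less hands-on route.
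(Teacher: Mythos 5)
Your overall strategy --- express $\ADOgr'(L)$ via a band presentation of a Seifert surface, push it through a universal $\uqg$-invariant, and bound degrees using that $E_\b^{r'}=F_\b^{r'}=0$ in $\uqg$ --- matches the paper's first proof in outline, but two steps contain genuine gaps that would prevent the argument from landing on the theorem as stated.

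First, your ``weight count'' mechanism does not give the claimed bound. You argue that each PBW monomial has $\hh^*$-weight whose $\a_i$-component lies in $[-(r'-1)k_i,(r'-1)k_i]$, and that the $t_i$-exponent records the $\a_i$-component of the weight. Taken at face value this gives a window of length $2(r'-1)k_i$ per band, hence $\deg_{t_i}\ADOgr'(L)\le 2(2g+s-1)(r'-1)k_i$ --- off by a factor of two. The correct mechanism, isolated in the paper's Proposition~\ref{lemma: ESSENTIAL LEMMA}, is \emph{not} a weight count on $\uqg$ but a statement about matrix coefficients of the $\uqg$-action on $X=V_\la\ot V_\la^*$: in the standard basis, $E_i$ acts with coefficients of the form $c(q)+d(q)q^{-2\la_i}$ (degree $\le1$ in $q^{-2\la_i}$ and crucially \emph{no} positive powers of $q^{2\la_i}$), while $F_i$ and $K_\c$ act with $\Q(q)$-coefficients. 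This one-sidedness --- which is invisible in a symmetric weight-window picture, since $X$ has weights in $L_R$ and its weights do not carry the $\la$-dependence --- is precisely what halves the window; the pivotal overall factor $q^{(r'-1)(\la,2\rho)(2g+s-1)}$ then translates the one-sided degree range into a symmetric window of the claimed length. You would need Lemmas~\ref{lemma: multip 2 product of EI's}--\ref{lemma: commutation for simple root} and the explicit analysis of how $E_i$ acts on $V_\la$, $V_\la^*$ and their tensor product to see this.

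Second, your description of the Seifert-surface formula --- a trace built from the $R$-matrix and the \emph{two-sided integral} of $\uqg$, requiring a verification of invariance under band-presentation moves --- is the construction of \cite{LNV:genus}, not the one used here. The paper's Proposition~\ref{prop: Seifert formula} opens $L$ to a $(1,1)$-tangle $L_o$ colored by $V_\la$, expresses $\ADOgr'(L)=\lb L_o\rb$ via a band tangle $T$ inside a standard disk, and evaluates using coevaluations, the pivotal map $j$, and left evaluations; no integral appears and no band-move invariance needs to be checked, because $\lb L_o\rb$ is already a Reshetikhin--Turaev invariant of $L$. This change of formula is what allows the variable-by-variable bound $\deg_{t_i}$ for multi-component links rather than a total-degree bound for knots.

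Finally, your second route does not rescue the statement. You assert that identifying $\ADOgr'$ with $P^\theta_{\uqb}$ of \cite{LNV:genus} ``yields Theorem~\ref{theorem: ADO genus} verbatim,'' but the paper itself points out that the genus bound of \cite{LNV:genus} controls only the \emph{total} degree and only for knots; Theorem~\ref{theorem: ADO genus} (each $\deg_{t_i}$ separately, arbitrary $s$-component links) is strictly stronger than what Theorem~\ref{corollary: P coincides with ADO} plus \cite{LNV:genus} can provide. So the ``second proof'' advertised in the abstract is a proof of a weaker corollary, not of the full statement.
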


\bigskip

The definition of $\ADOgr'(L)$ is based on the category $\CC$ of weight modules over the unrolled restricted quantum group $\UHrqg$ at a root of unity, which is a ribbon category \cite{GP:nonrestricted, GP:trace-projective-quantum-groups, Rupert:unrolled}. In this case, there are finite-dimensional weight modules $V_{\la}$ with highest weight $\la$ for all $\la\in\hh^*$ and $\ADOgr'(L)|_{t_i=q^{2\la(H_i)}}$ is the result of applying the Reshetikhin-Turaev construction \cite{RT1} to any $(1,1)$-tangle with closure $L$ in which all components have the same color $V_{\la}$. We denote it by $\ADO'$, since the original ADO \cite{ADO} and generalizations \cite{CGP:non-semisimple} are certain normalizations of ours. Our proof is based on a Seifert surface formula involving the action of a universal $\uqg$-invariant $Z_T$ on $V_{\la}\ot V_{\la}^*$, where we can apply the degree bound of Proposition \ref{lemma: ESSENTIAL LEMMA}. For the non-simply-laced case see Theorem \ref{theorem: non-simply-laced Thm 1}.\\

The invariant $\ADO'_{\sl_2,4}$ is the 1-variable Alexander polynomial $\De_L$ \cite{Murakami:Alexander} and the genus bound for $\ADO'_{\sl_2,r}(K)$ was shown in \cite{LNV:genus}. The invariant $\ADO'_{\sl_3,4}$ was studied by Harper in \cite{Harper:sl3-invariant} and shown to distinguish the Kinoshita-Terasaka from the Conway knot. As an example, we show that $\ADO'_{\sl_3,4}$ actually gives the correct genus bounds for these two knots.\\

%In the case of $\sl_{n+1}$ one has $k_i=i(n+1-i)$ for all $i$, %and for knots $K\sb S^3$, the above bound becomes $$\deg_{t_i} \text{ADO}'_{\sl_{n+1},r}(K)\leq 2g(K)(r'-1)i(n+1-i)$$for all $i=1,\dots,n$. in particular, the total degree satisfies $$\deg \ \text{ADO}'_{\sl_{n+1},r}(K)\leq 2g(K)(r'-1)\frac{n(n+1)(n+2)}{6}.$$

%Moreover, we prove many other symmetries for such invariants: orientation independence, mirror image, symmetry $x\mapsto x^{-1}$, integer coefficients at $r=4$.  

\def\CCC{\wt{\mathcal{C}}^H}
In our second theorem, we explain the above bound through our previous work. Based on \cite{LN:TDD}, we built in \cite{LNV:genus} an $n$-variable knot polynomial $P_H^{\theta}(K)$ out of a $\N^n$-graded Hopf algebra $H$ of finite dimension and we showed that the total degree $\deg \ P_H^{\theta}$ gives lower bounds to the Seifert genus. This construction relied on the relative Drinfeld center of a crossed product $\Rep(H)\rtimes \Aut(H)$, or equivalently, the twisted Drinfeld double of Virelizier \cite{Virelizier:Graded-QG}. When $H=\uqb$, the Borel part of the small quantum group $\uqg$, there is an action of $\hh^*\cong\C^n$ on $\Rep(\uqb)$ and one gets a braided $\hh^*$-crossed category $\ZZ_{\Rep(\uqb)}(\Rep(\uqb)\rtimes \hh^*)$. The $\hh^*$-equivariantization of $\ZZ_{\Rep(\uqb)}$ is a braided category in the usual sense. Kauffman-Radford pairs on $\uqb$ (see Subsection \ref{subs: ribbon for TDDs}) induce ribbon structures on the equivariantization. Let $(\CCC, c^{\chi})$ be the ribbon category of Subsection \ref{subs: AJ and unrolled}, this is just a slight variation of $\CC$.

\bigskip

\begin{Theorem}
\label{theorem: equivariantization}
The category $(\CCC,c^{\chi})$ embeds as a full ribbon subcategory of the $\hh^*$-equivarianti-
zation of $\ZZ_{\Rep(\uqb)}(\Rep(\uqb)\rtimes\hh^*)$. Here, the equivariantization is endowed with a ribbon structure coming from a canonical Kauffman-Radford pair on $\uqb$.
\end{Theorem}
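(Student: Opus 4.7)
The plan is to construct an explicit embedding functor
$$F\colon (\CCC, c^{\chi}) \longrightarrow \bigl(\ZZ_{\Rep(\uqb)}(\Rep(\uqb)\rtimes\hh^*)\bigr)^{\hh^*}$$
sending a weight $\UHrqg$-module $V$ to the underlying vector space with its induced $\uqb$-action, decorated with (a) an $\hh^*$-grading by weight spaces, (b) a half-braiding with every $\uqb$-module, and (c) an $\hh^*$-equivariant structure. First I would recall the description of the relative Drinfeld center of the crossed product from the previous work \cite{LNV:genus}: an object is a $\uqb$-module $V$ carrying an extra $\hh^*$-grading compatible with the weight grading of $\uqb$, together with a half-braiding $\sigma_X : V\ot X\to X\ot V$ natural in $X\in\Rep(\uqb)$. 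Its $\hh^*$-equivariantization amounts to gluing all the graded sectors via a compatible family of isomorphisms.

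The construction of $F$ goes as follows. Given a weight module $V$ over $\UHrqg$, decompose $V=\bigoplus_{\mu\in\hh^*}V_\mu$ by the action of the unrolled Cartan $H_i$. Restricting the action to the subalgebra $\uqb\sbq\Ur$, one obtains a $\uqb$-module; the decomposition $V=\bigoplus_\mu V_\mu$ provides the $\hh^*$-grading on $F(V)$. The half-braidings are read off from the universal $R$-matrix of $\UHrqg$: since $R$ lies in $\Ur\,\wh{\ot}\,\Ur$ and one tensor factor can be pushed onto the $\uqb$-side while the other acts on $V$ through $\uqb$ together with the weight data $q^{\mu}$ recorded by the $\hh^*$-grading, the formula for $\sigma_X$ is well-defined precisely on weight modules. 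The $\hh^*$-equivariant structure is the tautological shift identifying the $\mu$-graded sector with the $(\mu+\chi)$-graded sector of the $\chi$-twist; full faithfulness is then immediate, since the data of an $\hh^*$-graded $\uqb$-module together with a half-braiding assembled equivariantly is nothing more than a weight module over $\UHrqg$ in disguise.

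The main obstacle will be matching the ribbon structures. On $(\CCC,c^{\chi})$ the twist and the braiding are those of \cite{GP:nonrestricted, Rupert:unrolled}, built from a formal completion of the $R$-matrix of $\uqg$ together with the pivotal element $K^{2\rho}$ upgraded via the unrolled generators. On the equivariantization, a ribbon structure comes from a Kauffman-Radford pair $(l,\lambda)$ on $\uqb$, producing an equivariant twist in the sense of Subsection \ref{subs: ribbon for TDDs}. The task is to choose the canonical Kauffman-Radford pair on $\uqb$ so that the resulting twist agrees, on each weight space, with the scalar $q^{-\langle\mu,\mu+2\rho\rangle}$ times the action of the ribbon element of the small quantum group. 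This is a direct but delicate computation: it reduces to expressing the ribbon element of $\UHrqg$ as the product of a ribbon-like element of $\uqg$ and a weight-dependent scalar encoded by the Kauffman-Radford pair, and then verifying the compatibility with the coproduct as required for equivariance.

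Once the ribbon compatibility is established, the functor $F$ is a ribbon functor, and fullness on morphisms together with the explicit description of objects shows that $F$ realizes $(\CCC,c^{\chi})$ as a full ribbon subcategory, as claimed. The image consists of those equivariant objects in the relative center whose underlying $\uqb$-module extends to an action of the full $\Ur$, which by construction is exactly the subcategory of weight modules with integrable Cartan action.
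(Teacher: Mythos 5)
Your overall strategy---restrict a weight module to $\uqb$, build the half-braiding from an $R$-matrix, encode the unrolled Cartan action as the equivariant structure, and match the ribbon data through a Kauffman--Radford pair---is the same strategy the paper takes (the paper phrases it in terms of modules over the twisted Drinfeld double $\uDH$ and then passes to the semidirect product $\uA = \{\C[\hh^*]\ltimes D(\uqb)_{\la}\}$). However, there are two genuine gaps in the way you carry it out.

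First and most importantly, you omit the Drinfeld twist. If you transport the $\UHr$-action on a weight module $V$ to a $D(\uqb)_{\la}$-module structure (equivalently, a module over the quotient $\mathfrak{u}_{\la} = D(\uqb)_{\la}/(\K_i\K_i'-1)$), the coproduct you inherit on $\mathfrak{u}_{\la}$ is
\[
\De_{\la,\mu}(\e_i)=\e_i\ot\K_i+q^{-2\la_i}1\ot\e_i ,
\]
which, even after rescaling $\EE_i=q^{\la_i}\e_i$ and $\KK_i=q^{\la_i}\K_i$, only becomes $\De_{\la,\mu}(\EE_i)=\EE_i\ot\KK_i+q^{-\la_i}1\ot\EE_i$. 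This is \emph{not} the coproduct of $\UHrqg$, so the naive embedding is not monoidal and your claim that ``the formulas match'' fails. The paper fixes this by applying the Drinfeld twist $J_{\la,\mu}=1\ot\phi_{\la/2}$ on the semidirect product $\uA$, which kills the extra $q^{-\la_i}$ factor and simultaneously changes the $R$-matrix to $R^J_{\la,\mu}=(\phi_{\mu/2}\ot\phi_{\la/2})(\id\ot\phi_{\la/2})(R_{\la,\mu})$; only then does the inclusion of $(\CCC,c^{\chi})$ become monoidal and braided (Propositions \ref{prop: A tilde contains unrolled} and \ref{prop: braided inclusion}). Without this twist your ``pushing one tensor factor of $R$ onto the $\uqb$-side'' does not produce the half-braiding of the crossed product (whose tensor rule is $(X,\la)\ot(Y,\mu)=(X\ot\phi_{\la}(Y),\la+\mu)$, not the bare $\uqb$-tensor product).

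Second, you conflate two distinct pieces of data. In $\ZZ_{\Rep(\uqb)}(\Rep(\uqb)\rtimes\hh^*)$ each object carries a single label $\la\in\hh^*$ coming from the crossed-product grading; a weight module with weights in $\la+L_R$ therefore lands in the $\la$-component, not ``spread over'' $\hh^*$. The weight-space decomposition $V=\oplus_{\mu}V(\mu)$ is what encodes the $\hh^*$-\emph{equivariant} structure $f_z: \phi_z(V)\to V$ (acting by $q^{2(\mu,z)}$ on $V(\mu)$); it is not the $\hh^*$-grading of the category. Consequently, your claimed characterization of the image (``those objects whose $\uqb$-action extends to $\Ur$, i.e., integrable Cartan action'') is off: the correct condition, shown in Proposition \ref{prop: A tilde contains unrolled}, is that the $\hh^*$-action be diagonalizable, and the $\UHr$-relations are recovered only \emph{after} the rescaling and the Drinfeld twist that you skipped.

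The ribbon part of your sketch is directionally correct (the paper indeed computes an explicit Kauffman--Radford triple $(\b,b,p)$ on $\uqb$ with $b=\K_{\rho}^{1-r'}$, $\b(\K_i)=q^{1-r'}$, $p(\la)=q^{(r'-1)(2\rho,\la)}$ and checks that the induced pivot projects to $\KK_{2\rho}^{1-r'}$), but you should flag that $b^2=a$ requires $b\in\uqb$, which needs the coprimality hypothesis to be checked when $r$ is even.
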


\bigskip

We find interesting that the braided and ribbon structure of $\CC$ can be deduced from that of the small quantum group by the above theorem. Relative Drinfeld centers appear in Turaev-Virelizier's work on HQFT \cite{TV:graded-center}, so our theorem might be useful in order to define TQFTs for ADO or CGP invariants in a Turaev-Viro style. From the above theorem we deduce the following (which was shown in \cite{LNV:genus} for $\sl_2$).

\begin{Theorem}
\label{corollary: P coincides with ADO}
    The multivariable polynomial invariant $P_{\uqb}^{\theta}(K)$ of \cite{LNV:genus} equals $\ADO'_{\gg,r}(K)$.
\end{Theorem}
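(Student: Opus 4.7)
The plan is to deduce this as a direct consequence of Theorem \ref{theorem: equivariantization}. Recall from \cite{LNV:genus} that $P_{\uqb}^{\theta}(K)$ is defined as an RT-type scalar associated with the twisted Drinfeld double of $\uqb$; this double is canonically equivalent to the $\hh^*$-equivariantization $\mathcal{E}$ of the relative center $\ZZ_{\Rep(\uqb)}(\Rep(\uqb)\rtimes \hh^*)$, and the ribbon structure on $\mathcal{E}$ used in \cite{LNV:genus} is precisely the one induced by the canonical Kauffman--Radford pair on $\uqb$. Thus both sides of the desired equality are RT scalars in compatible ribbon categories, and only the coloring objects need to be matched.

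Concretely, I would proceed as follows. Parametrize $P_{\uqb}^{\theta}(K)$ as a function of $\la\in\hh^*$ via $t_i=q^{2\la(H_i)}$. In this form, $P_{\uqb}^{\theta}(K)|_\la$ is the RT invariant of $K$ colored by a specific object $W_\la\in\mathcal{E}$, while $\ADO'_{\gg,r}(K)|_\la$ is the RT invariant of $K$ colored by the simple highest-weight module $V_\la$ of $\UHrqg$ in $(\CCC,c^\chi)$. By Theorem \ref{theorem: equivariantization} the latter is a full ribbon subcategory of $\mathcal{E}$, and since RT scalars are preserved under full ribbon embeddings, it suffices to check that the image of $V_\la$ in $\mathcal{E}$ is isomorphic to $W_\la$ as a ribbon object. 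The equality of the two polynomials then follows by varying $\la$ over a Zariski-dense subset of $\hh^*$.

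The main obstacle is precisely this identification of coloring objects. The module $W_\la$ arises as a regular-type construction over $\uqb$ twisted by the character $\la\in\hh^*$, while $V_\la$ is an irreducible weight module for the full unrolled algebra $\UHrqg$. These two modules are not abstractly isomorphic as vector spaces, but their images inside $\mathcal{E}$ should agree up to a ribbon isomorphism. I expect this to be read off directly from the explicit form of the embedding constructed in the proof of Theorem \ref{theorem: equivariantization}, by matching highest-weight vectors and checking that the weight decomposition on the $\UHrqg$ side is sent to the $\hh^*$-grading on the equivariantization side. A secondary technical point is to verify that the trace normalization used on $\CCC$ (the modified/open trace coming from the $(1,1)$-tangle presentation) matches the categorical trace on $\mathcal{E}$; once that is in place, the RT scalars coincide on the nose, proving the theorem.
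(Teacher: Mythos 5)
Your high-level strategy — deduce the theorem from Theorem~\ref{theorem: equivariantization} by recognizing both sides as Reshetikhin--Turaev scalars in ribbon categories related by a ribbon embedding — is the right one, and it is indeed how the paper proceeds. But two things you flag as secondary, or defer, are precisely where the content of the proof lives, and one premise in your setup is off.

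First, there is no separate coloring object $W_\la$ for $P_{\uqb}^{\theta}(K)$. The invariant $P_{\uqb}^{\theta}(K)$ from \cite{LNV:genus} is defined by applying the \emph{counit} $\ep_{D(\uqb')}$ to a twisted universal invariant $Z^{\theta}_{\underline{D(\uqb')}}(K_o)$, not by coloring $K$ with a module. So the strategy "match $W_\la$ with $V_\la$ up to ribbon isomorphism" does not get off the ground. What the paper does instead is evaluate the universal invariant on the highest-weight vector $v_0\in V_\la$: since everything except the counit component of $Z^\la_{\underline{D(\uqb')}}(K_o)$ kills $v_0$, one has $Z^\la_{\underline{D(\uqb')}}(K_o)\, v_0=\ep(Z^\la_{\underline{D(\uqb')}}(K_o))\, v_0$. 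This is the bridge between the counit-based definition of $P$ and the RT scalar $\lb K_o\rb$ on $V_\la$; it is an elementary highest-weight observation, not a module isomorphism, and it is not something you can leave implicit.

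Second, your proposal does not engage with the framing normalization, and this is where most of the computation lies. The definition of $P_{\uqb}^{\theta}(K)$ carries the prefactor $t_1^{w|\La_l|_1/2}\cdots t_n^{w|\La_l|_n/2}$; passing from $Z^{\theta}_{\underline{D(\uqb')}}(K_o)$ to $Z^{\theta}_{\uA}(K_o)$ introduces a $\theta^w$; switching the braiding to $c^\chi$ inside $V_\la$ contributes a $q^{w(\la,\la)}$; and the unframed $\ADOgr'$ normalization carries $q^{-w(\la,\la-(r'-1)2\rho)}$. The proof consists of checking these four framing contributions cancel, using the explicit computation $|\La_l|_i=(r'-1)k_i$ from the cointegral formula (\ref{eq: cointegral}). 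None of this is addressed in your proposal, so as written it is not yet a proof. Finally, the "trace normalization" concern you raise at the end is a non-issue here: $P$ is only defined for knots, and for a knot colored by a simple $V_\la$ the $(1,1)$-tangle endomorphism is already a scalar by Schur; no modified trace enters.
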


Note that the genus bound of \cite{LNV:genus} was only for the total degree and only for knots in $S^3$, so Theorem \ref{theorem: ADO genus} is still more general that what can be deduced via Theorem \ref{corollary: P coincides with ADO} and \cite{LNV:genus}.\\

The plan of the paper is the following. Section \ref{section: unrolled} contains some preliminaries on unrolled quantum groups and weight modules. The main result here is the degree bound of Proposition \ref{lemma: ESSENTIAL LEMMA}. In Section \ref{section: link invariants} we define the $\ADOgr'$ invariants of links, give a Seifert surface formula for them in terms of universal $\uqg$-invariants and prove Theorem \ref{theorem: ADO genus}. We also give some computations in the case $\gg=\sl_3$ and $r=4$. Finally, in Section \ref{section: equivariantization thm} we define crossed products, relative Drinfeld centers, equivariantization and we prove Theorems \ref{theorem: equivariantization} and \ref{corollary: P coincides with ADO}.

\def\qi{q}

\begin{Notation}
During the whole paper, $\gg$ denotes a complex simple Lie algebra of rank $n$, $\hh$ a Cartan subalgebra, $\{\a_1,\dots,\a_n\}$ a set of simple roots, $(a_{ij})$ its Cartan matrix and $\Dep$ the set of positive roots. For simplicity, we will work in the simply-laced case, so $(a_{ij})$ is symmetric (the non-simply-laced case will be treated in Subsection \ref{subs: non-simply-laced}). We define $H_1,\dots,H_n\in \hh$ by $\a_i(H_j)=a_{ij}$ for all $i,j$. We let $(\a_i,\a_j)=a_{ij}$ and extend it to a form $(-,-)$ on $\hh^*$. We denote by $L_R, L_W\sb \hh^*$ the root and weight lattices of $\gg$ respectively. We denote $\rho=\frac{1}{2}\sum_{\b\in\Dep}\b\in L_W$. We also denote by $L_R^+$ the set of positive vectors in $L_R$ and for any $\c_1,\c_2\in L_R$ we write $\c_1> \c_2$ if $\c_1-\c_2\in L_R^+.$\\

Let $q$ be a primitive root of unity of order $r$, say $q=e^{\frac{2\pi i}{r}}$, and for every $z\in \C$ set $q^z=e^{\frac{2\pi i z}{r}}$ and $[z]_{\qi}=\frac{\qi^z-\qi^{-z}}{\qi-\qi^{-1}}$. For $k\in\N$ denote $[k]_q!=[k]_q[k-1]_q\cdots [1]_q$. For simplicity, we will suppose that $r$ is coprime to $\det(a_{ij})$. This can be avoided by working in the weight lattice $L_W$, but we don't do this.   \\

\end{Notation}

\section{Unrolled restricted quantum groups}
\label{section: unrolled}

In this section we recall some standard facts from unrolled quantum groups: PBW basis, restricted quantum groups, weight modules, braiding, pivotal and ribbon structure, etc. We follow mainly \cite{GP:trace-projective-quantum-groups} and \cite{Rupert:unrolled}. The last two subsections are the build up for Proposition \ref{lemma: ESSENTIAL LEMMA}.

%For non-simply-laced
\begin{comment}
In this section, $\gg$ denotes a complex simple Lie algebra of rank $n$, $\hh$ a Cartan subalgebra, $\{\a_1,\dots,\a_n\}$ a set of simple roots, $\Dep$ the set of positive roots, $(a_{ij})$ its Cartan matrix and $\di$ are such that $(\di a_{ij})$ is symmetric. Let $\qi=q^{d_i}$ and for every $k\in \Z$ let $[k]_{\qi}=\frac{\qi^k-\qi^{-k}}{\qi-\qi^{-1}}.$ We denote by $L_R\sb \hh^*$ the root lattice of $\gg$. We also denote by $L_R^+$ the set of positive vectors in $L_R$ and for any $\c_1,\c_2\in L_R$ we write $\c_1> \c_2$ if $\c_1-\c_2\in L_R^+.$ \\

In what follows, we let $q$ be a primitive root of unity of order $r$, say $q=e^{\frac{2\pi i}{r}}$, for every $z\in \C$ we denote $q^z=e^{\frac{2\pi i z}{r}}$. For simplicity, we will suppose that $r$ is coprime to the determinant of the Cartan matrix. This can be avoided by working in the weight lattice $L_W$, but we don't do this.
\end{comment}

\subsection{Unrolled quantum groups} Let $\UHqg$ be the algebra with generators $E_i,F_i,K_i^{\pm 1}, H_i$, $i=1,\dots,n$ and relations
\begin{align*}
[H_i,E_j]&=a_{ij}E_j, & [H_i,F_j]&=-a_{ij}F_j, & [H_i,K_j^{\pm 1}]&=0=[H_i,H_j],\\
K_iE_j&=\qi^{a_{ij}}E_jK_i, & K_iF_j&=\qi^{-a_{ij}}F_jK_i & K_iK_j&=K_jK_i, \\
[E_i,F_j]&=\d_{ij}\frac{K_i-K_i^{-1}}{\qi-\qi^{-1}}, & K_iK_i^{-1}&=K_i^{-1}K_i=1, & & 
\end{align*}
for every $i,j$ and
\begin{align*}
  \sum_{k=0}^{1-a_{ij}}(-1)^k\left[\begin{matrix} 1-a_{ij} \\ 
 k\end{matrix}\right]_{\qi}E_i^{1-a_{ij}-k}E_jE_i^k&=0,  \\
  \sum_{k=0}^{1-a_{ij}}(-1)^k\left[\begin{matrix} 1-a_{ij} \\ 
 k\end{matrix}\right]_{\qi}F_i^{1-a_{ij}-k}F_jF_i^k&=0, 
\end{align*}
for $i\neq j$. This is a Hopf algebra if we define 
\begin{align*}
\De(E_i)&=E_i\ot K_i+1\ot E_i, & \De(F_i)&=K_i^{-1}\ot F_i+F_i\ot 1, \\
 \De(K_i)&=K_i\ot K_i, & \De(H_i)&=H_i\ot 1+1\ot H_i
\end{align*}
for every $i$. For $\c=a_1\a_1+\dots+a_n\a_n\in L_R$ we denote $K_{\c}:=K_1^{a_1}\dots K_n^{a_n}$.

\medskip

We denote by $\Uqg\sb \UHqg$ the subalgebra generated by $E_i,F_i,K_i^{\pm 1}$. We denote by $\Uh$ the subalgebra of $\UHqg$ generated by the $H_i$'s. Let $V_+$ (resp. $V_-$) be the subalgebra generated by $E_1,\dots,E_n$ (resp. $F_1,\dots,F_n$).

\medskip

The algebra $\Uqg$ is $L_R$-graded if we set $d(E_i)=\a_i, d(F_i)=-\a_i$ and we extend by multiplicativity. This way $\Uqg$ becomes a $L_R$-graded Hopf algebra.

\subsection{PBW basis} 

Let $\Dep$ be the set of positive roots of $\gg$. The root vectors $E_1,\dots, E_n$ correspond to the simple roots $\a_1,\dots,\a_n$. To define root vectors for all positive roots consider a reduced decomposition of the longest element of the Weyl group $w_0=s_{i_1}\dots s_{i_N}$. Then all positive roots occur exactly once in $$\b_1=\a_{i_1}, \b_2=s_{i_2}(\a_{i_2}),\dots, \b_N=s_{i_1}\dots s_{i_{N-1}}(\a_{i_N}).$$
Then we define $$E_{\b_k}=T_{i_1}\dots T_{i_{k-1}}(E_{i_k}), \ \ F_{\b_k}=T_{i_1}\dots T_{i_{k-1}}(F_{i_k}).$$
where the $T_i$'s are determined by the braid action of $B(\gg)$ on $\UHqg$ \cite[Theorem 8.1.2]{CP:BOOK}: 
\begin{align*}
    T_i(E_j)&=\sum_{k=0}^{-a_{ij}}(-1)^{k-a_{ij}}\frac{q^{-k}}{[-a_{ij}-k]_{\qi}![k]_{\qi}!}E_i^{-a_{ij}-k}E_jE_i^k,\\
     T_i(F_j)&=\sum_{k=0}^{-a_{ij}}(-1)^{k-a_{ij}}\frac{q^{k}}{[-a_{ij}-k]_{\qi}![k]_{\qi}!}F_i^kF_jF_i^{-a_{ij}-k}
\end{align*}
for $i\neq j$, and $T_i(E_i)=-F_iK_i, T_i(F_i)=-K_i^{-1}E_i$.
\medskip

Note that $T_i(E_j)$ has $L_R$-degree $\a_j-a_{ij}\a_i=s_i(\a_j)$. It follows that $E_{\b_k}$ has degree $s_{i_1}\dots s_{i_{k-1}}(\a_{i_k})=\b_{k}$. Thus, $d(E_{\b})=\b$ and similarly $d(F_{\b})=-\b$ for all $\b\in \Dep$.
\medskip

Now for every $I:\Dep\to \N_{\geq 0}$ set $$E^I:=E_{\b_N}^{I(\b_N)}\cdots E_{\b_1}^{I(\b_1)}, \ \ F^I:=F_{\b_N}^{I(\b_N)}\cdots F_{\b_1}^{I(\b_1)}.$$
If for every $I$ we denote $$d(I)=\sum_{\b\in\Dep}I(\b)\b\in L_R,$$ then $d(E^I)=d(I)$ and $d(F^I)=-d(I)$.

\begin{proposition}(\cite{CP:BOOK})\label{lemma: PBW belongs to V+}
For each positive root $\b$, $E_{\b}\in V_+$ and $\{E^I \ | \ I:\Dep\to \N_{\geq 0}\}$ is a basis of $V^+$. Similarly $F_{\b}\in V_-$ and $\{F^I \ | \ I:\Dep\to \N_{\geq 0}\}$ is a basis of $V^-.$ 
\end{proposition}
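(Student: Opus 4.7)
The statement is the standard PBW theorem for $U_q(\mathfrak{g})$ (restricted to the nilpotent halves), so my plan is to reconstruct the classical Lusztig/Chari--Pressley argument, which has two independent parts: (a) that each root vector $E_\beta$ actually lies in the subalgebra $V_+$ generated by $E_1,\dots,E_n$, and (b) that the ordered monomials $E^I$ form a basis. By the obvious automorphism $E_i\leftrightarrow F_i$ (together with the appropriate change of coproduct) the claim for $V_-$ reduces to the claim for $V_+$, so I will only treat the $E$-side.

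For (a), the plan is induction on $k$ in the fixed reduced expression $w_0 = s_{i_1}\cdots s_{i_N}$. The subtle point is that the braid operator $T_i$ does not preserve $V_+$ in general, since $T_i(E_i)=-F_iK_i\notin V_+$, whereas $T_i(E_j)\in V_+$ for $j\neq i$ by the explicit formula recalled above. The key observation, due to Lusztig, is that because $s_{i_1}\cdots s_{i_N}$ is \emph{reduced}, the partial word $s_{i_1}\cdots s_{i_{k-1}}$ sends $\a_{i_k}$ to a positive root $\beta_k$, and one can show by induction on $k$ that the resulting element $T_{i_1}\cdots T_{i_{k-1}}(E_{i_k})$ is expressible as a polynomial in $E_1,\dots,E_n$ (never requiring one to apply a $T_i$ to an $E_i$ that has not first been mapped away from $V_+$ back into $V_+$). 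Equivalently, one shows that $V_+ = \bigcap_{i} T_i^{-1}(V_+)[\text{appropriate term}]$, which is a short combinatorial check using the braid relations. This gives $E_\beta\in V_+$ for every $\beta\in\Delta^+$.

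For (b), the spanning property follows from the Levendorskii--Soibelman commutation relations: for $\beta_j<\beta_k$ in the convex order induced by the reduced expression,
\begin{equation*}
E_{\beta_k}E_{\beta_j}-q^{(\beta_j,\beta_k)}E_{\beta_j}E_{\beta_k}=\sum_{I} c_I\, E^I,
\end{equation*}
with the sum ranging over multi-indices $I$ supported strictly between $\beta_j$ and $\beta_k$ and with $d(I)=\beta_j+\beta_k$. Iterating this straightening rewrites any unordered monomial in the $E_\beta$ as a linear combination of ordered monomials $E^I$, so these span $V_+$. Linear independence is then obtained by a dimension count using the $L_R$-grading: the $E^I$ with $d(I)=\gamma$ are contained in the graded piece $(V_+)_\gamma$, and a generic-$q$ argument (or Lusztig's integral form) shows $\dim(V_+)_\gamma$ is exactly the number of such $I$, which forces linear independence.

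The main obstacle is purely bookkeeping in step (a): one must verify carefully that along a reduced expression the braid operators never force the evaluation of $T_i(E_i)$, and that each $E_{\beta_k}$ thereby lands in $V_+$ with $L_R$-degree $\beta_k$ (the latter following from $s_i$ acting on the grading by $\a_j\mapsto \a_j - a_{ij}\a_i$, as already observed in the excerpt). Once (a) is in hand, the spanning and linear independence in (b) are standard consequences of the Levendorskii--Soibelman relations combined with the PBW theorem for the classical $U(\mathfrak{n}^+)$, all of which are carried out in detail in \cite{CP:BOOK}.
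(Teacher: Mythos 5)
The paper offers no proof of this proposition at all: it is stated with the citation \cite{CP:BOOK} precisely because it is a standard fact (Lusztig; Chari--Pressley, Chapter 8) that the authors take as given. So there is no ``paper's own proof'' to compare against. Your reconstruction does follow the standard argument from the cited reference: part (a) is Lusztig's observation that along a \emph{reduced} word one never has to evaluate $T_i(E_i)$, which keeps $T_{i_1}\cdots T_{i_{k-1}}(E_{i_k})$ inside $V_+$; part (b) is the Levendorskii--Soibelman straightening for spanning plus a weight-space dimension count (via the integral form, so it is valid even when $q$ is a root of unity, which is the regime of this paper) for linear independence. The one place worth tightening is your phrase ``$V_+ = \bigcap_i T_i^{-1}(V_+)[\text{appropriate term}]$'': the precise statement used in the literature is that for each $i$ the subalgebra $V_+ \cap T_i(V_+)$ is the subalgebra generated by the $E_j$ with $j\neq i$ together with $T_i(E_j)$, and the induction along the reduced word shows each intermediate element lands in such an intersection. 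As a plan this is correct and is exactly what the cited reference carries out.
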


\subsection{Unrolled restricted and small quantum groups} 

%For each $i=1.\dots,n$ let $r_i$ be the order of $\qi^2$. For $\b\in\Dep$ let $\rb=r_i$ if $\b$ is in the same Weyl group orbit as $\a_i$. The {\em restricted} unrolled quantum group is the quotient $\UHr$ of $\UHqg$ by the two-sided ideal generated by $E_{\b}^{\rb}$ and $F_{\b}^{\rb}$ for each $\b\in\Dep$. Note that this ideal is a Hopf ideal, this is because $E_i\ot K_i$ and $1\ot E_i$ $\qi^2$-commute. 

The {\em restricted} unrolled quantum group is the quotient $\UHr$ of $\UHqg$ by the two-sided ideal generated by $E_{\b}^{r'}$ and $F_{\b}^{r'}$ for each $\b\in\Dep$. Here $r'=r$ if $r$ is odd and $r'=r/2$ if $r$ is even. Note that this ideal is a Hopf ideal, this is because $E_i\ot K_i$ and $1\ot E_i$ $\qi^2$-commute for all $i$ (in the simply-laced case). The subalgebra of $\UHr$ generated by $E_i,F_i,K_i^{\pm 1}$ is denoted $\Ur$ and simply called the restricted quantum group. The {\em small quantum group }$\uqg$ is the quotient of $\Ur$ by setting $K_i^r=1$ for all $i=1,\dots, n$. The {\em small Borel} is the subalgebra $\uqb$ of $\uqg$ generated by $K_i^{\pm 1}, E_i$ for $i=1,\dots,n$.
\medskip

\begin{proposition}
    The set $$\{K_{\c}E^IF^J \ | \ \c\in L_R \text{ and } I,J:\Dep\to\{0,\dots,r'-1\}\}$$ forms a basis of $\uqg$.
\end{proposition}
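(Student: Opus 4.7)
The plan is a three-stage reduction: start from the classical PBW basis for $\Uqg$, descend to the restricted algebra $\Ur$ by killing the $r'$-th powers of root vectors, and finally quotient by $K_i^r - 1$ to reach $\uqg$. Throughout I work in the simply-laced setting of the preceding subsection.

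First, combining the previous Proposition \ref{lemma: PBW belongs to V+} (bases of $V^\pm$) with the standard triangular decomposition $\Uqg \cong V^- \otimes \C[K_1^{\pm 1}, \dots, K_n^{\pm 1}] \otimes V^+$ gives that $\{K_\c E^I F^J \,:\, \c \in L_R,\ I, J : \Dep \to \N_{\geq 0}\}$ is a basis of $\Uqg$. This step is classical (Lusztig; Chari--Pressley).

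Second, to pass to $\Ur$ I would show that the two-sided ideal $\mathcal{J} \subset \Uqg$ generated by $\{E_\b^{r'}, F_\b^{r'}\}_{\b \in \Dep}$ is spanned, inside the PBW basis above, by exactly those monomials $K_\c E^I F^J$ having some $I(\b) \geq r'$ or some $J(\b) \geq r'$. Spanning in this form is immediate, since any such monomial already contains $E_\b^{r'}$ or $F_\b^{r'}$ as a contiguous subword in PBW-normal order. The reverse inclusion is the main point: the cleanest route is the Levendorskii--Soibelman filtration on $\Uqg$, under which the associated graded becomes $q$-commutative on the root vectors and trivially commutative on the $K_\c$. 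In the graded algebra, multiplying any PBW monomial by $E_\b^{r'}$ or $F_\b^{r'}$ on either side only shifts one exponent by $r'$, so the ``bad-exponent'' condition is preserved; lifting back to $\Uqg$ via the filtration gives linear independence of the claimed basis modulo $\mathcal{J}$. This identifies a basis of $\Ur$ as $\{K_\c E^I F^J\}$ with $\c \in L_R$ and $I, J$ valued in $\{0, \dots, r'-1\}$.

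Third, to descend from $\Ur$ to $\uqg$ one further quotients by the ideal generated by $\{K_i^r - 1\}_{i=1}^{n}$. This ideal lies in the commutative group subalgebra $\C[K_1^{\pm 1}, \dots, K_n^{\pm 1}] \subset \Ur$ and merely identifies $K_\c$ with $K_{\c + r\mu}$ for every $\mu \in L_R$. The basis for $\Ur$ therefore descends directly, with $\c$ now running over a set of coset representatives of $L_R / r L_R$, which is how the $\c \in L_R$ in the displayed set should be read.

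The main obstacle is the second step, the Levendorskii--Soibelman analysis: one must verify that the $q$-commutators among the root vectors $E_\b$ (and similarly among the $F_\b$), for the reduced decomposition of $w_0$ fixed earlier, are compatible with the filtration so that multiplication by an $r'$-th power shifts exponents as claimed. The simply-laced assumption keeps $\qi = q$ throughout and removes a layer of bookkeeping, but this computation is the only substantive technical input beyond the classical PBW theorem.
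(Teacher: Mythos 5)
The paper states this proposition without any proof (it is taken as a standard fact), so there is no argument in the text to compare against; I can only assess your proposal on its own merits. Stages one and three are fine, and you are right that once $K_i^r=1$ is imposed the index $\c$ should run over $L_R/rL_R$. Stage two, however, contains a genuine gap.

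You argue that, because in the associated graded algebra multiplication by $E_\b^{r'}$ or $F_\b^{r'}$ on either side merely shifts one exponent by $r'$, you can ``lift back through the filtration'' to conclude that $\mathcal{J}$ is spanned by bad monomials. But what an argument of this type actually delivers is that the ideal $\mathcal{J}_{\gr}$ generated by the \emph{symbols} $\overline{E_\b^{r'}},\overline{F_\b^{r'}}$ inside $\gr\Uqg$ is spanned by symbols of bad monomials. It does not automatically give $\gr(\mathcal{J})=\mathcal{J}_{\gr}$: cancellation of top-order terms can put lower-order, ``good'' elements into $\mathcal{J}$. Concretely, for $\gg=\sl_2$ one has
\[
FE^{r'}-E^{r'}F \;=\; -\frac{1}{q-q^{-1}}\Bigl(\Bigl(\textstyle\sum_{m=0}^{r'-1}q^{-2m}\Bigr)K-\Bigl(\textstyle\sum_{m=0}^{r'-1}q^{2m}\Bigr)K^{-1}\Bigr)E^{r'-1},
\]
and already inside the Borel for $\gg=\sl_3$,
\[
E_1^{r'}E_2 \;=\; q^{-r'}\,E_2E_1^{r'}\;+\;[r']_q\,E_{12}E_1^{r'-1}.
\]
Both right-hand sides belong to $\mathcal{J}$, consist of good monomials, and would be nonzero if $q$ were generic — so your claimed linear independence would fail (in fact $\mathcal{J}$ would then be the unit ideal). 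What saves the day is precisely the root-of-unity hypothesis $q^{2r'}=1$, which forces $\sum_m q^{\pm 2m}=0$ and $[r']_q=0$. These are the first cases of the De Concini--Kac theorem that at such roots of unity the elements $E_\b^{r'}$, $F_\b^{r'}$ (and $K_i^r$) are central in $\Uqg$, and already normal in the Borel. That (q-)centrality, not the filtration by itself, is what makes the span of bad monomials a two-sided ideal, and hence equal to $\mathcal{J}$. So the LS/De Concini--Kac filtration is indeed the right tool for identifying $\gr\Uqg$ as a quantum affine space, but you must supplement it with a separate lemma — the centrality of the $r'$-th powers of the root vectors — which is a finite computation relying essentially on $q^{2r'}=1$ and which your outline silently assumes.
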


\subsection{Weight modules}
\def\laa{[\la]}
\def\muu{[\mu]}

A {\em weight module} is a $\UHr$-module for which $\Uh$ acts diagonalizably and $K_i$ acts as $\qi^{H_i}$ for every $i$. Let $V$ be a weight module. For each $\la\in\hh^*$ let $$V(\la)=\{v\in V \ | \ Hv=\la(H)v, H\in \hh\}.$$ Then $E_i(V(\la))\sb V(\la+\a_i)$ and $F_i(V(\la))\sb V(\la-\a_i)$. Thus, for any $\la\in\hh^*$, $\oplus_{\c\in L_R}V(\la+\c)$ is a submodule of $V$. Let $G=\hh^*/L_R$ and let $\CC_{\laa}$ be the category of weight modules whose weights are in the class of $\laa:=\la+L_R\in G$. Then the whole category $\CC$ of weight modules splits as a direct sum $$\CC=\bigoplus_{\laa\in G}\CC_{\laa}.$$
Moreover, one has $\CC_{\laa}\ot\CC_{\muu}\sb \CC_{\laa+\muu}$ for each $\laa,\muu\in G$.
The neutral component consists of modules with weights in $L_R$. Modules in $\CC_{[0]}$ are modules over the small quantum group $\uqg$ (where $K_i^r=1$ for all $i$) with a choice of logarithm for the weights (the weights of $\uqg$ are only defined mod $r$). Thus, there is a forgetful functor $F:\CC_{[0]}\to \Rep(\uqg)$. %This functor is essentially surjective if $gcd(r,\det(A))=1$, meaning, every object of $\Rep(\uqg)$ is isomorphic to an object in the image of $F$.

\subsection{Verma modules} For each $\la\in \hh^*$, let $V_{\la}$ be the (finite dimensional) $\ov{U}_q(\gg)$-module generated by a highest weight vector $v_0$ of highest weight $\la$. In other words $V_{\la}=\ov{U}_q(\gg)\ot_{U^+}\C$ where $\C$ is a $U^+$-module via the map $\xi:U^+\to\C$ defined by $\xi(E_i)=0$ and $\xi(K_i)=q^{\la(H_i)}$ for every $i$. Here $U^+$ denotes the subalgebra of $\ov{U}_q(\gg)$ generated by the $K_i^{\pm 1}, E_i$ for $i=1,\dots,n$. Then $V_{\la}$ is a weight $\UHr$-module in an obvious way. It has a basis $v_A:=F^Av_0$ for $A:\Dep\to \{0,\dots, r'-1\}$, $v_A$ has weight $\la-d(A)$.
 \medskip

We call $\la$ {\em typical} if $q^{2( \la+\rho,\b)-m(\b,\b)}\neq 1$ for all $\b\in\Dep$ and $m=1,\dots,r'-1$.

\begin{proposition}(\cite{GP:trace-projective-quantum-groups, Rupert:unrolled})
    For typical $\la$, $V_{\la}$ is simple and projective in $\CC$.
\end{proposition}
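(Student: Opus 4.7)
The plan is to use the quantum Shapovalov form on $V_\la$ as the main technical tool, from which both simplicity and projectivity will follow.

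I would first equip $V_\la$ with the natural contragredient bilinear form $\langle -,-\rangle \colon V_\la \otimes V_\la \to \C$, characterized by $\langle v_0, v_0\rangle = 1$ together with the adjunction $\langle Ex, y\rangle = \langle x, Fy\rangle$ (up to the appropriate $K$-twist) with respect to the $\UHr$-action. This form is weight-orthogonal, and its radical is exactly the unique maximal proper submodule of $V_\la$; simplicity of $V_\la$ thus reduces to non-degeneracy of $\langle -,-\rangle$ on each finite-dimensional weight space $V_\la[\la-\mu]$, $\mu \in L_R^+$. I would then invoke the quantum Kac--Kazhdan formula, which factors the Shapovalov determinant on this weight space as a product over $\b \in \Dep$ and positive integers $m$ (in a range determined by the truncation $F_\b^{r'} = 0$, namely $1 \le m \le r'-1$) of quantum numbers vanishing exactly when $q^{2(\la+\rho,\b) - m(\b,\b)} = 1$. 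The typicality assumption forbids precisely these vanishings over this range, so the determinant is nonzero, the form is non-degenerate, and $V_\la$ is simple.

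For projectivity, the non-degenerate Shapovalov form yields a self-duality $V_\la \cong V_\la^*$ in $\CC$. Since $\uqg$ is a finite-dimensional Hopf algebra and therefore Frobenius, projective and injective objects coincide in $\Rep(\uqg)$; a self-dual simple whose block in $\CC$ contains no other simples---which, by typicality, follows from the absence of linkage between $\la$ and any other weight under the dot-action of the affine Weyl group at level $r'$---is forced to be its own projective cover and hence projective. Projectivity then lifts to $\CC$ through the forgetful functor $\CC_{[0]} \to \Rep(\uqg)$.

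\textbf{Main obstacle.} The hardest step is the quantum Kac--Kazhdan factorization at a root of unity: one must correctly identify which integers $m$ contribute to the Shapovalov determinant in the truncated setting $F_\b^{r'} = 0$ and verify that this range matches the typicality interval $1 \le m \le r'-1$. The simply-laced assumption and the coprimality of $r$ with $\det(a_{ij})$ enter precisely here, to avoid accidental collisions among quantum numbers coming from different positive roots.
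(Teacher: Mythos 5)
The paper does not prove this proposition at all; it imports it as a citation to Geer--Patureau and Rupert, so there is no internal argument to compare yours against. Your simplicity half is in the spirit of those sources: simplicity of $V_\la$ is indeed equivalent to non-degeneracy of the contragredient/Shapovalov form, and the paper's typicality condition $q^{2(\la+\rho,\b)-m(\b,\b)}\neq 1$ for $\b\in\Dep$, $1\leq m\leq r'-1$ is exactly the non-vanishing of the truncated Shapovalov determinant on the finite-dimensional Verma, so your identification of the range of $m$ is correct.

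The projectivity half, however, has genuine gaps. First, the Shapovalov form implements an isomorphism of $V_\la$ with its \emph{contragredient} $V_\la^\omega$, the dual twisted by the Cartan anti-involution $E_i\leftrightarrow F_i$, $K_i\mapsto K_i$. This is not the categorical dual $V_\la^*$: the antipodal dual has negated weights, so $V_\la^*\in\CC_{[-\la]}$ while $V_\la\cong V_\la^\omega\in\CC_{[\la]}$, and they cannot be isomorphic for typical $\la$ in general. Your projectivity argument therefore starts from a false self-duality. Second, ``unique simple in its block'' does not by itself force projectivity; you also need $\Ext^1(V_\la,V_\la)=0$. That vanishing does follow because $V_\la$ is simultaneously a highest-weight (Verma) and, via the contragredient isomorphism, a lowest-weight (co-Verma) object, so both $\Ext^1(V_\la,-)$ and $\Ext^1(-,V_\la)$ vanish against it, but you must say this---and with the correct dual. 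Third, your proposed lift of projectivity along $F:\CC_{[0]}\to\Rep(\uqg)$ does not apply, since for generic typical $\la$ one has $V_\la\notin\CC_{[0]}$; you would need to work blockwise in $\CC_{[\la]}$ or argue directly over $\Ur$. A cleaner finish (and closer to the cited references) is a dimension count: $\dim V_\la=(r')^{|\Dep|}$ is the maximal possible dimension of an indecomposable highest-weight module in the block, so a simple of that dimension must equal its own projective cover.
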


\def\rev{\overrightarrow{\text{ev}}}

\def\rcoev{\overrightarrow{\text{coev}}}
\def\lev{\overleftarrow{\text{ev}}}

\def\lcoev{\overleftarrow{\text{coev}}}

\def\coev{\text{coev}}
\def\ev{\text{ev}}
\def\XX{\mathbb{X}}

\subsection{Pivotal structure} If $V$ is a weight $\UHr$-module, then so does $V^*$ with the action $x\cdot v'(v)=v'(S(x)v)$ for all $v\in V, v'\in V^*, x\in\UHr$. The weight spaces are $V^*(\mu)=V(-\mu)^*$, thus $V^*\in \CC_{-\laa}$ if $V\in\CC_{\laa}$. Left evaluations and coevaluations are the usual ones of vector spaces, but right evaluations and coevaluations are given by $$\rev_V(v\ot v')=v'(gv), \ \ \rcoev_V(1)=\sum v_i\ot g^{-1}v_i'$$
where $g=K_{2\rho}^{1-r'}$. Here $(v_i)$ is a basis of $V$ and $(v_i')$ is the dual basis. It follows that there is an isomorphism $j_V:V^{**}\to V$ such that $$j_V^{-1}(v)(v')=v'(gv).$$
For $V_{\la}$ one has 
\begin{align}
    \label{eq: pivotal on Verma}
    j_{V_{\la}}(v''_A)=q^{(r'-1)(\la-d(A),2\rho)}v_A
\end{align}

where $v''_A$ is the basis of $V_{\la}^{**}$ determined by $v''_A(v'_B)=\d_{A,B}$.

\def\HH{\mathcal{H}}\def\KK{\mathcal{K}}
\def\TT{\ov{\Theta}}

\def\qa{q}

\subsection{Braiding on weight modules}
\label{subs: braiding on weight} 
The category of weight $\UHr$-modules is braided with $c_{V,W}:V\ot W\to W\ot V$ defined by $$c_{V,W}(v\ot w)= \tau_{V,W}(\HH\TT(v\ot w))$$
where $$\TT=\sum_{I:\Dep\to\{0,\dots,r'-1\}} c_{I} \cdot E^I\ot F^I$$
with $c_I=\prod_{\a\in\Dep}\frac{(\qa-\qa^{-1})^{I(\a)}}{[I(\a)]_q!}\qa^{\frac{I(\a)(I(\a)-1)}{2}}$, and
$\HH$ denotes the operator $V\ot W\to V\ot W$ defined by $$\HH(v\ot w)=q^{(\la,\mu)}v\ot w$$ if $v\in V(\la)$ and $w\in W(\mu)$.
\medskip

\def\bk{\boldsymbol{k}}
\def\bl{\boldsymbol{l}}
\def\bc{\boldsymbol{c}}
\def\bd{\boldsymbol{d}}
\def\bx{\boldsymbol{x}}
\def\by{\boldsymbol{y}}
\begin{lemma}
\label{lemma: unrolled R-matrix restrict to ROSSO matrix in small quantum}
If $V,W\in \CC_{[0]}$ the operator $\HH$ acts on $V\ot W$ by multiplication by $$\ov{\KK}=\frac{1}{r^n}\sum_{\a,\b\in Q_r}q^{-(\a,\b)}K_{\a}\ot K_{\b}$$
where $Q_r=\{c_1\a_1+\dots +c_n\a_n \ | \ 0\leq c_i\leq r-1\}\sb L_R$ and $(\a_i,\a_j)=a_{ij}$.
\end{lemma}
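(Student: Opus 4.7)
The plan is to evaluate both operators on a weight vector $v\ot w$, with $v\in V(\la)$ and $w\in W(\mu)$, and check they produce the same scalar. Since $V,W\in \CC_{[0]}$, the weights $\la,\mu$ lie in $L_R$, and in the simply-laced case $\la(H_i)=(\la,\a_i)$; hence each generator $K_i$ acts on $v$ by $q^{(\la,\a_i)}$, and consequently $K_\a v = q^{(\la,\a)}v$ for every $\a=\sum_i a_i\a_i\in L_R$. Substituting into the definition of $\ov{\KK}$,
\[
\ov{\KK}(v\ot w) \;=\; \Bigl(\tfrac{1}{r^n}\sum_{\a,\b\in Q_r} q^{-(\a,\b)+(\la,\a)+(\mu,\b)}\Bigr)\,v\ot w,
\]
so the claim reduces to showing that the parenthesized scalar equals $q^{(\la,\mu)}$.

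The key step is to carry out the $\a$-sum first. Writing $\a=\sum_i a_i\a_i$ with $0\le a_i<r$, the exponent collects as $\sum_i a_i(\la-\b,\a_i)$, so
\[
\sum_{\a\in Q_r} q^{(\la-\b,\a)} \;=\; \prod_{i=1}^n \sum_{a_i=0}^{r-1} q^{a_i(\la-\b,\a_i)},
\]
each factor being $r$ if $(\la-\b,\a_i)\equiv 0\pmod r$ and $0$ otherwise. This is where the hypothesis $\gcd(r,\det(a_{ij}))=1$ enters: it guarantees that the Cartan matrix is invertible over $\Z/r\Z$, so the simultaneous congruences $(\la-\b,\a_i)\equiv 0\pmod r$ for $i=1,\dots,n$ force $\la-\b\in rL_R$. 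Thus exactly one $\b\in Q_r$ survives, namely the representative $\b_\la$ of $\la$ modulo $rL_R$, and the double sum collapses to $r^n q^{(\mu,\b_\la)}$.

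It remains to replace $\b_\la$ by $\la$ inside the exponent. Since $\la-\b_\la\in rL_R$ and the form is integer-valued on $L_R\t L_R$ (because $(\a_i,\a_j)=a_{ij}\in\Z$), we get $(\mu,\la-\b_\la)\in r\Z$, so $q^{(\mu,\b_\la)}=q^{(\mu,\la)}=q^{(\la,\mu)}$, which matches the action of $\HH$ and finishes the proof. The main obstacle is really just the Gaussian-sum orthogonality on $Q_r\cong (\Z/r\Z)^n$; everything else is bookkeeping, and the coprimality assumption on $r$ and $\det(a_{ij})$ is precisely what makes the Cartan pairing non-degenerate modulo $r$, letting the orthogonality pick out a single coset.
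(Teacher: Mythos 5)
Your proof is correct and follows essentially the same route as the paper's: evaluate both sides on a weight vector, reduce to a Gaussian-sum orthogonality over $Q_r \cong (\Z/r)^n$, and invoke invertibility of the Cartan matrix modulo $r$ to collapse the double sum. The only cosmetic differences are that you sum over $\a$ rather than $\b$ and apply orthogonality factor-by-factor before invoking invertibility of $A$ (the paper instead performs the change of variables $\bx = A\bl$ first, so the delta function appears directly in the $\bk$-variable), but the underlying idea is identical.
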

\begin{proof}
   Let $v\in V(\la)$ and $w\in W(\mu)$ where $\la=\sum_i c_i\a_i\in L_R$ and $\mu=\sum_i d_i\a_i\in L_R$. Then
   \begin{align*}
       \ov{\KK}(v\ot w)&=\frac{1}{r^n}\sum_{\a,\b\in Q_r}q^{-(\a,\b)+(\a,\la)+(\b,\mu)}(v\ot w)\\
       &=\frac{1}{r^n}\sum_{\bk,\bl \in (\Z/r)^n}q^{-\bk^tA\bl+\bk^tA\bc+\bd^tA\bl}(v\ot w)\\
       &=\frac{1}{r^n}\sum_{\bk}q^{\bk^tA\bc}\sum_{\bl}q^{(-\bk^t+\bd^t)A\bl}(v\ot w)\\
        &=\frac{1}{r^n}\sum_{\bk}q^{\bk^tA\bc}\sum_{\bx}q^{(-\bk^t+\bd^t)\bx}(v\ot w)\\
        &=\frac{1}{r^n}\sum_{\bk}q^{\bk^tA\bc} r^n\delta_{-\bk+\bd,0}(v\ot w)\\
        &=q^{\bd^tA\bc}(v\ot w)=q^{(\la,\mu)}(v\ot w)=\HH(v\ot w).
   \end{align*}
   Note that in the fourth equality we used our assumption that $A$ is invertible mod $r$ and in the fifth equality we used that $\sum_{\bx\in (\Z/r)^n}q^{\by^t\cdot\bx}=r^n\d_{\by,\boldsymbol{0}}$ since $q$ is a primitive $r$-th root of unity.
\end{proof}

\begin{corollary}
\label{cor: unrolled at deg 0 is small quantum}
    The forgetful functor $F:\CC_{[0]}\to \Rep(\uqg)$ is braided.
\end{corollary}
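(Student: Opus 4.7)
The plan is to identify the R-matrix controlling the braiding on $\CC_{[0]}$ with the standard R-matrix of $\uqg$, after which the corollary follows directly from Lemma \ref{lemma: unrolled R-matrix restrict to ROSSO matrix in small quantum}. Recall that the R-matrix of $\uqg$, in the Lusztig--Rosso form, is $R=\ov{\KK}\cdot\TT\in\uqg\ot\uqg$, where $\ov{\KK}$ is exactly the ``Cartan part'' appearing in the previous lemma and $\TT=\sum_I c_I\, E^I\ot F^I$ is the same truncated quasi-R-matrix used to define the braiding on $\CC$ in Subsection \ref{subs: braiding on weight}.

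First I would check that $F:\CC_{[0]}\to \Rep(\uqg)$ is a strict monoidal functor. A weight module $V\in\CC_{[0]}$ has all its weights in $L_R$, so every $K_i=\qi^{H_i}$ acts on the $\la$-weight space of $V$ by an $r$-th root of unity; in particular the relation $K_i^r=1$ defining $\uqg$ is automatically satisfied, and forgetting the $H_i$-action produces an honest $\uqg$-module. On morphisms and tensor products $F$ is literally the identity.

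Next I would note that $\TT$ is a finite sum whose summands $E^I\ot F^I$ lie in $\uqg\ot\uqg$, so $\TT$ acts on $V\ot W$ by the same operator in either category. Combining this with Lemma \ref{lemma: unrolled R-matrix restrict to ROSSO matrix in small quantum}, which identifies the action of $\HH$ on $V\ot W$ with that of $\ov\KK$ whenever $V,W\in\CC_{[0]}$, the braiding $\tau_{V,W}\circ \HH\circ \TT$ of $\CC$ coincides with $\tau_{F(V),F(W)}\circ R$, i.e.\ with the standard braiding of $\Rep(\uqg)$. Hence $F$ is braided.

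The only step that requires external input is the identification of the R-matrix of $\uqg$ as $\ov\KK\cdot\TT$, which is the classical Lusztig/Rosso formula for the quasitriangular structure on the small quantum group at a root of unity and can be cited from, e.g., \cite{CP:BOOK}. I do not expect any genuine obstacle beyond this bookkeeping.
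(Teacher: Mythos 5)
Your proposal is correct and takes essentially the same route the paper intends: the paper leaves the corollary without a displayed proof and instead notes immediately afterward that $\Rep(\uqg)$ is being equipped with the braiding $c_{V,W}=\tau\circ(\ov{\KK}\cdot\TT)$, so the corollary is exactly the observation that Lemma \ref{lemma: unrolled R-matrix restrict to ROSSO matrix in small quantum} identifies $\HH$ with $\ov{\KK}$ on $\CC_{[0]}$. Your extra care in verifying that $K_i^r=1$ holds automatically on $L_R$-weight modules and that $\TT$ is a common finite sum in $\uqg\ot\uqg$ is harmless bookkeeping the paper takes for granted.
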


Here we consider $\Rep(\uqg)$ with the braiding given by $c_{V,W}(v\ot w)=\tau_{V,W}(\ov{\KK}\cdot\TT(v\ot w)).$

%This will imply that if a tangle is colored by objects of $\CC_0$, then the tangle invariant is computed via a universal invariant living in the small quantum group.

\subsection{Ribbon structure} The category $\CC$ is ribbon \cite{GP:trace-projective-quantum-groups} in a way compatible with the above braiding and pivotal structure. The twist on a simple $V_{\la}$ is given by 
\begin{align}
\label{eq: ribbon twist on Verma}
    \theta_{V_{\la}}=q^{(\la,\la-(r'-1)2\rho)}\id_{V_{\la}}.
\end{align}

%With the coprime condition above, the small quantum group satisfies the Kauffman-Radford condition and hence is always ribbon. However, there is a small distinction to do in type A (that would not happen if we simply used $L_W$). Indeed, note that the determinant of a Cartan matrix is always even except for $\sl_{n+1}$ with even $n$. Thus, for all these types $r$ would be odd because of the coprime condition. Even $r$ can only be considered for $\sl_3, \sl_5, etc$. \\

\medskip

\subsection{Lemmas on PBW} We now prove some simple lemmas concerning the algebra structure on $\Uqg$.

\begin{lemma}
\label{lemma: multip 2 product of EI's}
    For any $I,J$ one can write $F^IF^J=\sum c_{A}(q)F^A$ for some $c_{A}(q)\in\Q(q)$ and $d(A)=d(I)+d(J)$.
\end{lemma}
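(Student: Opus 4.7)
\medskip

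\noindent\textbf{Proof plan for Lemma \ref{lemma: multip 2 product of EI's}.} The plan is to reduce the statement to two facts already available in the excerpt: that $V_-$ is a subalgebra of $\Uqg$ whose PBW monomials $\{F^A \mid A\colon \Dep\to\N_{\geq 0}\}$ form a basis (Proposition \ref{lemma: PBW belongs to V+}), and that $\Uqg$ is $L_R$-graded with $d(F_i)=-\a_i$ (hence $d(F^A)=-d(A)$, as observed right after the definition of $F^A$).

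First I would note that each $F^I$ lies in $V_-$, simply because it is a product of the generators $F_1,\dots,F_n$ of $V_-$ (more precisely, each root vector $F_\b$ lies in $V_-$ by Proposition \ref{lemma: PBW belongs to V+}). Since $V_-$ is a subalgebra, the product $F^IF^J$ belongs to $V_-$. By the PBW theorem invoked in Proposition \ref{lemma: PBW belongs to V+}, the set $\{F^A\}_A$ is a basis of $V_-$, so there exist uniquely determined scalars $c_A(q)\in\Q(q)$ with
\[
F^IF^J=\sum_A c_A(q)\,F^A.
\]

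Next I would read off the constraint on the index $A$ from the $L_R$-grading. The element $F^IF^J$ is homogeneous of $L_R$-degree
\[
d(F^IF^J)=d(F^I)+d(F^J)=-d(I)-d(J).
\]
On the other hand, each $F^A$ appearing in the expansion is homogeneous of $L_R$-degree $-d(A)$, and distinct values of $d(A)$ give linearly independent homogeneous components. Therefore $c_A(q)=0$ unless $-d(A)=-d(I)-d(J)$, i.e.\ $d(A)=d(I)+d(J)$, which is the claim.

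There is essentially no obstacle here; the entire content is PBW plus homogeneity. The only point one should be a little careful about is making sure that the grading $d(F^A)=-d(A)$ really is respected by the relations of $\Uqg$ (equivalently, that the defining relations of $\Uqg$ are $L_R$-homogeneous), but this was noted in the subsection introducing the PBW basis.
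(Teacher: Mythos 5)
Your argument is correct and is exactly what the paper has in mind: the paper's own proof simply says the lemma records that $V_-$ is $L_R$-graded, multiplication preserves the grading, and $F^I$ has degree $-d(I)$, all of which you spell out via PBW and homogeneity.
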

\begin{proof}
    This is obvious. This lemma only says that $V_-$ is graded by the root lattice and multiplication preserves the grading (and that $F^I$ has degree $-d(I)$).
\end{proof}

\begin{lemma}
\label{lemma: multip 1 non-simple root is product}
    If $I$ is not a simple root then $F^I=\sum c_{A,B}(q)F^AF^B$ where $c_{A,B}(q)\in \Q(q)$ and $d(A)+d(B)=d(I)$ with $0< d(A), d(B)< d(I)$. 
\end{lemma}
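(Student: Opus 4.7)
The plan is to split into two cases based on the size of $I$. I read the hypothesis ``$I$ is not a simple root'' as saying that $d(I)\in L_R^+$ is not a simple root of $\gg$; equivalently, $F^I$ is not one of the Chevalley generators $F_1,\dots,F_n$.

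\emph{Case 1: $|I|:=\sum_{\b\in\Dep}I(\b)\geq 2$.} Here $F^I=F_{\b_N}^{I(\b_N)}\cdots F_{\b_1}^{I(\b_1)}$ is already an ordered product of at least two PBW root vectors. I would let $\b^*$ be the largest root (in the PBW ordering) with $I(\b^*)\geq 1$, set $A$ to be the indicator of $\b^*$, and put $B=I-A$. Then $F^I=F^A F^B$ holds directly, with the right-hand side still in PBW order and no commutation required; both $d(A)=\b^*$ and $d(B)=d(I)-\b^*$ are nonzero elements of $L_R^+$ summing to $d(I)$, so both are strictly less than $d(I)$ in the order on $L_R$.

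\emph{Case 2: $|I|=1$.} Then $I$ is the indicator of a positive root $\b$, so $F^I=F_{\b}$, and by hypothesis $\b$ is non-simple, i.e.\ of height $h\geq 2$. By Proposition~\ref{lemma: PBW belongs to V+} we have $F_{\b}\in V_-$, and $V_-$ is by definition the subalgebra of $\Uqg$ generated by $F_1,\dots,F_n$; hence $F_{\b}$ is a $\Q(q)$-polynomial in the Chevalley generators. The $L_R$-grading (each $F_i$ has degree $-\a_i$) forces every monomial in this polynomial to contain exactly $h\geq 2$ factors. I would then split each monomial $F_{j_1}F_{j_2}\cdots F_{j_h}$ as $F_{j_1}\cdot(F_{j_2}\cdots F_{j_h})$: the first factor equals the PBW element $F^{A}$ with $A$ the indicator of $\a_{j_1}$, and the second factor expands as $\sum_{B} c_B(q)F^B$ by iterating Lemma~\ref{lemma: multip 2 product of EI's}, with every resulting $F^B$ satisfying $d(B)=\b-\a_{j_1}$. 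Since $\a_{j_1}$ and $\b-\a_{j_1}$ are both positive and strictly less than $\b=d(I)$, summing over all monomials yields the desired expression $F_{\b}=\sum c_{A,B}(q)F^AF^B$.

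The only point requiring more than unwinding definitions is the grading argument in Case 2, where the bound $h\geq 2$ is what guarantees that every monomial in the Chevalley expansion of $F_{\b}$ admits a nontrivial split into two positive-degree pieces. Everything else is bookkeeping using Proposition~\ref{lemma: PBW belongs to V+}, the definition of $V_-$ as generated by the simple $F_i$, and Lemma~\ref{lemma: multip 2 product of EI's}.
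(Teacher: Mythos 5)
Your proof is correct, and in the key case ($|I|=1$, so $F^I=F_\b$ with $\b$ a non-simple positive root) it takes a genuinely different route from the paper.  The paper's proof works directly with the explicit formulas for Lusztig's braid operators $T_i$: it observes that each $T_i(F_j)$ with $a_{ij}<0$ is already a sum of products of Chevalley generators of strictly smaller $L_R$-degree, and then propagates this through $F_{\b_k}=T_{i_1}\cdots T_{i_{k-1}}(F_{i_k})$, arguing that some $T_{i_l}$ must act nontrivially along the way since $\b_k$ is non-simple.  Your argument instead appeals only to the structural statement $F_\b\in V_-$ (Proposition \ref{lemma: PBW belongs to V+}), writes $F_\b$ as a polynomial in the Chevalley generators, and uses the $L_R$-grading to force every monomial to have height $h\geq 2$ factors; splitting off a single $F_{j_1}$ from each monomial and collecting the rest via Lemma \ref{lemma: multip 2 product of EI's} then gives the decomposition.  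This is cleaner and avoids any bookkeeping with the braid-group formulas, though of course $F_\b\in V_-$ is itself established via that action, so the two arguments ultimately rest on the same input — yours is just at one level of abstraction higher.  Your Case 1 ($|I|\geq 2$) is the same one-line reduction the paper makes at the outset, merely spelled out: peel off the leading PBW factor.  One small point worth being explicit about: when you identify the Chevalley generator $F_{j_1}$ with the PBW element $F^A$ for $A$ the indicator of $\a_{j_1}$, you are using the standard fact that the Lusztig root vector attached to a simple root equals the corresponding Chevalley generator; this holds but is not trivially a tautology.
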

\begin{proof}
Clearly, we only need to show this when $F^I=F_{\b_k}$ and $\b_k$ is not a simple root. Note first that from the above formula of $T_i(F_j)$ one sees that $a_{ij}=0$ implies $T_i(F_j)=F_j$ and $a_{ij}<0$ implies $T_i(F_j)$ (whose degree is $\a_j-a_{ij}\a_i$) is a sum of products of $F_l$'s of smaller $L_R$-degree. Indeed, note that if $0<k\leq -a_{ij}$ then $0<k\a_i,(-a_{ij}-k)\a_i+\a_j<\a_j-a_{ij}\a_i$ and if $k=0$, $0<\a_j,-a_{ij}\a_i<\a_j-a_{ij}\a_i$.
Now, if $\b_{k}$ is a non-simple positive root, then there is an index $i_l$ (in the decomposition $w_0=s_{i_1}\dots s_{i_N}$) with $1\leq l\leq k-1$ such that $a_{i_l,i_{l+1}}<0$. Since the $T_i$'s are algebra automorphisms, it follows from the previous discussion that $F_{\b_k}=T_{i_1}\dots T_{i_{k-1}}(F_{i_k})$ is a sum of products of $F_i$'s of smaller $L_R$-degree. 
\medskip
\end{proof}

Denote $[q^nK_i]=\frac{q^nK_i-q^{-n}K_i^{-1}}{q-q^{-1}}$. Note that $K_iF^A=q^{-(d(A),\a_i)}F^AK_i$ so that $[q^nK_i]F^A=F^A[q^{n-(d(A),\a_i)}K_i]$.

\begin{lemma}
\label{lemma: commutation for simple root}
 For every $i$ and $A:\Dep\to \{0,\dots,r'-1\}$ we have   $$E_iF^A=F^AE_i+\sum_Bc_B(q) F^B[q^{n_B}K_i]$$
 for some $c_B(q)\in \Q(q)$ and $n_B\in\Z$.
\end{lemma}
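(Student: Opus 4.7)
The plan is to induct on $d(A)\in L_R^+$ with respect to the partial order $>$. The base case $d(A)=0$ is trivial since $F^A=1$, and if $d(A)=\alpha_j$ is a simple root then necessarily $F^A=F_j$, so the defining relation $[E_i,F_j]=\delta_{ij}[K_i]$ already gives the desired form.

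For the inductive step, assume the claim holds for all $A'$ with $d(A')<d(A)$, and distinguish two subcases depending on whether $|A|:=\sum_\beta A(\beta)\geq 2$ or $|A|=1$ with $d(A)$ non-simple. If $|A|\geq 2$, pick $\beta_k$ with $A(\beta_k)>0$ and write $F^A=F_{\beta_k}\cdot F^{A'}$, where $F^{A'}$ is the PBW monomial with $A'(\beta_k)=A(\beta_k)-1$ and $A'(\beta)=A(\beta)$ otherwise; both $\beta_k$ and $d(A')$ are strictly smaller than $d(A)$, so the induction hypothesis applies to $F_{\beta_k}$ and $F^{A'}$. Substituting $E_iF_{\beta_k}=F_{\beta_k}E_i+\sum c_{B_1}F^{B_1}[q^{n_{B_1}}K_i]$ followed by $E_iF^{A'}=F^{A'}E_i+\sum c_{B_2}F^{B_2}[q^{n_{B_2}}K_i]$ into $E_iF^A=E_iF_{\beta_k}F^{A'}$, and using the commutation $[q^nK_i]F^{A'}=F^{A'}[q^{n-(d(A'),\alpha_i)}K_i]$ recalled before the lemma to move the $K_i$-terms to the right, yields $F^AE_i$ plus error terms of the shape $F_{\beta_k}F^{B_2}[q^{*}K_i]$ and $F^{B_1}F^{A'}[q^{*}K_i]$. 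By Lemma \ref{lemma: multip 2 product of EI's} each of these products re-expands as a $\Q(q)$-linear combination of PBW monomials of the correct root-lattice degree, delivering the required form.

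If instead $|A|=1$ with $d(A)=\beta$ non-simple, then $F^A=F_\beta$ and Lemma \ref{lemma: multip 1 non-simple root is product} writes $F_\beta=\sum c_{B,C}F^BF^C$ with $0<d(B),d(C)<\beta$, so the induction hypothesis applies to both $F^B$ and $F^C$ and the same two-step substitution, followed by one application of Lemma \ref{lemma: multip 2 product of EI's}, closes the induction. This last case is the main subtle point, and it is what forces us to induct on $d(A)$ in the root-lattice order rather than on $|A|$: splitting $F_\beta$ via Lemma \ref{lemma: multip 1 non-simple root is product} produces factors $F^B,F^C$ whose total PBW lengths may exceed $|A|=1$, but whose root-lattice degrees are always strictly smaller than $d(A)$.
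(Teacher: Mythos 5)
Your proof is correct and follows the same essential strategy as the paper: induct on $d(A)$ in the root-lattice order, write $F^A$ as a $\Q(q)$-linear combination of products $F^B F^C$ of strictly smaller $L_R$-degree, apply the already-established commutation to each factor, move the $[q^n K_i]$ term to the right using the relation recalled before the lemma, and re-expand the resulting products via Lemma~\ref{lemma: multip 2 product of EI's}. The only difference is cosmetic: the paper absorbs your two inductive sub-cases into a single application of Lemma~\ref{lemma: multip 1 non-simple root is product}, whose proof already observes that the claim is trivial when $|I|\ge 2$ (and in your $|A|\ge 2$ sub-case you should specify that $\beta_k$ is the leftmost nonzero entry in the PBW ordering, so that the factorization $F^A=F_{\beta_k}F^{A'}$ actually holds).
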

\begin{proof}
    If $A$ is a simple root this is obvious. If not, we can use Lemma \ref{lemma: multip 1 non-simple root is product} to write $F^A=\sum c_{B,C}(q)F^BF^C$ with $d(B),d(C)<d(A)$. By induction
    \begin{align*}
        E_iF^A&=\sum c(q)E_iF^BF^C\\
        &=\sum c(q) (F^BE_i+\sum_Dd(q)F^D[q^{n_D}K_i])F^C\\
        &=\sum c(q)F^BE_iF^C+\sum f(q)F^DF^C[q^{n_D-(d(C),\a_i)}K_i]\\
        &=\sum c(q)F^B(F^CE_i+\sum e(q)F^I[q^{n_I}K_i])+\sum f(q)F^DF^C[q^{n_D-(d(C),\a_i)}K_i]\\
        &=F^AE_i+\sum g(q)F^BF^I[q^{n_I}K_i]+\sum f(q)F^DF^C[q^{n_D-(d(C),\a_i)}K_i].
    \end{align*}
    The proof is finished by writing each $F^BF^I, F^DF^C$ as a $\Q(q)$-linear combination of $F^J$'s (Lemma \ref{lemma: multip 2 product of EI's}).
\end{proof}

\subsection{Lemmas on Verma modules} The Verma $V_{\la}$ has basis the vectors $v_A:=F^Av_0$ where $A:\Dep\to \{0,\dots,r'-1\}$. We will consider $V^*_{\la}$ with the dual basis $v'_A$, that is, $\lb v'_A,v_B\rb=\d_{A,B}$ and $X=V_{\la}\ot V_{\la}^*$ with the tensor product basis. We refer to these bases (and tensor products of these) as standard bases. We consider $q^{\la(H_1)},\dots, q^{\la(H_n)}$ as independent variables. For simplicity, we denote $\la_i=\la(H_i)$.

\begin{lemma}
    In the standard basis of $V_{\la}$, $E_i$ acts by a matrix with coefficients of the form $a(q)q^{\la_i}+b(q)q^{-\la_i}$ with $a(q),b(q)\in \Q(q)$.  In the dual basis of $V_{\la}^*$, $E_i$ acts by a matrix with coefficients of the form $c(q)+d(q)q^{-2\la_i}$, $c(q),d(q)\in \Q(q)$.
\end{lemma}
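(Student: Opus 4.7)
The plan is to reduce both statements to a direct computation using the commutation lemma from the previous subsection (Lemma \ref{lemma: commutation for simple root}), together with the explicit action of the Cartan generators $K_i$ on weight vectors and the antipode formula $S(E_i) = -E_i K_i^{-1}$.

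For the first statement, I would start from $E_i v_A = E_i F^A v_0$ and apply Lemma \ref{lemma: commutation for simple root} to rewrite
\[
E_i F^A \;=\; F^A E_i \;+\; \sum_B c_B(q)\, F^B [q^{n_B} K_i].
\]
Applying this to the highest weight vector $v_0$, the first term vanishes since $E_i v_0 = 0$, and $K_i v_0 = q^{\lambda_i} v_0$, so that
\[
[q^{n_B} K_i] v_0 \;=\; \frac{q^{n_B+\lambda_i} - q^{-n_B-\lambda_i}}{q-q^{-1}}\, v_0.
\]
Since $F^B v_0 = v_B$, the coefficient of $v_B$ in $E_i v_A$ is precisely of the form $a(q) q^{\lambda_i} + b(q) q^{-\lambda_i}$ with $a(q), b(q) \in \mathbb{Q}(q)$, as claimed.

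For the second statement, I would use the antipode formula $S(E_i) = -E_i K_i^{-1}$ (standard for $\UHr$). The dual action gives
\[
(E_i \cdot v'_A)(v_B) \;=\; v'_A(S(E_i) v_B) \;=\; -v'_A(E_i K_i^{-1} v_B).
\]
Since $v_B$ has weight $\lambda - d(B)$, we have $K_i^{-1} v_B = q^{-\lambda_i + (d(B), \alpha_i)} v_B$, where the factor $q^{(d(B),\alpha_i)}$ is a fixed scalar in $\mathbb{Q}(q)$ independent of $\lambda$. Combining with the result of the first part, the entry $-v'_A(E_i K_i^{-1} v_B)$ becomes
\[
-q^{-\lambda_i + (d(B), \alpha_i)} \bigl(a_{BA}(q) q^{\lambda_i} + b_{BA}(q) q^{-\lambda_i}\bigr),
\]
which simplifies to an expression of the form $c(q) + d(q) q^{-2\lambda_i}$ with $c(q), d(q) \in \mathbb{Q}(q)$, as required.

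There is no real obstacle here: the statement is essentially a consequence of Lemma \ref{lemma: commutation for simple root} combined with the weight-vector computation, and the only thing to be careful about is tracking how the factor $q^{\lambda_i}$ from $K_i v_0$ survives intact (it is not a polynomial in $q$ since $\lambda_i$ is treated as an independent variable). The dual side is then a one-line consequence of the first, using only that $K_i^{-1}$ contributes a single power $q^{-\lambda_i}$ against the weight of $v_B$.
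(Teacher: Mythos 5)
Your proof is correct and follows essentially the same path as the paper: the first part is literally the same computation using Lemma \ref{lemma: commutation for simple root} together with $E_iv_0=0$ and $K_iv_0=q^{\la_i}v_0$, and the second part uses the same antipode formula $S(E_i)=-E_iK_i^{-1}$ and the diagonal action of $K_i^{-1}$ by a fixed power of $q$ times $q^{-\la_i}$. The only cosmetic difference is that you write out the entrywise product explicitly rather than phrasing it as a matrix transpose/product, but the argument and all the key ingredients are identical.
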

\begin{proof}
     Using $E_iv_0=0$, $K_iv_0=q^{\la_i}v_0$ and Lemma \ref{lemma: commutation for simple root} we find: 
    \begin{align*}
       E_iv_A&=E_iF^Av_0=F^AE_iv_0+\sum_Bc_B(q) F^B[q^{n_B}K_i]v_0\\
       &=\sum_B c_B(q)[n_B+\la_i]_qv_B=\sum_B c'_B(q)(q^{n_B+\la_i}-q^{-n_B-\la_i})v_B  \\
       &=\sum_B (a_B(q)q^{\la_i}+b_B(q)q^{-\la_i})v_B 
    \end{align*}
    This shows the first part of the lemma. Now, the matrix representing $E_i$ acting on $V_{\la}^*$ in the dual basis is the transpose of the matrix of $S(E_i)=-E_iK_i^{-1}$ acting on $V_{\la}$. But $K_i^{-1}$ is represented by a diagonal matrix with coefficients of the form $q^{m_B-\la_i}, m_B\in\Z$ hence multiplying this with the previous matrix of $E_i$, we get a matrix with coefficients of the desired form.
\end{proof}

In the following lemmas we consider $X=V_{\la}\ot V_{\la}^*$ as a $\uqg$-module.

\begin{lemma}
  In the standard basis of $X$,  $E_i$ acts by matrices whose coefficients have the form $c(q)+d(q)q^{-2\la_i}$ with $c(q),d(q)\in\Q(q)$. In the same basis of $X$, $F_i$ acts by a matrix with $\Q(q)$-coefficients.
\end{lemma}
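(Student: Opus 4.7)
The plan is to decompose the actions using the coproducts $\Delta(E_i)=E_i\ot K_i+1\ot E_i$ and $\Delta(F_i)=K_i^{-1}\ot F_i+F_i\ot 1$, and then combine the previous lemma with the fact that $K_i^{\pm 1}$ acts diagonally on the standard bases of $V_\la$ and $V_\la^*$ with eigenvalues of the form $f(q)q^{\pm\la_i}$ (where $f(q)$ is a monomial in $q$ coming from an integer shift of $\la_i$ by some $n_A\in\Z$).

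For the $E_i$ statement: the summand $1\ot E_i$ already has matrix entries of the form $c(q)+d(q)q^{-2\la_i}$ directly by the previous lemma. For the summand $E_i\ot K_i$, a matrix entry is the product of an entry of $E_i$ on $V_\la$ (of shape $a(q)q^{\la_i}+b(q)q^{-\la_i}$ by the previous lemma) with a diagonal entry of $K_i$ on $V_\la^*$ (of shape $f(q)q^{-\la_i}$, since $V_\la^*$ has weights $-\la+\c$). The product collapses to $a(q)f(q)+b(q)f(q)q^{-2\la_i}$, which is again of the required form. Summing the two contributions preserves the shape $c(q)+d(q)q^{-2\la_i}$.

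For the $F_i$ statement, the first preliminary step is to observe that, in the standard basis of $V_\la$, $F_i$ acts with pure $\Q(q)$-entries (with no $\la$-dependence at all). This is immediate from $F_iv_A=(F_iF^A)v_0$ together with Lemma \ref{lemma: multip 2 product of EI's}, which writes $F_iF^A$ as a $\Q(q)$-linear combination of PBW monomials $F^B$. Using $S(F_i)=-K_iF_i$ and the definition of the dual action $(F_i\cdot v_A')(v_C)=v_A'(S(F_i)v_C)$, one then sees that in the dual basis of $V_\la^*$ the matrix entries of $F_i$ are of the form $h(q)q^{\la_i}$: the $F_i$ part contributes $\Q(q)$, and the $K_i$ part contributes a single factor of shape $g(q)q^{\la_i}$ since $K_i$ is diagonal in the weight basis.

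With these two preliminary facts in hand, the entries of $K_i^{-1}\ot F_i$ on $X$ have the form $(g(q)q^{-\la_i})\cdot(h(q)q^{\la_i})=g(q)h(q)\in\Q(q)$, and the entries of $F_i\ot 1$ are in $\Q(q)$ trivially; summing gives entries in $\Q(q)$, as claimed. The only point requiring care is the dual-space bookkeeping for $F_i$ via $S(F_i)=-K_iF_i$, and I expect that to be the main (mild) obstacle; the rest is just tracking how the two exponentials of $\la_i$ cancel in $K_i^{-1}\ot F_i$.
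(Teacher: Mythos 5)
Your proof is correct and follows essentially the same route as the paper's: decompose via $\De(E_i)=E_i\ot K_i+1\ot E_i$ and $\De(F_i)=K_i^{-1}\ot F_i+F_i\ot 1$, use the previous lemma for the $E_i$ entries together with the diagonal $K_i$ eigenvalues, and for $F_i$ observe that the $q^{-\la_i}$ from $K_i^{-1}$ acting on $V_\la$ cancels against the $q^{\la_i}$ arising from $S(F_i)=-K_iF_i$ on the dual factor. You spell out the intermediate step that $F_i$ acts with pure $\Q(q)$-entries on $V_\la$ (via Lemma \ref{lemma: multip 2 product of EI's}), which the paper leaves implicit, but the argument is the same.
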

\begin{proof}

    Indeed, $K_iv'_B=q^{-\la_i+m_B}v'_B$ for some $m_B\in\Z$. Hence the matrix representing the action of $E_i\ot K_i$ on $X$ has coefficients of the form $$(a(q)q^{\la_i}+b(q)q^{-\la_i})q^{-\la_i+m_B}=c(q)+d(q)q^{-2\la_i}$$
    for some $c(q),d(q)\in \Q(q)$. Now, by the previous lemma, $E_i$ acts on $V_{\la}^*$ by a matrix of the same form, hence so does the matrix of $1\ot E_i$ on $X$. Thus, $E_i\ot K_i+1\ot E_i$ acts by such a matrix, and this is exactly the matrix of the action of $E_i$ on $X$. The second assertion follows from $\De(F_i)=K_i^{-1}\ot F_i+F_i\ot 1$ and $S(F_i)=-K_iF_i$: the $q^{-\la_i}$'s of the action of $K_i^{-1}$ on $V_{\la}$ cancel with the $q^{\la_i}$'s of the action of $-K_iF_i$ on $V_{\la}$, hence only $\Q(q)$-coefficients appear.
\end{proof}

\begin{proposition}
\label{lemma: ESSENTIAL LEMMA}
    Let $z\in\uqg$ and consider its action on the module $X=V_{\la}\ot V_{\la}^*$. Then, in the standard basis of $X$, each coefficient of the matrix representing $z$ is a polynomial in $\Q(q)[q^{-2\la_1 },\dots, q^{-2\la_n}]$ where the power of each $q^{-2\la_i}$ appearing in some monomial is bounded above by $(r'-1)k_i$, where $k_i$ is the coefficient of $\a_i$ in $2\rho$.
\end{proposition}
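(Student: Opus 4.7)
The plan is to reduce the claim to the PBW basis of $\uqg$ and then control the contribution of each factor separately. Every $z\in\uqg$ is a $\Q(q)$-linear combination of elements of the form $K_\c E^I F^J$ with $\c\in L_R$ and $I,J\colon \Dep\to\{0,\dots,r'-1\}$, so by $\Q(q)$-linearity it suffices to bound the entries of the matrix representing a single such basis element on $X=V_\la\otimes V_\la^*$.

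Next, I would check that $K_\c$ and $F^J$ contribute only $\Q(q)$-coefficients to this matrix. For $K_\c$, the weight of $v_A\otimes v'_B$ is $d(B)-d(A)\in L_R$, so $K_\c$ acts diagonally by $q^{(\c,\, d(B)-d(A))}\in\Q(q)$, with no $q^{\pm\la_i}$ present. For $F^J$, Proposition \ref{lemma: PBW belongs to V+} gives $F_\b\in V_-$ for every positive root $\b$, so $F^J$ is a $\Q(q)$-polynomial in the simple generators $F_1,\dots,F_n$; combined with the previous lemma (stating that each $F_i$ acts on $X$ by a matrix with $\Q(q)$-entries), this shows $F^J$ also acts by a $\Q(q)$-valued matrix.

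The heart of the argument is the treatment of $E^I$, which uses the $L_R$-grading on $V_+$. By Proposition \ref{lemma: PBW belongs to V+}, each $E_\b$ lies in $V_+$, and it is $L_R$-homogeneous of degree $\b$. Since $V_+$ is generated by $E_1,\dots,E_n$ with $E_i$ of degree $\a_i$, we may expand $E_\b$ as a $\Q(q)$-linear combination of monomials $E_{i_1}\cdots E_{i_m}$ each of total $L_R$-degree $\b=\sum_i b_i(\b)\a_i$; consequently every such monomial contains exactly $b_i(\b)$ factors of $E_i$. The previous lemma says that each simple $E_i$ acts on $X$ by a matrix with entries of the form $c(q)+d(q)q^{-2\la_i}$, with no $q^{-2\la_j}$ for $j\neq i$. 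Multiplying out a monomial therefore yields matrix entries in $\Q(q)[q^{-2\la_1},\dots,q^{-2\la_n}]$ whose degree in $q^{-2\la_i}$ is at most $b_i(\b)$, and this bound is preserved under $\Q(q)$-linear combinations.

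To conclude, for $E^I=E_{\b_N}^{I(\b_N)}\cdots E_{\b_1}^{I(\b_1)}$ the entries of the resulting matrix on $X$ have degree in $q^{-2\la_i}$ at most $\sum_{\b\in\Dep} I(\b)\, b_i(\b)\leq (r'-1)\sum_{\b\in\Dep}b_i(\b)=(r'-1)k_i$, where we used $I(\b)\leq r'-1$ and $\sum_{\b\in\Dep}\b=2\rho=\sum_i k_i\a_i$. Multiplying on the left by the $\Q(q)$-valued matrix of $K_\c$ and on the right by that of $F^J$ does not raise the degree in any $q^{-2\la_i}$, so the bound persists for $K_\c E^I F^J$, and hence for arbitrary $z\in\uqg$. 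The only conceptual subtlety, and where I would expect to have to be careful, is the observation that $L_R$-homogeneity forces every monomial expansion of $E_\b$ to contain exactly $b_i(\b)$ copies of $E_i$; once this is in place the estimate is a straightforward product of the per-generator bounds given by the previous lemma.
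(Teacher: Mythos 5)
Your proof is correct and follows essentially the same route as the paper's: reduce to PBW basis elements, observe that $K_\c$ and $F^J$ contribute only $\Q(q)$-coefficients, use $L_R$-homogeneity to expand $E^I$ into monomials in the simple $E_i$'s with the prescribed multiplicities, and multiply the per-generator bounds from the preceding lemma. You spell out a couple of small steps the paper compresses (the diagonal action of $K_\c$, and why $F^J$ acts by a $\Q(q)$-matrix), but the structure and key idea are identical.
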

\begin{proof}
    It suffices to prove this when $z$ is a PBW basis element. Since any $K_{\c}$ and $F^I$ acts on $X$ with $\Q(q)$-coefficients, we can further suppose $z=E^I$ for some $I:\Dep\to \{0,\dots, r'-1\}$. If $d(I)=l_1\a_1+\dots+l_n\a_n$, then $E^I$ is a $\Q(q)$-linear combination of products $E_{i_1}E_{i_2}\dots E_{i_m}$ where there are $l_i$ repetitions of $i$ among the $i_1,i_2,\dots,i_m$. By the previous lemma, the matrix representing $E_{i_1}\dots E_{i_m}$ has coefficients of the form $$(c_1(q)+d_1(q)q^{-2\la_{i_1}})\dots (c_m(q)+d_m(q)q^{-2\la_{i_m}}).$$
    In this product, there are at most $l_i$ terms with a $q^{-2\la_i}$ for every $i$. Thus, in every monomial appearing when expanding this product, the power of $q^{-2\la_i}$ is bounded above by $l_i$. Since $l_i\leq (r'-1)k_i$ for every $i,$ the proposition follows.
\end{proof}

\def\Urqg{\overline{U}_q(\gg)}

\def\UHrqg{\overline{U}^H_q(\gg)}

\section{Link invariants from $\UHrqg$ and genus bounds}  
\label{section: link invariants}

\def\CCC{\mathcal{C}}
\def\ttt{\boldsymbol{t}}\def\Proj{\text{Proj}}

In this section, we define the $\ADOgr'$ invariant of links $L\sb S^3$ and we prove a Seifert surface formula for these. Since this formula involves universal $\uqg$-invariants we briefly recall these in Subsections \ref{subs: universal invariants} and \ref{subs: RT invariants}. The proof of Theorem \ref{theorem: ADO genus} will follow easily from this formula and Proposition \ref{lemma: ESSENTIAL LEMMA}, see Subsection \ref{subs: Proof of THM 1}. We give some computations for $\ADOgr'$ of knots to illustrate our theorem.\\

\def\ev{\text{ev}}
\def\coev{\text{coev}}

In what follows we denote by $\coev$ the usual right coevaluation of vector spaces and by $\tau_{V,W}:V\ot W\to W\ot V$ be the usual vector space transposition $\tau(x\ot y)=y\ot x$.

\subsection{Universal $\uqg$-invariants}
\label{subs: universal invariants}
Let $T$ be a framed, oriented tangle in $\R^2\t[0,1]$
with ordered components, say $T_1,\dots, T_l$. Let $D$ be a planar diagram of $T$ which we suppose comes with the blackboard framing. We can always isotope $D$ so that it is formed from upward oriented crossings (positive or negative), and caps and cups. To every crossing we associate the $R$-matrix of $\uqg$ or its inverse, and to each cap or cup we associate group-likes as follows:

\begin{figure}[H]
\begin{pspicture}(0,-0.6424193)(11.64,0.6424193)
\psbezier[linecolor=black, linewidth=0.04](7.8,-0.21484207)(7.8,0.08515793)(7.6,0.38515794)(7.4,0.38515792846679686)(7.2,0.38515794)(7.0,0.08515793)(7.0,-0.21484207)
\psline[linecolor=black, linewidth=0.04, arrowsize=0.09cm 2.0,arrowlength=0.6,arrowinset=0.2]{->}(7.38,0.38515794)(7.4666667,0.38515794)
\psbezier[linecolor=black, linewidth=0.04](9.2,0.38515794)(9.2,0.08515793)(9.0,-0.21484207)(8.8,-0.21484207153320312)(8.6,-0.21484207)(8.4,0.08515793)(8.4,0.38515794)
\psline[linecolor=black, linewidth=0.04, arrowsize=0.09cm 2.0,arrowlength=0.6,arrowinset=0.2]{->}(8.78,-0.21484207)(8.866667,-0.21484207)
\psdots[linecolor=black, dotsize=0.14](9.18,0.18515792)
\psdots[linecolor=black, dotsize=0.14](7.03,-0.014842072)
\rput[bl](9.44,0.06515793){$g^{-1}$}
\rput[bl](6.63,-0.09484207){$g$}
\psbezier[linecolor=black, linewidth=0.04](10.4,0.38515794)(10.4,0.08515793)(10.6,-0.21484207)(10.8,-0.21484207153320312)(11.0,-0.21484207)(11.2,0.08515793)(11.2,0.38515794)
\psline[linecolor=black, linewidth=0.04, arrowsize=0.09cm 2.0,arrowlength=0.6,arrowinset=0.2]{->}(10.82,-0.21484207)(10.733334,-0.21484207)
\psbezier[linecolor=black, linewidth=0.04](5.4,-0.21484207)(5.4,0.08515793)(5.6,0.38515794)(5.8,0.38515792846679686)(6.0,0.38515794)(6.2,0.08515793)(6.2,-0.21484207)
\psline[linecolor=black, linewidth=0.04, arrowsize=0.09cm 2.0,arrowlength=0.6,arrowinset=0.2]{->}(5.82,0.38515794)(5.733333,0.38515794)
\psdots[linecolor=black, dotsize=0.14](5.43,-0.014842072)
\psdots[linecolor=black, dotsize=0.14](11.18,0.17515793)
\rput[bl](5.0,-0.11484207){$1$}
\rput[bl](11.52,0.08515793){$1$}
\psline[linecolor=black, linewidth=0.026](2.5,-0.61484206)(3.4,0.28515792)
\psline[linecolor=white, linewidth=0.026, doubleline=true, doublesep=0.026, doublecolor=black](3.0,0.28515792)(3.9,-0.61484206)
\psline[linecolor=black, linewidth=0.026](1.3,-0.5148421)(0.5,0.28515792)
\psline[linecolor=white, linewidth=0.026, doubleline=true, doublesep=0.026, doublecolor=black](0.9,0.28515792)(0.1,-0.5148421)
\psline[linecolor=black, linewidth=0.026, arrowsize=0.05291667cm 2.0,arrowlength=1.4,arrowinset=0.0]{<-}(0.1,0.68515795)(0.5,0.28515792)
\psline[linecolor=black, linewidth=0.026, arrowsize=0.05291667cm 2.0,arrowlength=1.4,arrowinset=0.0]{<-}(1.3,0.68515795)(0.9,0.28515792)
\psline[linecolor=black, linewidth=0.026, arrowsize=0.05291667cm 2.0,arrowlength=1.4,arrowinset=0.0]{<-}(2.6,0.68515795)(3.0,0.28515792)
\psline[linecolor=black, linewidth=0.026, arrowsize=0.05291667cm 2.0,arrowlength=1.4,arrowinset=0.0]{<-}(3.8,0.68515795)(3.4,0.28515792)
\psdots[linecolor=black, dotsize=0.16](0.5,-0.11484207)
\psdots[linecolor=black, dotsize=0.16](0.9,-0.11484207)
\psdots[linecolor=black, dotsize=0.16](3.0,0.28515792)
\psdots[linecolor=black, dotsize=0.16](3.4,0.28515792)
\rput[bl](0.0,-0.21484207){$a$}
\rput[bl](1.2,-0.21484207){$b$}
\rput[bl](2.4,0.08515793){$\overline{a}$}
\rput[bl](3.8,0.08515793){$\overline{b}$}
\end{pspicture}
\end{figure}

Here we denote $R=\sum a\ot b$, $R^{-1}=\sum \overline{a}\ot \overline{b}$ and $g=K_{2\rho}^{1-r'}$. Then, as we follow the orientation of a component $T_i$, we multiply the elements encountered from right to left. Doing this for all components gives an element $$Z_T\in \uqg^{\ot l}$$
where the $i$-th slot corresponds to the product along the $i$-th component. For more details, see \cite{Ohtsuki:BOOK, Habiro:bottom-tangles}.

\begin{example}
\label{example: ZT of tangle}
For the following tangle $T$ (ordered left to right)

\begin{figure}[H]
\label{figure: ZT of tangle}
   \psscalebox{1.0 1.0} % Change this value to rescale the drawing.
{
\begin{pspicture}(0,-0.86018735)(7.609526,0.86018735)
\psline[linecolor=black, linewidth=0.04, arrowsize=0.05291667cm 2.0,arrowlength=1.4,arrowinset=0.0]{->}(6.6,-0.13981262)(7.6,0.86018735)
\psline[linecolor=white, linewidth=0.026, doubleline=true, doublesep=0.04, doublecolor=black](6.6,0.66018736)(7.4,-0.13981262)
\psbezier[linecolor=black, linewidth=0.04](1.4,0.46018738)(1.4,-0.9398126)(3.2,-0.9398126)(3.2,0.4601873779296875)
\psbezier[linecolor=white, linewidth=0.04, doubleline=true, doublesep=0.04, doublecolor=black](0.6,0.46018738)(0.6,-0.9398126)(2.4,-0.9398126)(2.4,0.4601873779296875)
\psbezier[linecolor=black, linewidth=0.04](5.0,0.86018735)(5.0,-0.006479289)(5.4,-0.5398126)(6.2,-0.7398126220703125)(7.0,-0.9398126)(8.0,-0.7398126)(7.4,-0.13981262)
\psbezier[linecolor=black, linewidth=0.04](5.8,0.86018735)(5.8,0.060187377)(6.2,-0.5398126)(6.6,-0.1398126220703125)
\psline[linecolor=black, linewidth=0.04, arrowsize=0.05291667cm 2.0,arrowlength=1.4,arrowinset=0.0]{->}(2.4,0.33018738)(2.4,0.53018737)
\psline[linecolor=black, linewidth=0.04, arrowsize=0.05291667cm 2.0,arrowlength=1.4,arrowinset=0.0]{->}(3.2,0.34018737)(3.2,0.54018736)
\psline[linecolor=black, linewidth=0.04, arrowsize=0.05291667cm 2.0,arrowlength=1.4,arrowinset=0.0]{->}(6.8,0.46018738)(6.4,0.86018735)
\psdots[linecolor=black, dotsize=0.14](6.79,0.46018738)
\psdots[linecolor=black, dotsize=0.14](7.2,0.46018738)
\psdots[linecolor=black, dotsize=0.14](6.62,-0.12981263)
\psdots[linecolor=black, dotsize=0.14](7.49,-0.64981264)
\rput[bl](3.97,-0.029812623){=}
\rput[bl](0.0,-0.5398126){$T$}
\end{pspicture}
}

\end{figure}

\noindent  one has $$Z_T=\sum\overline{a}g^{-1}\ot \overline{b}g^{-1}\in \uqg^{\ot 2}.$$
Note that on the right we isotoped $T$ so that both strands at the crossing are oriented upwards.
\end{example}

\def\FC{F}

\subsection{Reshetikhin-Turaev invariants}\label{subs: RT invariants} We assume the reader is familiar with the usual Reshetikhin-Turaev construction for framed oriented tangles colored with objects in a ribbon category, which in our case is either the category $\CC$ of weight modules over $\UHrqg$ or $\Rep(\uqg)$ \cite{RT1}. We will only consider tangles $T$ where all components are colored with the same module $V$, so we denote the corresponding operator invariant by $\FC(T,V)$. Now let $X$ be a $\uqg$-module. Then, it is well-known that the universal invariant $Z_T$ determines the operator invariant $\FC(T,X)$ essentially by left multiplication on the corresponding module, see \cite{Habiro:bottom-tangles, Ohtsuki:BOOK}. For instance, if $T$ is the 2-component tangle of Example \ref{example: ZT of tangle}, then $\FC(T,X):\C\to (X^*\ot X^*\ot X\ot X)$
satisfies
\begin{align*}
    \FC(T,X)(1)&=(\id_{X^*}\ot\tau_{X,X^*}\ot \id_X)\circ L_{Z_T^{ev}}\circ (\coev_X\ot\coev_X)(1)\\
    &=\sum x_i'\ot x_j'\ot \overline{a}g^{-1}x_i\ot \overline{b}g^{-1}x_j
\end{align*}
where $(x_i)$ is a basis of $X$ and $(x'_i)$ the dual basis. Here $L_{Z_T^{ev}}:(X^*\ot X)^{\ot 2}\to (X^*\ot X)^{\ot 2}$ denotes left multiplication by $$Z_T^{ev}=\sum 1_{\uqg}\ot \overline{a}g^{-1}\ot 1_{\uqg}\ot\overline{b}g^{-1}.$$

\subsection{ADO invariants}\label{subs: ADO invariants} Let $L$ be a framed, oriented link presented as the closure of a $(1,1)$-tangle $L_o$. Let $\la\in\hh^*$ and suppose every component of $L_o$ is colored with $V_{\la}$. Then, the Reshetikhin-Turaev construction gives an operator $F(L_o,V_{\la}):V_{\la}\to V_{\la}$. For typical $\la$, $V_{\la}$ is simple so this operator is multiplication by a scalar $\lb L_o\rb$. We define $$\ADOgr(L,\la):=\lb L_o\rb$$
and we claim this is a link invariant. Note that the $(1,1)$-tangle $L_o$ is uniquely defined only when $L$ is a knot. For general links, the fact that the above is still a link invariant follows from the following argument: the category $\Proj(\CC)$ admits a two-sided modified trace $\ttt$ \cite{Rupert:unrolled}, hence $\ttt(F(L_o))$ is a link invariant by \cite{GPV:traces-pivotal-cats}. Since $V_{\la}$ is simple $\ttt(F(L_o,V_{\la}))=\lb L_o\rb \ttt(\id_{V_{\la}})$ and since all components of $L$ have the same color, it follows that $\lb L_o\rb$ is a link invariant as well.\\

Using the formula for the twist $\tt_{V_{\la}}$ of (\ref{eq: ribbon twist on Verma}), we can define an invariant of unframed, oriented links by setting 
\begin{align}
\label{eq: unframed ADO}
    \ADOgr'(L,\la):=q^{-w(L)(\la,\la-(r'-1)2\rho)}\ADOgr(L,\la)
\end{align}
where $w(L)$ denotes the writhe of $L$. 

\begin{comment}
\begin{remark}
When $\gg=\sl_2$, the invariant of Akutsu-Deguchi-Ohtsuki \cite{ADO} (with all components of the same color) is ours multiplied by the modified dimension $\ttt(\id_{V_{\la}})$. 
\end{remark}
\end{comment}

\subsection{Seifert surface formula}\label{subs: Seifert formula} Let $L$ be an unframed, oriented link with $s$ components and let $S$ be a Seifert surface for $L$. Then, after an isotopy, $S$ can be obtained by thickening a framed tangle $T$ with $2g+s-1$ components and all endpoints on $\R^2\t\{1\}$ and then attaching a disk on top, see Figure \ref{figure:Seifert}. We will consider $L$ as a framed link with the framing coming from $S$, thus $w(L)=0$ and $\ADOgr(L)=\ADOgr'(L)$. Let $L_o$ be the framed $(1,1)$-tangle obtained by opening $L$ at $p$ as indicated in the same figure.

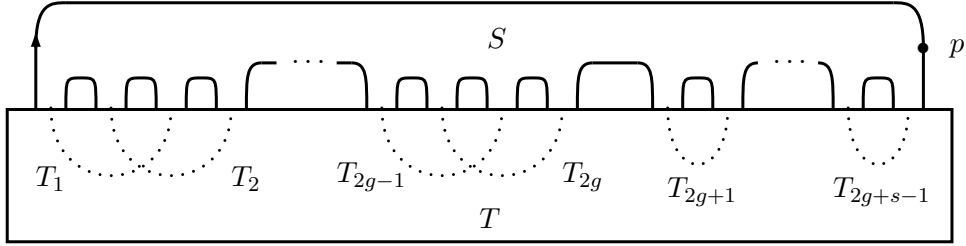
\begin{figure}[H]
   \centering
   \psscalebox{1.0 1.0} % Change this value to rescale the drawing.
{
\begin{pspicture}(0,-1.61)(12.72,1.61)
\psframe[linecolor=black, linewidth=0.04, dimen=outer](12.6,0.19)(0.0,-1.61)
\psbezier[linecolor=black, linewidth=0.04](0.8,0.19)(0.8,0.59)(0.8,0.59)(1.0,0.59)(1.2,0.59)(1.2,0.59)(1.2,0.19)
\psbezier[linecolor=black, linewidth=0.04](1.6,0.19)(1.6,0.59)(1.6,0.59)(1.8,0.59)(2.0,0.59)(2.0,0.59)(2.0,0.19)
\psline[linecolor=black, linewidth=0.04, arrowsize=0.05291667cm 2.0,arrowlength=1.4,arrowinset=0.0]{->}(0.4,0.19)(0.4,1.19)
\psbezier[linecolor=black, linewidth=0.04](2.4,0.19)(2.4,0.59)(2.4,0.59)(2.6,0.59)(2.8,0.59)(2.8,0.59)(2.8,0.19)
\psbezier[linecolor=black, linewidth=0.04](3.2,0.19)(3.2,0.59)(3.2,0.79)(3.4,0.79)
\psbezier[linecolor=black, linewidth=0.04](5.2,0.19)(5.2,0.59)(5.2,0.59)(5.4,0.59)(5.6,0.59)(5.6,0.59)(5.6,0.19)
\psbezier[linecolor=black, linewidth=0.04](6.0,0.19)(6.0,0.59)(6.0,0.59)(6.2,0.59)(6.4,0.59)(6.4,0.59)(6.4,0.19)
\psbezier[linecolor=black, linewidth=0.04](6.8,0.19)(6.8,0.59)(6.8,0.59)(7.0,0.59)(7.2,0.59)(7.2,0.59)(7.2,0.19)
\psbezier[linecolor=black, linewidth=0.04](4.8,0.19)(4.8,0.59)(4.8,0.79)(4.6,0.79)
\rput[bl](3.8,0.79){$\dots$}
\psbezier[linecolor=black, linewidth=0.04, linestyle=dotted, dotsep=0.10583334cm](0.6,0.19)(0.6,-1.01)(2.2,-1.01)(2.2,0.19)
\psbezier[linecolor=black, linewidth=0.04, linestyle=dotted, dotsep=0.10583334cm](1.4,0.19)(1.4,-1.01)(3.0,-1.01)(3.0,0.19)
\psbezier[linecolor=black, linewidth=0.04, linestyle=dotted, dotsep=0.10583334cm](5.0,0.19)(5.0,-1.01)(6.6,-1.01)(6.6,0.19)
\psbezier[linecolor=black, linewidth=0.04, linestyle=dotted, dotsep=0.10583334cm](5.8,0.19)(5.8,-1.01)(7.4,-1.01)(7.4,0.19)
\rput[bl](6.3,-1.41){$T$}
\psline[linecolor=black, linewidth=0.04](12.2,0.19)(12.2,0.99)
\psbezier[linecolor=black, linewidth=0.04](0.4,0.99)(0.4,1.59)(0.6,1.59)(0.8,1.59)
\psbezier[linecolor=black, linewidth=0.04](12.2,0.99)(12.2,1.59)(12.0,1.59)(11.8,1.59)
\psline[linecolor=black, linewidth=0.04](0.8,1.59)(11.8,1.59)
\psline[linecolor=black, linewidth=0.04](3.4,0.79)(3.6,0.79)
\psline[linecolor=black, linewidth=0.04](4.4,0.79)(4.6,0.79)
\psdots[linecolor=black, dotsize=0.14](12.2,0.99)
\rput[bl](6.4,0.99){$S$}
\rput[bl](12.55,0.86){$p$}
\psbezier[linecolor=black, linewidth=0.04](7.6,0.19)(7.6,0.59)(7.6,0.79)(7.8,0.79)
\psbezier[linecolor=black, linewidth=0.04](9.0,0.19)(9.0,0.59)(9.0,0.59)(9.2,0.59)(9.4,0.59)(9.4,0.59)(9.4,0.19)
\psbezier[linecolor=black, linewidth=0.04](8.6,0.19)(8.6,0.59)(8.6,0.79)(8.4,0.79)
\psbezier[linecolor=black, linewidth=0.04](9.8,0.19)(9.8,0.59)(9.8,0.79)(10.0,0.79)
\psbezier[linecolor=black, linewidth=0.04, linestyle=dotted, dotsep=0.10583334cm](8.8,0.19)(8.8,-0.81)(9.6,-0.81)(9.6,0.19)
\psbezier[linecolor=black, linewidth=0.04](11.4,0.19)(11.4,0.59)(11.4,0.59)(11.6,0.59)(11.8,0.59)(11.8,0.59)(11.8,0.19)
\psbezier[linecolor=black, linewidth=0.04](11.0,0.19)(11.0,0.59)(11.0,0.79)(10.8,0.79)
\psbezier[linecolor=black, linewidth=0.04, linestyle=dotted, dotsep=0.10583334cm](11.2,0.19)(11.2,-0.81)(12.0,-0.81)(12.0,0.19)
\rput[bl](10.2,0.79){$\dots$}
\psline[linecolor=black, linewidth=0.04](7.8,0.79)(8.4,0.79)
\rput[bl](0.4,-0.91){$T_1$}
\rput[bl](3.0,-0.91){$T_2$}
\rput[bl](4.4,-0.91){$T_{2g-1}$}
\rput[bl](7.4,-0.91){$T_{2g}$}
\rput[bl](8.8,-1.11){$T_{2g+1}$}
\rput[bl](11.0,-1.11){$T_{2g+s-1}$}
\end{pspicture}
}
   \caption{A Seifert surface for a link $L$. All the components $T_1,\dots, T_{2g+s-1}$ might be linked, the dotted lines are drawn only to indicate the endpoints of each $T_i$.}
   \label{figure:Seifert}
\end{figure}

\def\FF{\overline{F}}

Let $X=V_{\la}\ot V_{\la}^*$. Then $X^*\cong V_{\la}^{**}\ot V_{\la}^*\cong X$ via the pivotal structure of $\CC$, denote by $j:X^*\to X$ such isomorphism. By (\ref{eq: pivotal on Verma}), this is given by 
\begin{align}
\label{eq: j on X}
    j((v_A\ot v_B')')=j(v''_B)\ot v'_A=q^{(r'-1)(\la-d(B),2\rho)}v_B\ot v'_A=q^{(r'-1)(\la,2\rho)}q^{n_B}v_B\ot v'_A
\end{align}
for some $n_B\in\Z$. 
\medskip

Suppose each component of $T$ is oriented to the right. Then $$F(T,X):\C\to  (X^*\ot X^*\ot X\ot X)^{\ot g}\ot (X^*\ot X)^{\ot s-1}.$$
Let $\FF(T,X)$ be the result of applying $j$ to each $X^*$ tensor factor:
$$\FF(T,X)=[(j\ot j\ot \id_X\ot \id_X)^{\ot g}\ot (j\ot\id_X)^{\ot s-1}](F(T,X)):\C\to X^{\ot 4g+2(s-1)}.$$
Then $F(L_o,V_{\la}):V_{\la}\to V_{\la}$ is given by
$$(\id_{V_{\la}}\ot \lev_{V_{\la}}^{\ot 4g+2(s-1)})(\FF(T,X)\ot\id_{V_{\la}}).$$

Now, $X\in \CC_{[0]}$ and by Corollary \ref{cor: unrolled at deg 0 is small quantum} we can think of $X$ as a $\uqg$-module and $\FC(T,X)$ as the Reshetikhin-Turaev invariant of $T$ colored by $X$ inside the category $\Rep(\uqg)$. As mentioned above, this is determined by the universal $\uqg$-invariant $$Z_T\in \uqg^{\ot 2g+s-1}$$ as follows: let $Z^{ev}_T\in \uqg^{\ot 4g+2(s-1)}$ be the element obtained from $Z_T$ by inserting the unit $1_{\uqg}$ in all odd tensor factors. Let $L_{Z^{ev}_T}$ be the endomorphism of $(X^*\ot X)^{\ot 2g+s-1}$ that is left multiplication by $Z^{ev}_T$, so the units $1_{\uqg}$ in $Z_T^{ev}$ act on the $X^*$ factors and each tensor factor of $Z_T$ acts on the corresponding $X$ factor. Then
\begin{align*}
    \FC(T,X)=[(\id_{X^*}\ot\tau_{X,X^*}\ot \id_{X})^{\ot g}\ot (\id_{X^*\ot X})^{\ot s-1}]\circ L_{Z^{ev}_T}\circ  \coev_X^{\ot 2g+s-1}
\end{align*}
where $\coev_X$ denotes the usual right coevaluation of vector spaces. We can then write 
\begin{align}
\label{eq: Z'TX}
   \FF(T,X)=[(\id_X\ot\tau_{X,X}\ot \id_X)^{\ot g}\ot \id_X^{\ot 2(s-1)}]\circ L_{Z^{ev}_T}\circ [(j\ot\id_X) \coev_X]^{\ot 2g+s-1}. 
\end{align}
To summarize this discussion:

\begin{proposition}
\label{prop: Seifert formula}
    If $S$ is a genus $g$ Seifert surface of a link $L$ of $s$ components built from a tangle $T$ as above, then the ADO invariant of $L$ is given by $\ADOgr'(L)=\lb L_o\rb$ where
    $$\lb L_o\rb\id_{V_{\la}}=(\id_{V_{\la}}\ot \lev_{V_{\la}}^{\ot 4g+2(s-1)})(\FF(T,X)\ot\id_{V_{\la}})$$
    and $\FF(T,X)$ is given by the above formula.
\end{proposition}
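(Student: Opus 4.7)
The plan is to decompose the $(1,1)$-tangle $L_o$ geometrically into two pieces---the lower tangle $T$ and the closing disk with the point $p$ removed---apply the Reshetikhin-Turaev functor, and then identify each piece with a factor in the stated formula. First I would use that, because $L$ carries the Seifert framing induced by $S$, we have $w(L)=0$, so by (\ref{eq: unframed ADO}) $\ADOgr'(L) = \ADOgr(L) = \lb L_o\rb$, where $\lb L_o\rb$ is the scalar such that $F(L_o,V_{\la}) = \lb L_o\rb\, \id_{V_{\la}}$ (since $\la$ is typical, $V_{\la}$ is simple, so this scalar exists and is unique).

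Next I would observe that opening the disk at $p$ turns it into a cap-like piece connecting the $4g+2(s-1)$ top endpoints of $T$ in consecutive pairs, together with a single open strand running from the output down to the input of the $(1,1)$-tangle. Viewing the result as a bottom-to-top composition, $F(L_o,V_{\la})$ factors as
$$(\id_{V_{\la}} \ot \lev_{V_{\la}}^{\ot 4g+2(s-1)}) \circ (F(T, V_{\la}^{\ot 4g+2(s-1)}) \ot \id_{V_{\la}}),$$
once the orientations are fixed so that the disk's cups become right evaluations.

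The third step is to rewrite $F(T, V_{\la}^{\ot 4g+2(s-1)})$ in terms of the module $X=V_{\la}\ot V_{\la}^*$ by pairing adjacent top strands of $T$, so that each component of $T$ contributes one $X$ factor. Each such component carries weight $0$ and hence $X\in\CC_{[0]}$; by Corollary \ref{cor: unrolled at deg 0 is small quantum} we may compute $F(T,X)$ inside $\Rep(\uqg)$. The universal $\uqg$-invariant formalism of Subsections \ref{subs: universal invariants}--\ref{subs: RT invariants} then expresses $F(T,X)$ as $L_{Z_T^{ev}}$, pre-composed with $\coev_X^{\ot 2g+s-1}$ and post-composed with the transposition maps that rearrange the alternating $X^*$ and $X$ factors into $X\ot X$ pairs.

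Finally I would apply the pivotal isomorphism $j\colon X^*\to X$ of (\ref{eq: j on X}) to each $X^*$ factor to obtain $\FF(T,X)$: this converts $\coev_X$ into $(j\ot \id_X)\circ\coev_X$, matches the shape of (\ref{eq: Z'TX}), and simultaneously turns the evaluations $V_{\la}^*\ot V_{\la}\to \C$ from the disk into $\lev_{V_{\la}}$'s on the folded $X$ factors. Combining everything yields the stated formula. I expect the main bookkeeping obstacle to be verifying that the transpositions $\tau_{X,X}$ appear exactly on the $g$ handle blocks but not on the $s-1$ connecting bands---this reflects the fact that each handle in Figure \ref{figure:Seifert} interchanges its two strand pairs at the top, whereas a connecting band closes its pair directly.
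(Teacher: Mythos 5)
Your reconstruction is correct and follows the same route as the paper: split $L_o$ along the top of $T$, fold $T$ into an $X$-colored tangle with $X=V_{\la}\ot V_{\la}^*$, compute $F(T,X)$ via the universal $\uqg$-invariant after passing through Corollary \ref{cor: unrolled at deg 0 is small quantum}, and insert the pivotal isomorphism $j$ to straighten the $X^*$ factors. One small inconsistency in your write-up worth repairing: you state that the disk's cups should be \emph{right} evaluations, yet your displayed formula (and the paper's) correctly uses the left evaluations $\lev_{V_{\la}}$, i.e.\ the plain vector-space pairings; with the paper's conventions the pivotal corrections sit entirely inside $j$ (cf.\ (\ref{eq: j on X})), which is precisely why $\lev$ rather than $\rev$ appears in the final formula.
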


%\begin{remark}The {\em existence} of a genus bound for ADO is immediate from the above formulas: each $z\in \uqg$ acts on the usual basis of $X$ by polynomials in $q^{\la}$. The $j$'s add only an overall power of $q^{\la}$ and the evaluations do not add more powers in the usual basis of $X$. Thus, if for every $z$ in a basis of $\uqg$ every coefficient of the matrix of $z$ has breath $\leq C$, then $ADO$ has degree less than $2g(K)C$.\end{remark}

\subsection{Proof of Theorem \ref{theorem: ADO genus}}\label{subs: Proof of THM 1} Let $L$ be an unframed, oriented link. Let $S$ be a Seifert surface of $L$ and consider $L$ as a zero-writhe framed link. Consider the formula for $\FF(T,X)$ given in (\ref{eq: Z'TX}). By (\ref{eq: j on X}), it is clear that $[(j\ot\id_X)\coev_X(1)]^{\ot 2g+s-1}$ is $q^{(r'-1)(\la,2\rho)(2g+s-1)}$ times a $\Q(q)$-linear combination of standard basis vectors of $(X\ot X)^{2g+s-1}$. By Proposition \ref{lemma: ESSENTIAL LEMMA}, applied to each of the $2g+s-1$ non-trivial tensor slots of $Z_T^{ev}$, $L_{Z_T^{ev}}$ acts on the standard basis of $(X\ot X)^{2g+s-1}$ with coefficients polynomials in $\Q(q)[q^{-2\la_1},\dots, q^{-2\la_n}]$ of bounded degree $$\deg_{q^{-2\la_i}}\leq (2g+s-1)(r'-1)k_i$$ for every $i=1,\dots,n$. Therefore, in the standard basis, $\FF(T,X)(1) \in (X\ot X)^{2g+s-1}$ has coefficients of the form $$q^{(r'-1)(\la,2\rho)(2g+s-1)} p(q^{-2\la_1},\dots, q^{-2\la_n})$$
where $p(q^{-2\la_1},\dots, q^{-2\la_n})$ is a polynomial satisfying the above degree bound. Since $\ev_{V_{\la}}$ only takes values $0$ or 1 in the standard basis, it follows from Proposition \ref{prop: Seifert formula} that $$\ADOgr'(L)=\ADOgr(L)=\lb L_o\rb=q^{(r'-1)(\la,2\rho)(2g+s-1)} P(q^{-2\la_1},\dots, q^{-2\la_n})$$
where $P$ is also a polynomial in $\Q(q)[q^{-2\la_1},\dots, q^{-2\la_n}]$ still satisfying the same degree bound on each $q^{-2\la_i}$.\\

Now setting $t_i=q^{2\la(H_i)}$, the above is written as 
$$\ADOgr'(L)= t_1^{\frac{1}{2}(r'-1)k_1(2g+s-1)}\cdots t_n^{\frac{1}{2}(r'-1)k_n(2g+s-1)}P(t_1^{-1},\dots, t_n^{-1}).$$
In other words, up to the overall fractional power stated in the theorem, $\ADOgr'(L)$ becomes a polynomial in $\Q(q)[t_1^{\pm 1},\dots, t_n^{\pm 1}]$ satisfying $\deg_{t_i} \ \ADOgr'(L)\leq (2g+s-1)(r'-1)k_i$ for every $i$. Since the Seifert surface $S$ was arbitrary, this proves the theorem.

\subsection{Additional remarks} There are two situations in which $\ADOgr'(L)$ is a polynomial in $\Q(q)[t_1^{\pm 1},\dots, t_n^{\pm 1}]$ for all links $L$:
\begin{enumerate}
    \item When $r'$ is odd,
    \item When all $k_i$'s above are even. This happens for instance for $\sl_{n+1}$ with even $n$ since $2\rho=\sum_{i=1}^n i(n+1-i)\a_i$. For $\sl_3$ at $r=4$ this was noted by Harper in \cite[Corollary 6.8]{Harper:sl3-invariant} (though his $t_i^2$ is our $t_i$).
\end{enumerate}
Otherwise, $\ADOgr'(L)\in \Q(q)[t_1^{\pm 1},\dots, t_n^{\pm 1}]$ for links with odd number of components.

\subsection{Computations} 
\label{subs: computations KT and C}

\def\Harp{P}
We give some computations here for $ADO'_{\sl_3,4}$ illustrating our theorem. These computations were also carried out by Harper \cite{Harper:sl3-invariant}. Denote $x=t_1$ and $y=t_2$. Note that for $\sl_3$, $\sum_{\a\in\Dep}\a=2(\a_1+\a_2)$. Since $r'=2$, our theorem states that, if $K$ is a knot, then $P(K,x,y)=\ADO_{\sl_3,4}(K)$ has degree in $x$ less than $4g(K)$ and similarly for $y$. It turns out that $\Harp(K,x,y)=\Harp(K,x^{-1},y^{-1})$ so the maximal positive power of $x$ or $y$ in $\Harp(K,x,y)$ is bounded above by $2g(K)$.\\

\begin{center}
\begin{tabular}{ c|c }
 \hline
 $K$ & $ADO'_{\sl_3,4}(K,x,y)$  \\ 
 \hline
 $3_1$ & $1 + x^{-2} - 2x^{-1} - 2 x + x^2 + y^{-2} + x^{-2} y^{-2} - 
 x^{-1} y^{-2} - 2y^{-1} - x^{-2} y^{-1} $\\
 & $+ 2x^{-1} y^{-1} + xy^{-1} - 2 y + yx^{-1} + 2 x y - 
 x^2 y + y^2 - x y^2 + x^2 y^2 $  \\
 \hline
 $4_1$ & $25 + x^{-2} - 12x^{-1} - 12 x + x^2 + y^{-2} + x^{-2} y^{-2} - 3
 x^{-1} y^{-2} - 12y^{-1} - 3x^{-2} y^{-1} $\\
& $ + 12x^{-1} y^{-1} + 3 xy^{-1} - 12 y + 3 yx^{-1} + 
 12 x y - 3 x^2 y + y^2 - 3 x y^2 + x^2 y^2$ \\
 \hline
$5_1$ & $1 + 1/x^4 + 2/x^3 + 1/x^2 + x^2 + 2 x^3 + x^4 + 1/y^4 + 1/(
 x^4 y^4) + 1/(x^3 y^4) + 1/(x^2 y^4)  $ \\
 & $+ 1/(x y^4) + 2/y^3+ 1/(x^4 y^3) + 2/(x^3 y^3) + 2/(x^2 y^3)+ 2/(x y^3) + x/y^3 + 1/y^2 $\\
  & $ + 1/(x^4 y^2) + 2/(x^3 y^2)+ 1/(x^2 y^2) + 1/(x y^2) + (2 x)/y^2 + x^2/y^2 + 1/(x^4 y) $\\
 & $ + 2/(
 x^3 y) + 1/(x^2 y) + x/y + (2 x^2)/y + x^3/y + y/x^3+ (2 y)/x^2 + y/x + x^2 y    $\\
 & $+ 2 x^3 y + x^4 y+ y^2 + y^2/x^2+ (
 2 y^2)/x + x y^2 + x^2 y^2 + 2 x^3 y^2 + x^4 y^2+ 2 y^3   $\\
 & $+ y^3/x + 2 x y^3+ 2 x^2 y^3+ 2 x^3 y^3 + x^4 y^3 + y^4 + x y^4 + x^2 y^4 + x^3 y^4 + x^4 y^4$\\
 \hline
 $5_2$ & $37 + 6x^{-2} - 26x^{-1} - 26 x + 6 x^2 + 6y^{-2} + 6x^{-2} y^{-2} - 10
 x^{-1} y^{-2} - 26y^{-1}  - 10x^{-2} y^{-1}$\\
& $ + 26x^{-1} y^{-1} + 10 xy^{-1} - 26 y + 10 yx^{-1} + 
 26 x y - 10 x^2 y + 6 y^2 - 10 x y^2 + 6 x^2 y^2  $ \\
 \hline
$6_1$& $  85 + 6x^{-2} - 46x^{-1} - 46 x + 6 x^2 + 6y^{-2} + 6x^{-2} y^{-2} - 14
 x^{-1} y^{-2} - 46y^{-1} - 14x^{-2} y^{-1}$\\
 & $ + 46x^{-1} y^{-1} + 14 xy^{-1} - 46 y + 14 yx^{-1} + 
 46 x y - 14 x^2 y + 6 y^2 - 14 x y^2 + 6 x^2 y^2 $ \\
 \hline
$6_2$ & $25 + 1/x^4 - 12/x^3 + 35/x^2 - 38/x - 38 x + 35 x^2 - 
 12 x^3 + x^4 + 1/y^4 + 1/(x^4 y^4)  $\\
& $- 3/(x^3 y^4) + 3/(x^2 y^4) - 3/(x y^4) - 12/y^3 - 3/(x^4 y^3) + 12/(x^3 y^3) - 18/(x^2 y^3)   $\\
 & $+ 18/(
 x y^3)+ (3 x)/y^3 + 35/y^2 + 3/(x^4 y^2) - 18/(x^3 y^2)+ 35/(
 x^2 y^2) - 37/(x y^2) $\\
 & $- (18 x)/y^2 + (3 x^2)/y^2 - 38/y - 3/(
 x^4 y) + 18/(x^3 y) - 37/(x^2 y) + 38/(x y)  $\\
 & $+ (37 x)/y - (18 x^2)/y + (3 x^3)/y- 38 y + (3 y)/x^3 - (18 y)/x^2 + (37 y)/x + 
 38 x y   $\\
& $+ (3 y^2)/x^2 - (18 y^2)/x - 37 x y^2+ 35 x^2 y^2 - 18 x^3 y^2 + 3 x^4 y^2 - 
 12 y^3 + (3 y^3)/x $\\
& $+ 18 x y^3 - 18 x^2 y^3 + 12 x^3 y^3 - 
 3 x^4 y^3 + y^4   - 3 x y^4 + 3 x^2 y^4 - 3 x^3 y^4 + x^4 y^4$\\
 \hline
\end{tabular}
\end{center}

The knots $3_1,4_1,5_2,6_1$ have genus 1, which explains why the above polynomials are relatively simple for such knots. All positive powers of $x$ and $y$ are $\leq 2g=2$ as expected by our theorem. The knots $5_1, 6_2$ have genus 2, this is why they are a bit longer. Again, one sees that every positive power of $x$ or $y$ is $\leq 4$. \\%The polynomial for $7_1$ is quite long as $g(7_1)=3$.\\

 We also carried the computation of $\Harp(K)$ for the Kinoshita-Terasaka knot KT and the Conway knot C:

\begin{align*}
   \Harp(KT)&=721 - 12/x^3- 12/y^3-  12 x^3  + 12/(x^3 y^3)+  12 x^3 y^3-12 y^3\\
  &-34/(x^3 y^2)- 34 x^3 y^2- 34/(x^2 y^3)- 34 x^2 y^3\\
  & + 34/( x y^3)+ 34 x y^3+ 34/(x^3 y)+ 34 x^3 y  \\
 &- (34 x)/y^2- (34 y^2)/x- (34 x^2)/y- (34 y)/x^2 \\
 &+ 148/x^2 + 148 x^2+ 148/y^2+ 148/(x^2 y^2)+ 148 y^2+ 148 x^2 y^2 \\
 &  - 228/(x y^2)    - 228/(x^2 y) - 228 x^2 y  - 228 x y^2+ (228y)/x+ (228 x)/y   \\
 &- 496/x - 496 x+ 496/(x y)- 496/y - 496 y   + 496 x y\\
 \end{align*}
As observed, the maximal positive degree in $x$ is 3, hence $3\leq 2g(K)$ so that $g(K)>1$. A genus 2 Seifert surface is shown in \cite{Gabai:genera}, hence it has minimal genus.\\

For the Conway knot: 
\begin{align*}
 &\Harp(C)=649 + 2/(x^4 y^6)+ 2/(x^2 y^6) + 2/(x^6 y^4)+ (2 x^2)/y^4 + 2/(x^6 y^2)\\
 &+ (2 x^4)/y^2 + (2 y^2)/x^4+ 2 x^6 y^2  + (2 y^4)/x^2 + 2 x^6 y^4+ 2 x^2 y^6 + 2 x^4 y^6\\
 & + 4/x^5  + 4/x^3 +  4 x^3 + 4 x^5 - 4/(x^3 y^6) + 4/y^5 - 4/(x^5 y^5) + 4/(x^4 y^5) - 4/(x y^5)+ 4/(x^5 y^4)  \\
&  + (4 x)/y^4+ 4/y^3- 4/(x^6 y^3) - 4/(x^3 y^3) + (4 x^3)/y^3- 4/(x^5 y)  + (4 x^4)/y + (4 y)/x^4 - 4 x^5 y\\
& + 4 y^3 + (4 y^3)/x^3 - 4 x^3 y^3- 4 x^6 y^3 + ( 4 y^4)/x+ 4 x^5 y^4 + 4 y^5- 4 x y^5+ 4 x^4 y^5- 4 x^5 y^5 - 4 x^3 y^6 \\
 & + 8/(x^3 y^5) - 8/(x^2 y^5)+ 8/(x^3 y^4) + 8/(x y^4)  + 8/(x^5 y^3) + 8/(x^4 y^3)- (8 x)/y^3- (8 x^2)/y^3 \\
&  - 8/(x^5 y^2)  - (8 x^3)/y^2+ 8/(x^4 y)- (8 x^3)/y - (8 y)/x^3+ 8 x^4 y- (8 y^2)/x^3- 8 x^5 y^2 \\
 & - ( 8 y^3)/x^2- (8 y^3)/x+ 8 x^4 y^3 + 8 x^5 y^3+ 8 x y^4+ 8 x^3 y^4 - 8 x^2 y^5 + 8 x^3 y^5 \\
& + 12/(x^2 y^4) + 12/( x^4 y^2) + (12 x^2)/y^2 + (12 y^2)/x^2 + 12 x^4 y^2+ 12 x^2 y^4\\
&  - 20/x^4 - 20 x^4 - 20/y^4- 20/(x^4 y^4) - 20 y^4 - 20 x^4 y^4\\
& - 46/(x^2 y^3) + 46/(x y^3)- 46/(x^3 y^2) - (46 x)/y^2+ 46/(x^3 y)- (46 x^2)/y- (46 y)/x^2 \\
&+ 46 x^3 y- (46 y^2)/x- 46 x^3 y^2+ 46 x y^3 - 46 x^2 y^3\\
&+ 164/x^2 + 164 x^2  + 164/y^2+ 164/(x^2 y^2) + 164 y^2  + 164 x^2 y^2 \\
&- 248/(x y^2) - 248/(x^2 y)  + ( 248 x)/y + (248 y)/x- 248 x^2 y- 248 x y^2\\
   &- 476/x - 476 x  - 476/y + 476/(x y) - 476 y + 476 x y\\
\end{align*}

The maximal positive degree in $x$ is $6$, hence $6\leq 2g(K)$ or $g(K)\geq 3$. A genus 3 Seifert surface is given in \cite{Gabai:genera}, hence it has minimal genus.
\medskip

It is interesting to note that the above two polynomials become equal at $x=y$:
\begin{align*}
   \Harp(KT)=\Harp(C)&= 1177 + 12/y^6 - 68/y^5 + 216/y^4 - 480/y^3 + 792/y^2 - 1060/y \\
   &-  1060 y + 792 y^2 - 480 y^3+ 216 y^4 - 68 y^5 + 12 y^6.
\end{align*}

This shows the power of multivariable knot invariants as opposed to single-variable.

\subsection{The non-simply-laced case}\label{subs: non-simply-laced} In the non-simply-laced case, there are $d_1,\dots,d_n$ making $(d_ia_{ij})$ symmetric and the form $(-,-)$ on $\hh^*$ is defined by $(\a_i,\a_j)=d_ia_{ij}$. Setting $q_i=q^{d_i}$, the definition of the unrolled quantum group is adapted by setting $$K_iE_j=q_i^{a_{ij}}E_jK_i, \ K_iF_j=q_i^{-a_{ij}}F_jK_i, \ [E_i,F_j]=\d_{ij}(K_i-K_i^{-1})/(q_i-q_i^{-1})$$ and using the $q$-integers $[k]_{q_i}=(q_i^k-q_i^{-k})/(q_i-q_i^{-1})$ in the Serre relations, see \cite{CP:BOOK}. If $r_{\b}$ is the order of $q^{(\b,\b)}$ for every $\b\in\Dep$, then $\UHr$ and $\uqg$ are defined in a similar way as before but setting $E_{\b}^{r_{\b}}=0$ for all $\b\in\Dep$. Weight modules must satisfy $K_i=q_i^{H_i}$ for every $i$. Then, Proposition \ref{lemma: ESSENTIAL LEMMA} states that for every $z\in\uqg$ the coefficients of $z:X\to X$ in the standard basis are polynomials in $\Q(q)[q_1^{-2\la_1},\dots,q_n^{-2\la_n}]$ where the power of $q_i^{-2\la_i}$ is bounded above by $R_i$, where
\begin{align}
    \label{eq: non simply-laced ESSENTIAL LEMMA}
    \sum_{\b\in\Dep}(r_{\b}-1)\b=R_1\a_1+\dots+R_n\a_n.
\end{align}

\begin{theorem}
\label{theorem: non-simply-laced Thm 1}
Let $L$ be a link of $s$ components. There is an isotopy invariant $$\ADOgr'(L)\in (t_1^{R_1}\cdots t_n^{R_n})^{\frac{s-1}{2}}\cdot \Q(q)[t_1^{\pm 1},\dots, t_n^{\pm 1}]$$
satisfying
$$\deg_{t_i}\ADOgr'(L)\leq (2g(L)+s-1)R_i$$
for every $i=1,\dots,n$, where $R_1,\dots, R_n$ are defined by (\ref{eq: non simply-laced ESSENTIAL LEMMA}).
\end{theorem}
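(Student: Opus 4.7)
The proof is a direct translation of the simply-laced argument in Subsection \ref{subs: Proof of THM 1}. None of the categorical and topological scaffolding—weight modules over $\UHr$, the ribbon structure of $\CC$, the definition of $\ADOgr'$ via $(1,1)$-tangle closures, the Seifert surface formula of Proposition \ref{prop: Seifert formula}, and the reduction of $F(L_o, V_\lambda)$ to $L_{Z_T^{ev}}$ acting on $X^{\otimes 2g+s-1}$ with $X = V_\lambda \otimes V_\lambda^*$—uses anything specific to a symmetric Cartan matrix, beyond the replacements of $q$ by $q_i$ in the generating relations of $\UHr$ and $\uqg$ already noted in Subsection \ref{subs: non-simply-laced}. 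So the only substantive new content is the extension of Proposition \ref{lemma: ESSENTIAL LEMMA} already asserted there.

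I would prove the extended Proposition \ref{lemma: ESSENTIAL LEMMA} by re-running the same chain of lemmas. First, the three PBW lemmas (Lemmas \ref{lemma: multip 2 product of EI's}, \ref{lemma: multip 1 non-simple root is product}, \ref{lemma: commutation for simple root}) transfer unchanged, since they concern only the $L_R$-grading on $V_-$ and the shape of Lusztig's braid automorphisms, with $q$-integers replaced by $q_i$-integers throughout. Second, the weight module lemmas then show that $E_i$ acts on the standard basis of $V_\lambda$ with entries of the form $a(q) q_i^{\lambda_i} + b(q) q_i^{-\lambda_i}$, and on $X$ with entries of the form $c(q) + d(q) q_i^{-2\lambda_i}$; the proofs are the same once one uses $K_i = q_i^{H_i}$. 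Third, propagating this through a PBW monomial $E^I$ with $d(I) = l_1 \alpha_1 + \dots + l_n \alpha_n$ bounds $\deg_{q_i^{-2\lambda_i}}$ by $l_i$ exactly as before. Since each $I(\beta)$ now ranges over $\{0,\dots,r_\beta - 1\}$, the maximum value of $l_i$ over all admissible $I$ is precisely $R_i$ from equation (\ref{eq: non simply-laced ESSENTIAL LEMMA}), replacing the $(r'-1)k_i$ of the simply-laced case.

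With this in hand, the final assembly is mechanical: the Seifert formula of Proposition \ref{prop: Seifert formula} writes $\ADOgr'(L) \cdot \id_{V_\lambda}$ as $L_{Z_T^{ev}}$ applied to $2g + s - 1$ copies of $(j \otimes \id_X)\coev_X$, followed by evaluations. The pivotal isomorphism $j$ contributes an overall prefactor that is a product of powers $q_i^{R_i \lambda_i (2g+s-1)}$ up to integer shifts, substituting $t_i = q_i^{2\lambda_i}$ produces the claimed element of $(t_1^{R_1} \cdots t_n^{R_n})^{(s-1)/2} \cdot \Q(q)[t_1^{\pm 1},\dots,t_n^{\pm 1}]$, and the extended Proposition \ref{lemma: ESSENTIAL LEMMA} gives $\deg_{t_i} \ADOgr'(L) \leq (2g(L)+s-1) R_i$. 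The main place where care is needed is identifying the correct pivotal element $g$ in the non-simply-laced setting and verifying that the analog of equation (\ref{eq: pivotal on Verma}) produces a prefactor whose $t_i$-exponents match the $R_i$; since the relevant combinatorics is governed by $\sum_{\beta \in \Delta^+} (r_\beta - 1)\beta$ rather than $(r'-1) \cdot 2\rho$, this is exactly the sum defining $R_i$ and the bookkeeping closes up.
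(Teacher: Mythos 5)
Your argument matches the paper's proof exactly: the paper establishes Theorem~\ref{theorem: non-simply-laced Thm 1} with a one-line remark in Subsection~\ref{subs: non-simply-laced}, observing that the simply-laced proof carries over once Proposition~\ref{lemma: ESSENTIAL LEMMA} is replaced by its non-simply-laced analog with the bound $R_i$ from (\ref{eq: non simply-laced ESSENTIAL LEMMA}) and $t_i$ is set to $q_i^{2\lambda_i}$. You spell out why the PBW lemmas, the weight-module lemmas, and the degree propagation through $E^I$ all transfer with $q \mapsto q_i$ and $r'-1 \mapsto r_\beta - 1$, and you correctly identify the pivot/prefactor bookkeeping (governed by $\sum_{\beta}(r_\beta-1)\beta = \sum_i R_i\alpha_i$) as the only place needing care; this is precisely the content the paper compresses into its brief remark.
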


The invariant and the theorem is obtained the same way as in the simply-laced case, but setting $t_i=q_i^{2\la_i}$ for every $i$ instead and applying the corresponding generalization of Proposition \ref{lemma: ESSENTIAL LEMMA} with (\ref{eq: non simply-laced ESSENTIAL LEMMA}) above.

%In type $B_2$ there are two simple roots $\a_1,\a_2$ and the positive roots are $\a_1+\a_2$ and $2\a_1+\a_2$. Hence $2\rho=4\a_1+3\a_2$.

\section{Unrolled restricted quantum groups as doubles of crossed products}
\label{section: equivariantization thm}

In this section we define all the elements appearing in Theorem \ref{theorem: equivariantization}: crossed products, relative Drinfeld centers, equivariantization. Our proof relies on Hopf algebra descriptions of these objects, for this we introduce twisted Drinfeld doubles and semi-direct products. The proof of Theorem \ref{theorem: equivariantization} comprises Subsections \ref{subs: TDD of small Borel} to \ref{subs: ribbon for TDDs} and is summarized in Subsection \ref{subs: proof of Theorem 2}. Finally, we prove Theorem \ref{corollary: P coincides with ADO} in Subsection \ref{subs: recovering P}.\\

During the whole section we let $G$ be an abelian group.

\def\CC{\mathcal{C}}

\def\wDe{\widetilde{\De}}\def\wot{\widetilde{\ot}}\def\DD{\mathcal{D}}
\def\v{\phi}

\def\uD{\underline{D}}

\subsection{$G$-graded categories}  A monoidal category $\CC$ is {\em $G$-graded } if it splits as a disjoint union of full subcategories $\CC=\coprod_{\la\in G}\CC_{\la}$ such that $\CC_{\la}\ot \CC_{\mu}\sb \CC_{\la+\mu}$. We say that $\CC$ is {\em $G$-crossed} if $G$ acts on $\CC$ by monoidal autoequivalences $\v_{\la}:\CC\to \CC$ such that $\v_{\la}(\CC_{\mu})\sb \CC_{\mu}$ for each $\la,\mu\in G$. We will only consider {\em strict} monoidal actions, that is, the isomorphisms $\v_{\la+\mu}(X)\cong \v_{\la}(\v_{\mu}(X))$  and $\v_{\la}(X\ot Y)\cong \v_{\la}(X)\ot \v_{\la}(Y)$ are all identity maps of vector spaces (our categories will be representation categories of Hopf group-coalgebras).
A {\em braided $G$-crossed category} is a $G$-crossed category with natural isomorphisms $c_{X,Y}:X\ot Y\to \v_{\la}(Y)\ot X$ for all $X\in \CC_{\la}, Y\in\CC, \la\in G$ satisfying certain axioms. For more details, see \cite{Turaev:homotopy}. 
\medskip

\begin{remark}
    If $G$ is abelian, a $G$-graded category $\CC$ might be braided in two senses. On the one hand, $\CC$ might carry a $G$-action and be $G$-braided in the above sense. On the other hand, it might be braided in the usual sense (no $G$-action involved). 
\end{remark}

\begin{remark}
\label{remark: bicharacter}
    Suppose $\CC$ is $G$-graded and braided in the usual sense. If $\chi:G\t G\to \C^*$ is a bicharacter, then it is easy to see that $c^{\chi}_{X,Y}=\chi(\la,\mu)c_{X,Y}$ is a braiding, where $X\in \CC_{\la}, Y\in \CC_{\mu}$. If $(\CC,c,\theta)$ is ribbon, then $(\CC,c^{\chi},\theta^{\chi})$ is ribbon where $\theta^{\chi}_X=\chi(\la,\la)\theta_X$ if $X\in\CC_{\la}$. It is easy to see that the unframed link invariants obtained from $(\CC,c^{\chi})$ are the same as those of $(\CC,c)$: each crossing multiplies the invariant by a $\chi(\la,\la)^{\pm 1}$, but these cancel out when multiplying by the framing factor.
\end{remark}

Suppose $G$ acts on monoidal category $\CC$ by monoidal autoequivalences $\phi_{\la}:\CC\to\CC$. Then we define the {\em crossed product} $\CC\rtimes G$ as the category whose objects are pairs $(X,\la)$ with $X\in \CC$ and $\la\in G$ and $\Hom_{\CC\rtimes G}((X,\la),(Y,\mu))=\Hom_{\CC}(X,Y)\d_{\la,\mu}$. This is a $G$-graded category in a natural way. It is a $G$-graded monoidal category if we define $$(X,\la)\ot (Y,\mu)=(X\ot\phi_{\la}(Y),\la+\mu).$$
\medskip

The {\em relative Drinfeld center} of $\CC\rtimes G$ is the category consisting of pairs $(X,\la,\s)$ where $(X,\la)$ is an object of $\CC\rtimes G$ and $\s=\{\s_Y\}_{Y\in\CC}$ is a {\em half-braiding} of $(X,\la)$, that is, a collection of isomorphisms $$\s_Y:(Y,0)\ot (X,\la)\to (X,\la)\ot (Y,0)$$
satisfying certain axioms. We denote it by $\ZZ_{\CC}(\CC\rtimes G)$, this is a braided $G$-crossed category in a natural way, see \cite{GNN:centers} or \cite{LN:TDD} for more details.

\medskip

\subsection{Equivariantization} The {\em equivariantization} of a $G$-crossed category $\CC$ is the category $\CC^G$ whose objects are pairs $(V,f)$ where $V\in \CC$ and $f=\{f_{\la,V}\}_{\la\in G}$ is a family of isomorphisms $f_{\la,V}:\v_{\la}(V)\to V$ satisfying $f_{\la+\mu,V}=f_{\la,V}\circ \v_{\la}(f_{\mu,V})$. It is easy to see that if $\CC$ is a braided $G$-crossed category, then $\CC^G$ is braided in the usual sense with braiding $c_{X,Y}^G$ defined as the composition

\begin{align}
\label{eq: braiding on equivariantization}
\xymatrix{c^G_{X,Y}:X\ot Y\ar[r]^{c_{X,Y}} & \phi_{\la}(Y)\ot X\ar[r]<2pt>^{f_{\la, Y}\ot\id} & Y\ot X}
\end{align}

provided $X\in \CC_{\la}$. See \cite{EGNO:BOOK} for more details.

\subsection{Hopf group-coalgebras} A {\em Hopf $G$-coalgebra} is a family $\uD=\{D_{\la}\}_{\la\in G}$ of algebras with unit $D_{\la}$ together with coproducts $\De_{\la,\mu}:D_{\la+\mu}\to D_{\la}\ot D_{\mu}$, a counit $\e:D_0\to\C$ and antipodes $ S_{\la}:D_{\la}\to D_{-\la}$ for each $\la,\mu\in G$ satisfying a generalization of the usual Hopf algebra axioms. From the axioms it follows that $\Rep(\uD)=\coprod_{\la\in G}\Rep(D_{\la})$ is a $G$-graded monoidal category with tensor product induced by $\De_{\la,\mu}$. The antipode induces a duality on this category. See \cite{Turaev:homotopy, Virelizier:Hopfgroup} for more details.
\medskip

We say that $\uD$ is {\em $G$-crossed} if $G$ acts on each $D_{\la}$ by algebra automorphisms $\phi_{\nu}$ and $(\phi_{\nu}\ot\phi_{\nu})\De_{\la,\mu}=\De_{\la,\mu}\phi_{\nu}$ for each $\la,\mu,\nu\in G$. It follows that $\Rep(\uD)$ is $G$-crossed if for each $X\in\Rep(D_{\la})$ we define $\phi_{\nu}(X)$ to be $X$ as a vector space but with $D_{\la}$-module structure $h\cdot_{\nu}x:=\phi_{\nu}(h)x$ ($h\in D_{\la}, x\in X$).\\

We say $\uD$ is {\em $G$-braided} if it is endowed with a family $R=\{R_{\la,\mu}\}_{\la,\mu\in G}$ of invertible elements of $D_{\la}\ot D_{\mu}$ with the following property: if $X\in\Rep(D_{\la}), Y\in\Rep(D_{\mu})$ then $$c_{X,Y}: X\ot Y\to \v_{\la}(Y)\ot X, \ x\ot y\mapsto \tau(R_{\la,\mu}\cdot x\ot y)$$
defines a $G$-braiding on $\Rep(\uD).$ Here $\tau$ denotes the usual transposition of tensor factors.\\

\subsection{Drinfeld twists}
\label{subs: Drinfeld twist}
A {\em Drinfeld twist} on a Hopf $G$-coalgebra $\uD$ consists of a family $J=\{J_{\la,\mu}\}$ of invertible elements of $D_{\la}\ot D_{\mu}$ satisfying $$(1_{\la}\ot J_{\mu,\nu})\cdot[\id\ot\De_{\mu,\nu}(J_{\la,\mu+\nu})]=(J_{\la,\mu}\ot 1_{\nu})\cdot[\De_{\la,\mu}\ot\id(J_{\la+\mu,\nu})].$$
 This guarantees that $\De^J_{\la,\mu}:=J_{\la,\mu}\De_{\la,\mu}J^{-1}_{\la,\mu}$ is a new associative coproduct on $\uD$. If $\uD$ is $G$-braided and $J$ is a Drinfeld twist, then $(\uD,\De^J)$ is $G$-braided with
 $$ R^J_{\la,\mu}=\tau(J_{\mu,\la})\cdot R_{\la,\mu}\cdot J_{\la,\mu}^{-1}.$$

Applying a Drinfeld twist does not change the associated monoidal categories up to equivalence. This requires the more general notion of monoidal equivalence: a {\em monoidal functor} $F:(\CC,\ot)\to(\DD,\ot')$ between monoidal categories $(\CC,\ot), (\DD,\ot')$ is a functor $F:\CC\to\DD$ together with a family of isomorphisms $$J_{X,Y}:F(X\ot Y)\to F(X)\ot' F(Y)$$ satisfying hexagon axioms (see \cite{EGNO:BOOK}), and two monoidal categories are {\em monoidally equivalent} if there are monoidal functors $F:\CC\to\DD$ and $G:\DD\to\CC$ which are inverses to each other. If $(\CC,c),(\DD,c')$ are braided, a monoidal equivalence is braided if $c'_{F(X),F(Y)}J_{X,Y}=J_{Y,X}F(c_{X,Y})$.\\

In the case $\CC=\Rep(\uD,\De)$ and $\DD=\Rep(\uD,\De^J)$, the identity functor $F:\Rep(\uD,\De)\to \Rep(\uD,\De^J)$ is a monoidal equivalence with $J_{X,Y}(x\ot y)=J_{\la,\mu}\cdot x\ot y$ if $X\in \Rep(D_{\la}), Y\in\Rep(D_{\mu})$. If $\uD$ is braided and $\uD^J$ has the above $R$-matrix, then the equivalence is braided. Thus:

\begin{lemma}
    \label{lemma: Drinfeld twist gives equivalence}
    If $J=\{J_{\la,\mu}\}$ is a Drinfeld twist on a $G$-braided Hopf $G$-coalgebra $\uD$, then the categories $\Rep(\uD)$ and $\Rep(\uD^J)$ are equivalent as $G$-braided categories.
\end{lemma}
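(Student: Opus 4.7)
The plan is to take $F:\Rep(\uD,\De)\to\Rep(\uD,\De^J)$ to be the identity on underlying vector spaces and morphisms, equipped with the monoidal structure maps
\begin{equation*}
J_{X,Y}:F(X\ot Y)\to F(X)\ot F(Y),\qquad x\ot y\mapsto J_{\la,\mu}\cdot(x\ot y),
\end{equation*}
for $X\in\Rep(D_{\la})$ and $Y\in\Rep(D_{\mu})$. Since the twist does not alter the underlying algebras $D_{\la}$, their $G$-action by automorphisms, nor the grading of $\Rep(\uD)$, the $G$-crossed data is preserved by $F$ automatically; only the monoidal and braided coherences need to be checked.

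First I would verify that $J_{X,Y}$ is a well-defined morphism in $\Rep(\uD,\De^J)$: the defining relation $\De^J_{\la,\mu}=J_{\la,\mu}\De_{\la,\mu}J^{-1}_{\la,\mu}$ says exactly that left multiplication by $J_{\la,\mu}$ intertwines the $D_{\la+\mu}$-action on $X\ot Y$ induced by $\De_{\la,\mu}$ with the one induced by $\De^J_{\la,\mu}$. The hexagon axiom for $J_{X,Y}$ then unfolds on $F(X\ot Y\ot Z)$ to the identity
\begin{equation*}
(1_{\la}\ot J_{\mu,\nu})\cdot[\id\ot\De_{\mu,\nu}(J_{\la,\mu+\nu})]=(J_{\la,\mu}\ot 1_{\nu})\cdot[\De_{\la,\mu}\ot\id(J_{\la+\mu,\nu})],
\end{equation*}
which is precisely the cocycle condition of Subsection \ref{subs: Drinfeld twist} imposed on a Drinfeld twist. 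An inverse for $F$ is the identity functor the other way, equipped with structure maps given by left multiplication by $J^{-1}_{\la,\mu}$, so $F$ is a strict monoidal equivalence.

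It remains to verify the $G$-braided compatibility $c^{\De^J}_{F(X),F(Y)}\circ J_{X,Y}=J_{\phi_{\la}(Y),X}\circ F(c^{\De}_{X,Y})$, where $\phi_{\la}(Y)$ has the same grading $\mu$ as $Y$ so that $J_{\phi_{\la}(Y),X}$ acts as left multiplication by $J_{\mu,\la}$. Substituting $R^J_{\la,\mu}=\tau(J_{\mu,\la})R_{\la,\mu}J^{-1}_{\la,\mu}$ into $c^{\De^J}(z\ot w)=\tau(R^J_{\la,\mu}\cdot(z\ot w))$, the $J^{-1}_{\la,\mu}$ cancels against the $J_{\la,\mu}$ coming from $J_{X,Y}$, and both sides reduce to $x\ot y\mapsto\sum j_1r_2\cdot y\ot j_2r_1\cdot x$ with $J_{\mu,\la}=\sum j_1\ot j_2$ and $R_{\la,\mu}=\sum r_1\ot r_2$. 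The only delicate point is the bookkeeping between the flip $\tau$ on $D_{\mu}\ot D_{\la}$ appearing in the formula for $R^J$ and the vector-space flip entering $c_{X,Y}(x\ot y)=\tau(R_{\la,\mu}\cdot(x\ot y))$; once this is sorted out, the identity follows directly from the definitions.
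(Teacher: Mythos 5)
Your proof is correct and follows essentially the same route as the paper, which declares the identity functor with structure maps $J_{X,Y}$ given by left multiplication by $J_{\la,\mu}$ to be the desired monoidal and braided equivalence; you simply spell out the verifications (well-definedness from $\De^J = J\De J^{-1}$, hexagon from the cocycle identity, braided compatibility from cancellation of $J_{\la,\mu}^{-1}J_{\la,\mu}$ inside $R^J$) that the paper leaves implicit. One small terminological nit: the functor is an isomorphism on underlying categories but is not a \emph{strict} monoidal functor, since the coherence maps $J_{X,Y}$ are not identities.
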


\subsection{Semidirect product and equivariantization} Let $\{D_{\la}\}_{\la\in G}$ be a $G$-braided Hopf $G$-coalgebra. Since $G$ acts on each $D_{\la}$ we can define the semidirect product $D^G_{\la}:=\C[G]\ltimes D_{\la}$, thus, in $D^G_{\la}$ one has $\mu\cdot x=\phi_{\mu}(x)\cdot\mu$ for each $\mu\in G, x\in D_{\la}$. Then $\uD^G=\{D^G_{\la}\}_{\la\in G}$ is a Hopf $G$-coalgebra with the structure tensors of $\uD$ and the usual Hopf structure on $\C[G]$. The category $\Rep(\uD^G)$ is braided (in the usual sense) with 
\begin{align}
    c_{X,Y}:X\ot y\to Y\ot X, \ x\ot y\mapsto \tau(R^G_{\la,\mu}\cdot x\ot y)
\end{align}
if $X\in \Rep(D_{\la})^G$, $Y\in\Rep(D_{\mu}^G)$ and $R^G_{\la,\mu}=(1\ot \phi_{\la})\cdot R_{\la,\mu}$.

\begin{proposition}
\label{prop: equivariantization is SS}
The equivariantization $\Rep(\uD)^G$ is braided equivalent to $\Rep(\uD^G)$.
\end{proposition}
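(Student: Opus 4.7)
The plan is to exhibit a strict identification of module data on the two sides and then verify that the monoidal and braided structures transport correctly.

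On objects, a $D^G_\lambda$-module is by definition a vector space $X$ with commuting $D_\lambda$- and $\C[G]$-actions satisfying the semidirect-product commutation relation $\nu\cdot(h\cdot x)=\phi_\nu(h)\cdot(\nu\cdot x)$. An object of $\Rep(D_\lambda)^G$ consists of a $D_\lambda$-module $X$ together with $D_\lambda$-linear isomorphisms $f_{\nu,X}\colon\phi_\nu(X)\to X$ satisfying the cocycle condition $f_{\nu+\mu,X}=f_{\nu,X}\circ\phi_\nu(f_{\mu,X})$. Define a functor $F\colon\Rep(\uD)^G\to\Rep(\uD^G)$ by keeping the $D_\lambda$-action and setting $\nu\cdot x:=f_{\nu,X}(x)$ (up to an inverse sign, depending on the chosen convention). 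Since the module structure on $\phi_\nu(X)$ is $h\cdot_\nu x=\phi_\nu(h)x$, the $D_\lambda$-linearity of each $f_{\nu,X}$ translates directly into the semidirect-product relation, and the cocycle condition becomes associativity of the $G$-action. Morphism spaces on the two sides coincide (both are $D_\lambda$-linear $G$-equivariant maps), so $F$ is fully faithful; essential surjectivity is immediate by running the construction backwards. Hence $F$ is an equivalence of categories, clearly respecting the $G$-grading.

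For the monoidal structure, the coproduct on $\uD^G$ restricts to $\De_{\lambda,\mu}$ on $D_{\lambda+\mu}$ and is group-like on $\C[G]$, namely $\De(\nu)=\nu\ot\nu$. Thus the tensor product of two $\uD^G$-modules inherits the diagonal $G$-action $\nu\cdot(x\ot y)=(\nu\cdot x)\ot(\nu\cdot y)$. Under $F$ this corresponds exactly to the equivariant structure $f^{X\ot Y}_\nu=f^X_\nu\ot f^Y_\nu$ on a tensor product in the equivariantization, using that $\phi_\nu$ is monoidal so that $\phi_\nu(X\ot Y)=\phi_\nu(X)\ot\phi_\nu(Y)$. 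Unit and associativity constraints transport without change.

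The main point to verify is the braiding. Writing $R_{\lambda,\mu}=\sum a_i\ot b_i$, the equivariantization braiding (\ref{eq: braiding on equivariantization}) sends $x\ot y$ to
\[
(f_{\lambda,Y}\ot\id)(\tau(R_{\lambda,\mu}(x\ot y)))=\sum f_{\lambda,Y}(b_iy)\ot a_ix,
\]
while the braiding in $\Rep(\uD^G)$ obtained from $R^G_{\lambda,\mu}=(1\ot\phi_\lambda)R_{\lambda,\mu}$ sends $x\ot y$ to $\sum\phi_\lambda(b_i)y\ot a_ix$. Using $D_\mu$-linearity of $f_{\lambda,Y}\colon\phi_\lambda(Y)\to Y$ together with the semidirect-product commutation $\lambda h=\phi_\lambda(h)\lambda$ in $D^G_\mu$, one checks directly that these two expressions agree as vector-space maps $X\ot Y\to Y\ot X$: the twist $\phi_\lambda$ in the definition of $R^G_{\lambda,\mu}$ is exactly what absorbs the conjugation produced by sliding the group element $\lambda$ past $b_i$.

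The main obstacle is really just the bookkeeping of conventions: fixing a consistent sign in the identification $\nu\cdot x\leftrightarrow f_{\pm\nu,X}(x)$ that is simultaneously compatible with the semidirect-product relation and with the $G$-braiding formula. Once this choice is pinned down, every remaining axiom follows formally from the Hopf-coalgebra axioms for $\uD$ and the cocycle identity for equivariantizations.
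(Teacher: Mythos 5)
Your proof follows the paper's own route: identify an equivariant object $(V,\{f_{\nu}\})$ with the $\C[G]\ltimes D_{\la}$-module structure $\nu\cdot v=f_{\nu}(v)$, observe that $D_{\la}$-linearity of each $f_{\nu}$ is exactly the semidirect-product relation, and then check that the equivariantized braiding $c^G_{X,Y}$ of (\ref{eq: braiding on equivariantization}) coincides with left multiplication by $R^G_{\la,\mu}=(1\ot\phi_{\la})R_{\la,\mu}$. The sign ambiguity you flag is real and worth resolving rather than deferring to ``bookkeeping'': the paper's stated convention $h\cdot_{\nu}x=\phi_{\nu}(h)x$ on $\phi_{\nu}(X)$ is in tension with the requirement $f_{\mu}(\phi_{\mu}^{-1}(h)v)=hf_{\mu}(v)$ that is actually needed for the semidirect-product module condition (and for your $D_{\mu}$-linearity step $f_{\la,Y}(b_iy)=\phi_{\la}(b_i)f_{\la,Y}(y)$ to hold), so one of the two $\phi$'s must be inverted; once this is fixed the argument is correct.
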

\begin{proof}
Let $(V,f)\in \CC^G$, say $V\in \CC_{\la}$. Since $\v_{\mu}(V)=V$ as vector spaces, we can define a $G$-action on $V$ by $\mu\cdot v=f_{\mu}(v)$. To say that $f_{\mu}:\v_{\mu}(V)\to V$ is a morphism is equivalent to $f_{\mu}(\v_{\mu}^{-1}(h)v)=hf_{\mu}(v)$ for each $h\in D_{\la}$ or, in terms of the $G$-action, $\mu\cdot (\v_{\mu}^{-1}(h)v)=h (\mu\cdot v)$. This is equivalent to $V$ being a module over $\C[G]\ltimes D_{\la}$. It is easy to see that the braiding $c^G_{X,Y}$ defined in (\ref{eq: braiding on equivariantization}) is left multiplication by $R^G_{\la,\mu}$, proving that the equivalence is braided.
\end{proof}

\subsection{Twisted Drinfeld doubles}
\label{subs: TDDs}
Suppose $G=(\C^n,+)$ acts over a finite dimensional Hopf algebra $H$ by Hopf automorphisms $\phi_{\la}$. We define a $G$-braided Hopf $G$-coalgebra as follows: as a vector space, let $D(H)_{\la}=H^*\ot H$ for each $\la\in G$. The multiplication is given by $$(p\ot a)(q\ot b)=\lb q_{(1)},S^{-1}(\phi_{-\la}(a_{(3)})) \rb\lb q_{(3)},a_{(1)}\rb p\cdot q_{(2)}\ot a_{(2)}\cdot b$$
for $a,b\in H, p,q\in H^*$, where $\lb-, -\rb$ denotes the usual vector space pairing. The coproduct is given by $$\De_{\la,\mu}=(\id_{H^*}\ot\id_H\ot\id_{H^*}\ot \phi_{\la}^{-1})\circ \De_{D(H)}.$$
The antipode is $$S_{\la}(p\ot a)=(\e\ot S(\phi_{-\la}(a)))\cdot (p\circ S^{-1}\ot 1).$$
The group $G$ acts on each $D(H)_{\la}$ by $(\phi_{\la}^{-1})^*\ot \phi_{\la}$. This is $G$-braided with $R$-matrix $$R_{\la,\mu}=\sum \phi_{\la}(h_i)\ot h^i$$
where $(h_i)$ is a basis of $H$ and $(h^i)\sb H^*$ is the dual basis. With this structure, we call $\{D(H)_{\la}\}_{\la\in G}$ the {\em twisted Drinfeld double} of $H$ and denote it by $\uDH$. The following is shown in \cite{LN:TDD}.

\begin{proposition}
\label{prop: rel Drinfeld and TDD}
The category $\coprod_{\la\in G} \Rep(D(H)_{\la})$ is equivalent to the relative Drinfeld center $\ZZ_{\Rep(H)}(\Rep(H)\rtimes G)$ as a braided $G$-crossed category.
\end{proposition}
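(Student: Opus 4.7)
The plan is to adapt the standard equivalence $\Rep(D(H)) \simeq \ZZ(\Rep(H))$ to the twisted $G$-crossed setting, carefully tracking the twists $\phi_\la$ built into the multiplication, coproduct and $R$-matrix of $\uDH$. First, I would define a functor $F: \coprod_{\la \in G} \Rep(D(H)_\la) \to \ZZ_{\Rep(H)}(\Rep(H) \rtimes G)$. Given $V \in \Rep(D(H)_\la)$, the restriction of the action to $1 \otimes H \subset D(H)_\la$ makes $V$ an $H$-module, so that $(V,\la)$ is an object of $\Rep(H) \rtimes G$. Finite-dimensionality of $H$ converts the action of $H^* \otimes 1$ into a right $H$-coaction $\delta(v) = \sum v_{(0)} \otimes v_{(1)} \in V \otimes H$, and I would set the half-braiding to be
$$\sigma^V_W : (W,0) \otimes (V,\la) = (W \otimes V,\la) \longrightarrow (V \otimes \phi_\la(W),\la) = (V,\la) \otimes (W,0), \qquad w \otimes v \mapsto \sum v_{(0)} \otimes v_{(1)} \cdot w,$$
where the $H$-action on the codomain absorbs the $\phi_\la$-twist.

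Next I would verify the axioms. The hexagon identity $\sigma^V_{W_1 \otimes W_2} = (\sigma^V_{W_1} \otimes \id)(\id \otimes \sigma^V_{W_2})$ is a formal consequence of the coassociativity of $\delta$. The crucial check is the $H$-linearity of $\sigma^V_W$ into the twisted module $\phi_\la(W)$: this is precisely what the twisted multiplication of $D(H)_\la$ encodes, the factor $S^{-1}(\phi_{-\la}(a_{(3)}))$ being engineered so that moving an element of $H$ past an element of $H^*$ incurs exactly the twist $\phi_\la$. Monoidality of $F$ reduces in turn to the fact that the twist $\id^{\otimes 3} \otimes \phi_\la^{-1}$ appearing in $\Delta_{\la,\mu}$ is exactly what matches the tensor product $(V,\la) \otimes (V',\mu) = (V \otimes \phi_\la(V'), \la+\mu)$ in $\Rep(H) \rtimes G$. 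Compatibility with the $G$-crossed braiding is then automatic: the canonical braiding on the relative center is given by the half-braiding, and direct evaluation shows that $\sigma^V_{V'}$ coincides with left multiplication by $R_{\la,\mu} = \sum \phi_\la(h_i) \otimes h^i$ followed by the transposition.

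To show $F$ is an equivalence I would construct an inverse functor: given $(X,\la,\sigma) \in \ZZ_{\Rep(H)}(\Rep(H) \rtimes G)$, applying $\sigma_H$ (where $H$ denotes the left regular $H$-module) to $1 \otimes x$ extracts a right $H$-coaction $X \to X \otimes H$, which dualizes to an $H^*$-action on $X$; together with the original $H$-action this assembles into a $D(H)_\la$-module structure, and the two constructions are inverse to each other up to the evident natural isomorphisms. The main obstacle will be the careful bookkeeping of the twists $\phi_\la$ across the multiplication and coproduct of $D(H)_\la$, the tensor product in $\Rep(H) \rtimes G$, and the $H$-linearity condition in the half-braiding; once consistent conventions are fixed, each diagrammatic verification reduces to the classical $G$-trivial case of $\Rep(D(H)) \simeq \ZZ(\Rep(H))$ with an appropriate insertion of $\phi_\la$ in the correct tensor slot.
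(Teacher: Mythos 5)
The paper does not prove Proposition~\ref{prop: rel Drinfeld and TDD} itself: it defers entirely to the citation \cite{LN:TDD} (``The following is shown in \cite{LN:TDD}''), so there is no in-paper argument to compare your proposal against line by line. Your strategy---sending a $D(H)_\la$-module $V$ to the $H$-module $(V,\la)$ with half-braiding induced by dualizing the $H^*$-part of the action into an $H$-coaction, then checking that the twisted multiplication, the $\phi_\la^{-1}$-shifted coproduct and the $R$-matrix $\sum \phi_\la(h_i)\ot h^i$ line up with the crossed-product tensor structure and $G$-crossed braiding---is the natural adaptation of the classical $\Rep(D(H))\simeq\ZZ(\Rep(H))$ equivalence and is almost certainly what \cite{LN:TDD} does.

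That said, the two verifications you describe as ``precisely what the twisted multiplication encodes'' and ``a formal consequence of coassociativity'' are where the actual work lives, and you have not executed them. Concretely: $H$-linearity of $\sigma^V_W: (W,0)\ot(V,\la)\to (V,\la)\ot(W,0)$ lands in the module $\phi_\la(W)$ (action $h\cdot_\la w = \phi_\la(h)w$), so it requires a $\phi_\la$-twisted Yetter--Drinfeld compatibility, $(h_{(2)}v)_{(0)}\ot \phi_\la\bigl((h_{(2)}v)_{(1)}\bigr)h_{(1)} = h_{(1)}v_{(0)}\ot \phi_\la\bigl(h_{(2)}v_{(1)}\bigr)$, which is \emph{not} the image of the ordinary YD condition under $\id\ot\phi_\la$; you would need to derive it directly from the product formula $(1\ot a)(q\ot 1)=\lb q_{(1)},S^{-1}(\phi_{-\la}(a_{(3)}))\rb\lb q_{(3)},a_{(1)}\rb\, q_{(2)}\ot a_{(2)}$. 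Likewise for monoidality you should exhibit the tensor isomorphism explicitly: restricting $\De_{\la,\mu}$ to $1\ot H$ gives $b\mapsto b_{(1)}\ot \phi_\la^{-1}(b_{(2)})$, while $(V,\la)\ot(V',\mu)=(V\ot\phi_\la(V'),\la+\mu)$ has $b$ act by $b_{(1)}\ot\phi_\la(b_{(2)})$; reconciling the sign of the twist (inverse versus not) depends on which of $\phi_\la(V')$ or $\phi_{-\la}(V')$ the crossed product uses and on the convention for $\De_{D(H)}$, and until you fix those carefully the claimed match is not obvious. In short, your outline is the right one, but the twist bookkeeping you flag as ``the main obstacle'' is exactly what a complete proof must supply, and it is not automatic.
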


\def\uA{\underline{A}}\def\uwtA{\underline{{A^J}}}

Now let $A_{\la}=D(H)_{\la}^G=\C[G]\ltimes D(H)_{\la}$, and $\uA=\uD^G$. It is easy to see that, in the semidirect product $$J_{\la,\mu}=(1_{A_{\la}}\ot\phi_{\la/2})\in A_{\la}\ot A_{\mu}$$
defines a Drinfeld twist on $\uA$. Thus, we get a Drinfeld twisted semi-direct product $\uA^J$ which is braided with $R$-matrix becomes
\begin{align}
    \label{eq: R-matrix in AJ}
    \begin{split}
R^J_{\la,\mu}&=\tau(J_{\mu,\la})R^G_{\la,\mu}J_{\la,\mu}^{-1}\\
    &=\phi_{\mu/2}\ot 1\cdot 1\ot \phi_{\la}\cdot R_{\la,\mu}\cdot 1\ot\phi_{-\la/2}\\
    &=\phi_{\mu/2}\ot\phi_{\la/2}\cdot(\id\ot\phi_{\la/2})(R_{\la,\mu}).
    \end{split}
\end{align}
\begin{comment}
This means we get a new quasi-triangular Hopf $G$-coalgebra $\uwtA$ with the same algebras $D(H)_{\la}$ as before but with coproduct $\wt{\De}$ given by
$$\wDe_{\la,\mu}=J_{\la,\mu}\cdot\De_{\la,\mu} \cdot J_{\la,\mu}^{-1}.$$
Moreover, we get a new $R$-matrix given by 
\begin{align*}
    \RJ_{\la,\mu}=\tau(J_{\mu,\la})\cdot R^A_{\la,\mu}\cdot J_{\la,\mu}^{-1}=\phi_{\mu/2}\ot \phi_{\la/2}\cdot \phi_{\la/2}(h_i)\ot h^i.
\end{align*}
The identity functor $\Rep(\uA,\De)\to\Rep(\uwtA,\wDe)$ is a monoidal equivalence if we define the structure maps $J(\la,\mu):F(X\ot Y)\to F(X)\ot F(Y)$ for $X\in\Rep(A_{\la}), Y\in\Rep(A_{\mu})$ by $J(\la,\mu)(x\ot y)=J_{\la,\mu}\cdot(x\ot y)$, where $x\in X, y\in Y$.
\end{comment}
\def\AJ{\wt{A}}

\begin{proposition}
    The category $\Rep(\uwtA)$ is braided equivalent to the equivariantization $\ZZ_{\Rep(H)}(\Rep(H)\rtimes G)^G.$
\end{proposition}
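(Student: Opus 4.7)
The plan is to establish the equivalence by concatenating three braided equivalences already available from the preceding subsections. First, I would invoke Proposition \ref{prop: rel Drinfeld and TDD} to identify $\Rep(\uDH)=\coprod_{\la\in G}\Rep(D(H)_{\la})$ with the relative Drinfeld center $\ZZ_{\Rep(H)}(\Rep(H)\rtimes G)$ as braided $G$-crossed categories. Since equivariantization is functorial on braided $G$-crossed categories (it only depends on the $G$-action, the grading, and the $G$-braiding), this equivalence passes to an equivalence
\[
\ZZ_{\Rep(H)}(\Rep(H)\rtimes G)^G \simeq \Rep(\uDH)^G
\]
of braided categories in the ordinary sense.

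Second, I would apply Proposition \ref{prop: equivariantization is SS} to $\uD=\uDH$. This yields a braided equivalence $\Rep(\uDH)^G \simeq \Rep(\uDH^G)=\Rep(\uA)$, where the $R$-matrix on the right is $R^G_{\la,\mu}=(1\ot\phi_{\la})\cdot R_{\la,\mu}$. Third, I would appeal to Lemma \ref{lemma: Drinfeld twist gives equivalence} with the family $J=\{J_{\la,\mu}\}$, $J_{\la,\mu}=1\ot\phi_{\la/2}$, which provides a braided equivalence $\Rep(\uA)\simeq \Rep(\uA^J)=\Rep(\uwtA)$ whose $R$-matrix is given by (\ref{eq: R-matrix in AJ}). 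Composing the three equivalences gives exactly the statement of the proposition.

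The only content that needs to be checked is that $J$ really is a Drinfeld twist on $\uA$, i.e.\ satisfies
\[
(1\ot J_{\mu,\nu})\cdot[(\id\ot\De_{\mu,\nu})(J_{\la,\mu+\nu})]=(J_{\la,\mu}\ot 1)\cdot[(\De_{\la,\mu}\ot\id)(J_{\la+\mu,\nu})].
\]
Because each $\phi_{\la/2}$ lies in the group part $\C[G]\sb A_{\la}$ and is a group-like element of the Hopf $G$-coalgebra $\uA$, the coproducts reduce to $\De_{\mu,\nu}(\phi_{\la/2})=\phi_{\la/2}\ot\phi_{\la/2}$, and the identity becomes the tautology $1\ot\phi_{\la/2}\ot\phi_{(\la+\mu)/2}=1\ot\phi_{\la/2}\ot\phi_{\la/2}\phi_{\mu/2}$, which holds since $\la\mapsto\phi_{\la/2}$ is a group homomorphism into $G\sb A^{\times}$. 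A parallel short computation using $R^G_{\la,\mu}=(1\ot\phi_{\la})R_{\la,\mu}$ confirms that the twisted $R$-matrix
\[
R^J_{\la,\mu}=\tau(J_{\mu,\la})\cdot R^G_{\la,\mu}\cdot J_{\la,\mu}^{-1}=\phi_{\mu/2}\ot\phi_{\la/2}\cdot(\id\ot\phi_{\la/2})(R_{\la,\mu})
\]
agrees with (\ref{eq: R-matrix in AJ}).

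The main (mild) obstacle is bookkeeping: one must keep straight the two simultaneous sources of twisting, namely the Drinfeld twist $J$ and the semidirect product action $\phi_{\la}$ entering $R^G_{\la,\mu}$, and verify they interact correctly. Once the group-likeness of $\phi_{\la/2}$ is used, both the Drinfeld twist axiom and the final form of $R^J$ fall out immediately, and the three equivalences above concatenate to prove the proposition.
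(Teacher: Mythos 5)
Your proposal is correct and follows exactly the same route as the paper: concatenate the Drinfeld-twist equivalence of Lemma \ref{lemma: Drinfeld twist gives equivalence}, the semidirect-product/equivariantization equivalence of Proposition \ref{prop: equivariantization is SS}, and the identification in Proposition \ref{prop: rel Drinfeld and TDD}, using that equivariantization is functorial in braided $G$-crossed equivalences. You simply spell out the Drinfeld-twist axiom check that the paper asserts without proof in Subsection \ref{subs: TDDs}, which is a welcome but minor amplification.
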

\begin{proof}
    Indeed, $\Rep(\uwtA)\cong\Rep(\uA)$ as braided categories as explained in Subsection \ref{subs: Drinfeld twist}, so the result follows from Propositions \ref{prop: equivariantization is SS} and \ref{prop: rel Drinfeld and TDD}.
\end{proof}

\subsection{TDD of the small quantum Borel}
\label{subs: TDD of small Borel}

\def\AJ{\wt{A}}
\def\Ea{E_{\a}}\def\Fa{F_{\a}}

\def\EE{\boldsymbol{E}}\def\KK{\boldsymbol{K}}\def\FF{\boldsymbol{F}}\def\KKK{\ov{\mathcal{K}}}
\def\wtKKK{\wt{\mathcal{K}}}

\def\Ula{\mathfrak{u}_{\la}}
\def\Umu{\mathfrak{u}_{\mu}}

\def\e{e}
\def\E{e}\def\F{f} \def\K{k}
\def\f{f}
Let $\gg,\hh,q,r$ be as in the beginning of Section \ref{section: unrolled}, in particular $\gg$ is simply-laced and $r$ coprime to the determinant of the Cartan matrix. Let $H=\uqb$ be the Borel part of the small quantum group $\uqg$. We will denote its usual generators by $\K_i,\E_i, i=1,\dots,n$. Then, $\hh^*\cong \C^n$ acts on $\uqb$ by Hopf automorphisms given by $\phi_{\la}(\E_i)=q^{2\la_i}\E_i$ and $\phi_{\la}(\K_i)=\K_i$ for each $\la\in\hh^*$. Here we denote $\la_i=\la(H_i)$ as usual. In \cite{Virelizier:Graded-QG}, it is shown that $D(\uqb)_{\la}$ is isomorphic to the algebra generated by $\K_i,\K'_i,\E_i,\F_i$, $i=1,\dots,n$ and with relations
\begin{align*}
\K_i\e_j&=q^{a_{ij}}\e_j\K_i, & \K_i\f_j&=q^{-a_{ij}}\f_j\K_i & \K_i\K_j&=\K_j\K_i, \\
\K'_i\e_j&=q^{-a_{ij}}\e_j\K'_i, & \K'_i\f_j&=q^{a_{ij}}\f_j\K'_i & \K'_i\K'_j&=\K'_j\K'_i, \\
[\e_i,\f_j]&=\d_{ij}\frac{\K_i-q^{-2\la_i}\K'_i}{q-q^{-1}}, & \K_i\K'_j&=\K'_j\K_i, & \K_i^{r}&=1, \\
\e_{\a}^{r'}&=0, & \f_{\a}^{r'}&=0, & (\K'_i)^r&=1 \\
\end{align*}
for $i,j=1,\dots,n$ and all $\a\in\Dep$. We will denote by $\Ula$
the quotient of $D(\uqb)_{\la}$ by the ideal generated by $\K_i\K'_i-1$ for $i=1,\dots,n$. The coproduct $\De_{\la,\mu}:\uu_{\la+\mu}\to \Ula\ot\Umu$ is given by
\begin{align*}
\De(\e_i)&=\e_i\ot\ \K_i+q^{-2\la_i}1\ot \e_i, & \De(\F_i)&=\K_i^{-1}\ot \f_i+\f_i\ot 1, & \De(\K_i)&=\K_i\ot \K_i.
\end{align*}

 Then, $\{\Ula\}_{\la\in\hh^*}$ is $\hh^*$-braided with $R_{\la,\mu}=\phi_{\la}\ot\id(\pi(h_i\ot h^i))$ where $\pi:D(\uqb)_{\la}\to\Ula$ is the projection. It is well-known that 
 $\pi(h_i\ot h^i)$ is the usual $R$-matrix of $\uqg$, which is $R_0=\KKK_0\cdot \TT_0$ where $\KKK_0$ and $\TT_0$ are as given in Subsection \ref{subs: braiding on weight}: $$\KKK_0=\frac{1}{r^n}\sum_{\a,\b\in Q_r}q^{-(\a,\b)}\K_{\a}\ot \K_{\b}, \ \ \TT_0=\sum_I c_I\cdot \E^I\ot \F^I.$$

\def\Urqg{\ov{U}_q(\gg)}

\subsection{New generators}
In $\Ula$, let $\KK_i=q^{\la_i}\K_i, \EE_i=q^{\la_i}\E_i, \FF_i=\F_i$. Then the only of the above relations that change are $$[\EE_i,\FF_i]=\frac{\KK_i-\KK_i^{-1}}{q-q^{-1}}$$
and $\KK_i^r=q^{r\la_i}$. Thus, we see that $\Ula$ is simply the quotient of $\Urqg$ by the relation $\KK_i^r=q^{r\la_i}$ for each $i$. Thus, there are root vectors $\EE_{\a}, \FF_{\a}$ in $\Ula$. By Lemma \ref{lemma: PBW belongs to V+} these are $$\EE_{\a}=q^{(\a,\la)}\E_{\a}, \ \FF_{\a}=\F_{\a}, \ \KK_{\a}=q^{(\a,\la)}k_{\a}$$
where $\E_{\a}, \F_{\a}, \k_{\a}$ are the root vectors in the old generators. In these new generators, the coproduct is given by
\begin{align*}
\De_{\la,\mu}(\EE_i)&=\EE_i\ot\ \KK_i+q^{-\la_i}1\ot \EE_i, & \De_{\la,\mu}(\FF_i)&=q^{\la_i}\KK_i^{-1}\ot \FF_i+\FF_i\ot 1, & \De_{\la,\mu}(\KK_i)&=\KK_i\ot \KK_i
\end{align*}
and the $R$-matrix becomes $R_{\la,\mu}=\KKK_{\la,\mu}\TT_{\la,\mu}$ with
$$\KKK_{\la,\mu}=\frac{1}{r^n}\sum_{\a,\b\in Q_r}q^{-(\a,\b)-(\a,\la)-(\b,\mu)}\KK_{\a}\ot \KK_{\b},$$
and
$$ \TT_{\la,\mu}=(\phi_{\la}\ot \id)(\TT_0)=\sum_I c_I\cdot q^{-(d(I),\la)}\cdot \phi_{\la}(\EE^I)\ot \FF^I=\sum_I c_I\cdot q^{(d(I),\la)}\cdot \EE^I\ot \FF^I.$$

\subsection{$\uAJ$ and the unrolled quantum group}
\label{subs: AJ and unrolled}
\def\AJ{A^J}
\def\uAJ{\uA^J}
\def\DeJ{\De^J}
\def\wDe{\De^J}
\def\RJ{R^J}
Though $\Ula$ is a quotient algebra of $\Urqg$ as seen above, its coproduct is different. We can remedy this by considering the Drinfeld twisted semi-direct product $\uAJ=\{\C[\hh^*]\ltimes \Ula\}_{\la\in\hh^*}$ of Subsection \ref{subs: TDDs}. Then the coproduct $\DeJ_{\la,\mu}:\AJ_{\la+\mu}\to \AJ_{\la}\ot \AJ_{\mu}$ becomes $$ \wDe_{\la,\mu}(\EE_i)=\EE_i\ot \KK_i+1\ot \EE_i, \ \wDe_{\la,\mu}(\FF_i)=\KK_i^{-1}\ot \FF_i+\FF_i\ot 1,\ \wDe_{\la,\mu}(\KK_i)=\KK_i\ot\KK_i.$$

Therefore, $\uA^J$ is isomorphic to the Hopf $\hh^*$-coalgebra obtained by setting $K_i^r=q^{r\la_i}$ in $\C[\hh^*]\ltimes \Urqg$ for each $i$. By (\ref{eq: R-matrix in AJ}), the $R$-matrix in $\uA^J$ is 
\begin{align}
   \RJ_{\la,\mu}=\phi_{\mu/2}\ot\phi_{\la/2}\cdot (\id\ot \phi_{\la/2})(R_{\la,\mu})=\wtKKK_{\la,\mu}\cdot \wt{\Theta}_{\la,\mu}
\end{align}
 where 
$$\wtKKK_{\la,\mu}=\phi_{\mu/2}\ot\phi_{\la/2}\cdot\KKK_{\la,\mu}, \ \ \wt{\Theta}_{\la,\mu}=(\id\ot\phi_{\la/2})(\TT_{\la,\mu})=\sum_I c_I \EE^I\ot\FF^I.$$

Note that conjugation by $\KK_i$ and $\phi_{\a_i/2}$ act the same way on $\AJ_{\la}$, hence we will mod out by $K_i=\phi_{\a_i/2}$ and still denote the quotient by $\AJ_{\la}$.

\begin{proposition}
\label{prop: A tilde contains unrolled}
The category of weight modules over $\UHr$ with weights in $\la+L_W$ embeds as the subcategory of $\Rep(\AJ_{\la})$ consisting of modules with diagonalizable $\hh^*$-action.
\end{proposition}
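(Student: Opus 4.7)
The plan is to construct mutually quasi-inverse functors between the category of weight modules over $\UHr$ with weights in $\la+L_W$ and the full subcategory of $\Rep(\AJ_\la)$ consisting of modules with diagonalizable $\hh^*$-action.

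Given a weight module $V$ over $\UHr$ with weights in $\la+L_W$, I would give $V$ an $\AJ_\la$-module structure by letting $\EE_i,\FF_i,\KK_i$ act as the unrolled generators $E_i,F_i,K_i$ and letting $\phi_\mu\in\C[\hh^*]$ act on each weight space $V(\nu)$ by the scalar $q^{2(\mu,\nu)}$. To check this is well defined, one verifies the defining relations of $\AJ_\la$ one at a time: the commutations $\phi_\mu\EE_i=q^{2(\mu,\a_i)}\EE_i\phi_\mu$ and $\phi_\mu\FF_i=q^{-2(\mu,\a_i)}\FF_i\phi_\mu$ follow from bilinearity of $(-,-)$ together with the fact that $\EE_i,\FF_i$ shift weights by $\pm\a_i$; the relation $\KK_i=\phi_{\a_i/2}$ becomes the identity $q^{\nu(H_i)}v=q^{2(\a_i/2,\nu)}v$ for $v\in V(\nu)$, which is the condition $K_i=q^{H_i}$ under the simply-laced identification $\nu(H_i)=(\a_i,\nu)$; and $\KK_i^r=q^{r\la_i}$ holds on $V(\nu)$ because $\nu-\la\in L_W$ gives $\nu(H_i)-\la(H_i)\in\Z$, so $q^{r\nu(H_i)}=q^{r\la_i}$. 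The $\hh^*$-action is diagonalizable by construction.

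For the inverse functor, given $W\in\Rep(\AJ_\la)$ with diagonalizable $\hh^*$-action, decompose $W=\bigoplus_\nu W(\nu)$ into the simultaneous eigenspaces on which $\phi_\mu$ acts as $q^{2(\mu,\nu)}$, indexed by $\nu\in\hh^*$. The relation $\KK_i=\phi_{\a_i/2}$ together with $\KK_i^r=q^{r\la_i}$ forces $q^{r\nu(H_i)}=q^{r\la_i}$, hence $\nu\in\la+L_W$; the commutation relations guarantee that $\EE_i,\FF_i$ shift eigenspaces by $\pm\a_i$. Defining the $\UHr$-action on $W$ by letting $E_i,F_i,K_i$ act via the $\Urqg$-generators and $H_i$ act as the scalar $\nu(H_i)$ on $W(\nu)$ produces a weight module; the Serre, Cartan, and commutation relations of $\UHr$ follow from the corresponding relations in $\AJ_\la$ once applied weight space by weight space.

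Fullness and faithfulness are then immediate because in both categories a morphism is a linear map intertwining all generator actions, and the two constructions above are the identity on underlying vector spaces. I expect the only delicate point is the converse direction, where one must verify that the combined relations $\KK_i=\phi_{\a_i/2}$ and $\KK_i^r=q^{r\la_i}$ pin down $\nu\in\la+L_W$ exactly (rather than in some larger or smaller subset of $\hh^*$) and that the resulting $H_i$-action is forced to be consistent with $K_i=q^{H_i}$; both points rely essentially on the identification of $\AJ_\la$ carried out in Subsection \ref{subs: AJ and unrolled}.
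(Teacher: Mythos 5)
Your proof is correct and follows essentially the same route as the paper: both directions are obtained by decomposing into weight spaces (resp.\ simultaneous $\hh^*$-eigenspaces), using $\KK_i=\phi_{\a_i/2}$ to match $K_i=q^{H_i}$, and using $\KK_i^r=q^{r\la_i}$ to pin down the weight set as $\la+L_W$; the paper just presents the two directions in the opposite order and is slightly more terse about fullness/faithfulness.
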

\begin{proof}
    Let $V\in\Rep(\AJ_{\la})$ and suppose the action of $\hh^*$ on $V$ is diagonalizable. For each $\mu\in\hh^*$ let $$V(\mu)=\{v\in V \ | \ \phi_z\cdot v=q^{2(\mu,z)}v \text{ for all } z\in\hh^*\}$$ so that $V=\oplus_{\mu\in\hh^*}V(\mu)$ (this holds because all characters $\hh^*\to\C^*$ can be written as $q^{2(\mu,z)}$ for some $\mu\in\hh^*$). Then $\KK_iv=q^{(\a_i,\mu)}v$ for all $v\in V(\mu)$ (since we mod out by $\KK_i=\phi_{\a_i/2}$). It is also easy to see that $\EE_i(V(\mu))\sb V(\mu+\a_i)$ and $\FF_i(V(\mu))\sb V(\mu-\a_i)$. Note also that since $\KK_i^r=q^{r\la_i}$ in $A_{\la}$ we must have $q^{r\la_i}=q^{r(\a_i,\mu)}$ for each $i$, which is equivalent to $\mu\in \la+L_W$. If for each $v\in V(\mu)$ we set $H_iv=(\a_i,\mu)v$, then $V$ becomes a weight module over $\UHr$ with weights in $\la+L_W$. Conversely, if $V$ is a weight $\UHr$-module with weights in $\la+L_W$, then $K_i^r$ acts by $q^{r\la_i}$ and on each weight space $V(\mu)$ we can define $\phi_z\cdot v=q^{2(\mu,z)}v$ for all $z\in\hh^*$. This defines a module structure over $\AJ_{\la}$ with diagonalizable $\hh^*$-action. 
\end{proof}

\def\UU{\mathcal{U}}
\def\UUa{\mathcal{U}_{\a}}
\def\CCC{\wt{\mathcal{C}}^H}
\def\CC{\mathcal{C}^H}

For each $\la\in\hh^*$, let $\CCC_{\la}$ be the category of weight modules over $\UHr$ with weights in $\la+L_R$. Let $\CCC=\coprod_{\la\in\hh^*}\CCC_{\la}$, then this is a ribbon category with ribbon structure induced from $\CC$. If $c_{V,W}$ denotes the braiding of $\CC$ (hence of $\CCC$), then $c^{\chi}_{V,W}=q^{(\la,\mu)}c_{V,W}$ is also a braiding in $\CCC$, where $V\in \CCC_{\la}, W\in\CCC_{\mu}$. The unframed link invariants are unaffected by using $c^{\chi}$.

\begin{comment}
Let $\CCC_{\la}$ be the category of weight modules over $\UHr$ with weights in $\la+L_W$ and let $\CCC=\coprod_{\la\in \hh^*} \CCC_{\la}$. This is braided with the braiding of Subsection \ref{subs: braiding on weight}. Then the above proposition implies there is an inclusion $\CCC\to \Rep(\uAJ)$. We will show this is braided after a minor modification of the braiding.\\

Let $\chi:\hh^*\ot\hh^*\to \C^*$ be the bicharacter defined by $\chi(\la,\mu)=q^{(\la,\mu)}$. Then the braiding of $\CCC$ can be twisted by $\chi$ by setting $c^{\chi}_{V,W}=\chi(\la,\mu)c_{V,W}$ where $c_{V,W}$ is the usual braiding of $\CCC$ and $V\in \CCC_{\la}, W\in \CCC_{\mu}$.
\end{comment}
 \def\x{\boldsymbol{x}} \def\y{\boldsymbol{y}}
\begin{proposition}
\label{prop: braided inclusion}
    The inclusion of $(\CCC, c^{\chi})$ into $\Rep(\uAJ)=\coprod_{\la\in\hh^*}\Rep(\AJ_{\la})$ is monoidal and braided.
\end{proposition}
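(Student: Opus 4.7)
The well-definedness of the inclusion is already contained in Proposition \ref{prop: A tilde contains unrolled} applied to the sub-coset $\la + L_R \subset \la + L_W$. So the plan has two remaining tasks: check monoidality, and check compatibility of braidings. Both amount to direct computations, the latter being the only interesting one.

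For monoidality, I would observe that the formulas for $\wDe_{\la,\mu}$ on the generators $\EE_i, \FF_i, \KK_i$ of $\AJ_{\la+\mu}$ displayed in Subsection \ref{subs: AJ and unrolled} are literally the standard coproduct of $\UHr$ once these generators are identified with $E_i, F_i, K_i$ (this identification is the content of the last paragraph of Subsection \ref{subs: AJ and unrolled}, after modding out by $\KK_i = \phi_{\a_i/2}$). Since $\hh^*$ is grouplike in $\AJ$, the $\hh^*$-action on a tensor product of weight modules is diagonal, so the $\AJ_{\la+\mu}$-module structure on $V\ot W$ obtained via $\wDe_{\la,\mu}$ coincides with the $\UHr$-tensor product structure under the inclusion. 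Therefore the inclusion is strict monoidal.

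For the braiding, let $V\in \CCC_\la$, $W\in \CCC_\mu$, take $v\in V(\nu)$, $w\in W(\xi)$ with $\nu \in \la + L_R$, $\xi \in \mu + L_R$, and compute the action of $R^J_{\la,\mu} = \wtKKK_{\la,\mu}\cdot \wt{\Theta}_{\la,\mu}$ on $v\ot w$. Since $\KK_\a v = q^{(\a,\nu)}v$ and $\KK_\b w = q^{(\b,\xi)}w$, the action of $\KKK_{\la,\mu}$ rewrites as
\[
\KKK_{\la,\mu}(v\ot w) = \frac{1}{r^n}\sum_{\a,\b\in Q_r} q^{-(\a,\b)+(\a,\nu-\la)+(\b,\xi-\mu)}(v\ot w).
\]
Now $\nu-\la, \xi-\mu \in L_R$, so the same character-sum argument as in the proof of Lemma \ref{lemma: unrolled R-matrix restrict to ROSSO matrix in small quantum} collapses this to $q^{(\nu-\la,\xi-\mu)}(v\ot w)$. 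Multiplying by the action of $\phi_{\mu/2}\ot\phi_{\la/2}$, which by the defining $\hh^*$-weight condition gives $q^{(\nu,\mu)+(\la,\xi)}$, a cancellation yields
\[
\wtKKK_{\la,\mu}(v\ot w) = q^{(\la,\mu)}\,q^{(\nu,\xi)}(v\ot w) = \chi(\la,\mu)\,\HH(v\ot w).
\]
On the other hand, under the identification $\EE^I = E^I$, $\FF^I = F^I$, the factor $\wt{\Theta}_{\la,\mu} = \sum_I c_I\,\EE^I\ot \FF^I$ is exactly the operator $\TT$ of Subsection \ref{subs: braiding on weight}. Therefore $R^J_{\la,\mu}$ acts as $\chi(\la,\mu)\,\HH\TT$, and after post-composition with $\tau_{V,W}$ this is precisely $c^\chi_{V,W}$, establishing that the inclusion is braided.

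The only genuine step is the cancellation in the $\wtKKK$ computation; everything else is bookkeeping. The main obstacle to making the argument look clean is keeping the various weights ($\la,\mu$ labelling the grading versus $\nu,\xi$ labelling weight spaces inside the modules) straight, and making sure that the identification between the generators $\EE_i, \FF_i, \KK_i$ of $\AJ_\la$ and the unrolled generators $E_i, F_i, K_i$ from Section \ref{section: unrolled} is used consistently once we pass to the quotient $\KK_i = \phi_{\a_i/2}$.
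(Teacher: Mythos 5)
Your proof is correct and follows essentially the same route as the paper: monoidality is verified by matching the coproduct formulas on generators, and the braiding claim is reduced to showing $\wtKKK_{\la,\mu}$ acts as $\chi(\la,\mu)\HH$, via the character-sum argument of Lemma~\ref{lemma: unrolled R-matrix restrict to ROSSO matrix in small quantum} applied to elements of the root lattice $L_R$. The only cosmetic difference is that you first collapse $\KKK_{\la,\mu}$ to $q^{(\nu-\la,\xi-\mu)}$ and then multiply in the $\phi_{\mu/2}\ot\phi_{\la/2}$ factor, while the paper carries both together through a single chain of equalities; the algebra (and the crucial use of $\nu-\la,\xi-\mu\in L_R$ and $\gcd(r,\det A)=1$) is identical.
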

\begin{proof}
    That the inclusion is monoidal follows because the formulas for $\De(\EE_i),\De(\FF_i),\De(\KK_i)$ are the same as the corresponding ones in $\UHr$. We now show that this inclusion is braided. Let $V\in\CCC_{\la}, W\in \CCC_{\mu}$ and restrict the braiding $c^J$ of $\Rep(\uAJ)$ to $V\ot W$. This is given by $$v\ot w\mapsto \tau(\RJ_{\la,\mu}(v\ot w))=\tau(\wtKKK_{\la,\mu}\cdot \wt{\Theta}_{\la,\mu}(v\ot w))$$
    and $\tau$ is the usual transposition. But $\wt{\Theta}_{\la,\mu}$ is exactly the same $\TT$ of Subsection \ref{subs: braiding on weight}. Thus, we only need to check that $\wtKKK_{\la,\mu}$ acts as $q^{(\la,\mu)}\HH$. But if $v\in V(\x)$ and $v\in W(\y)$ then
    \begin{align*}
        \wtKKK_{\la,\mu}(v\ot w)&=\phi_{\mu/2}\ot\phi_{\la/2}\cdot\frac{1}{r^n}\sum_{\a,\b\in Q_r}q^{-(\a,\b)-(\a,\la)-(\b,\mu)}\KK_{\a}\ot \KK_{\b}(v\ot w)\\
        &=q^{(\mu,\x)+(\la,\y)}\frac{1}{r^n}\sum_{\a,\b\in Q_r}q^{-(\a,\b)-(\a,\la)-(\b,\mu)+(\a,\x)+(\b,\y)}(v\ot w)\\
        &=q^{(\mu,\x)+(\la,\y)}\frac{1}{r^n}\sum_{\a\in Q_r}q^{(\a,\x-\la)} \left(\sum_{\b\in Q_r}q^{(-\a-\mu+\y,\b)}\right)(v\ot w)\\
         &=q^{(\mu,\x)+(\la,\y)}\frac{1}{r^n}\sum_{\a\in Q_r}q^{(\a,\x-\la)} r^n\d_{-\a-\mu+\y,0}(v\ot w)\\
         &=q^{(\mu,\x)+(\la,\y)+(\y-\mu,\x-\la)}(v\ot w)\\
         &=q^{(\x,\y)+(\la,\mu)}(v\ot w)\\
         &=q^{(\la,\mu)}\HH(v\ot w).
    \end{align*}
    Note that to get the fourth equality we used that $\mu-\y\in L_R$ (and that $r$ is comprime to $\det(A)$) so the inside sum in the third equality can be treated as in Lemma \ref{lemma: unrolled R-matrix restrict to ROSSO matrix in small quantum}.
    %Note that $\mu-\yy\in L_R$ since $W\in \CCC_{\mu}$. UPS!! MODIFY DEFINITION OF \CCC, ONLY WEIGHTS IN \la+L_R.
   Thus, $c^J_{V,W}=q^{(\la,\mu)}c_{V,W}$, proving the proposition.
\end{proof}

\subsection{Ribbon structure}\label{subs: ribbon for TDDs} Let $\La_l$ be the left cointegral of $\uqb$ and $\mu_r\in \uqb^*$ be the right integral. Let $a\in \uqb, \a\in \uqb^*$ be the corresponding distinguished group-likes. A generalization of the usual Kauffman-Radford theorem (see \cite[Prop. 2.3]{LN:TDD}) states that ribbon structures on the twisted Drinfeld double $\underline{D(\uqb)}$ are classified by elements $b\in \uqb, \b\in\uqb^*$ and $p:\hh^*\to\C^*$ satisfying $b^2=a, \b^2=\a$, $\phi_{\la}(\La_l)=p(\la)^2\La_l$ for all $\la\in\hh^*$ and $S^2=\ad_{\b^{-1}}\circ\ad_b$. We call $(\b,b,p)$ a {\em Kauffman-Radford triple} (and $(\b,b)$ a Kauffman-Radford pair). The associated pivotal element is then given by $g_{\la}=p(\la)^{-1}\b\ot b$.\\

Let's find $(\b,b,p)$ for $\uqb$. The left cointegral is given by
\begin{align}
\label{eq: cointegral}
    \La_l=\left(\sum_{\c\in Q_r}\K_{\c}\right)\E^{I_{max}}
\end{align}
where $I_{max}$ is defined by $I_{max}(\b)=r'-1$ for all $\b\in\Dep$. The right integral is defined by $\mu_r(\K_{2\rho}^{1-r'}\E^{I_{max}})=1$ and $\mu_r=0$ on any other PBW basis element. From this, one can see the distinguished group-likes of $\uqb$ are given by
\begin{align*}
    a=\K_{2\rho}^{1-r'}, \ \a(\K_i)=q^{2}
\end{align*}
and $\a(\E_i)=0$ for $i=1,\dots,n$. Set 
\begin{align}
\b(\K_i)=q^{1-r'}, \b(\E_i)=0, \  b=\K_{2\rho}^{\frac{1-r'}{2}}=\K_{\rho}^{1-r'}, \ \ p(\la)=q^{(r'-1)(2\rho,\la)},
\end{align}
for all $i$, we claim that $(\b,b,p)$ is a Kauffman-Radford triple for $\uqb$. Clearly $\b^2=\a$ and $b^2=a$. We claim that $b\in \uqb$: if $r'$ is odd this is clear. If $r$ is even, since we assume $r$ coprime to $\det(A)$ it must be $\det(A)$ odd, but this only happens for $\sl_{n+1}$ with even $n$.  But then $$\K_{2\rho}=\prod_{i=1}^n\K_i^{i(n-i+1)}$$
and all powers of the $\K_i$'s are seen to be even. Hence $b\in \uqb$ as desired. One has $b\E_ib^{-1}=q^{(1-r')(\rho,\a_i)}\E_i=q^{1-r'}\E_i$ since $(2\rho,\a_i)=(\a_i,\a_i)$ for all $i$ (in the simply-laced case). Also $\ad_{\b^{-1}}(\E_i)=\b(\K_i)\E_i=q^{1-r'}\E_i$. Hence $S^2=\ad_{\b^{-1}}\circ \ad_b$ as desired. Moreover, one has $\phi_{\la}(\La_l)=q^{2(r'-1)(2\rho,\la)}\La_l$ hence  $\phi_{\la}(\La_l)=p(\la)^2\La_l$ as desired. \\

The $\K'_i$'s in $\uqb^*\sb D(\uqb)$ are characterized by $\lb \K'_i,\K_j\rb=q^{-a_{ij}}$. Hence, one has $\b=(\K'_{\rho})^{r'-1}$ in the $\K'_i$'s since $\lb k'_{\rho},k_j\rb=q^{-(\rho,\a_j)}=q^{-1}$. It follows that $\b$ maps to $\K_{\rho}^{1-r'}$ in $\Ula$ so that the pivot $g_{\la}=p(\la)^{-1}\b\ot b$ maps to $p(\la)^{-1}\K_{2\rho}^{1-r'}$. This is exactly $\KK_{2\rho}^{1-r'}$ in the rescaled generators. Passing to $\uA$ and then $\uAJ$ does not change the pivot (see \cite[Sect. 5]{LNV:genus}). Thus, we have shown that the embedding of Proposition \ref{prop: braided inclusion} preserves the pivotal structures. Since the ribbon structure is determined by the associated pivotal structure and viceversa, this implies:

\begin{proposition}
\label{prop: ribbon inclusion}
    The Kauffman-Radford triple $(\b,b,p)$ above induces a ribbon structure on $\ZZ_{rel}(\Rep(\uqb)\rtimes \hh^*)$ and its equivariantization that makes the embedding of Proposition \ref{prop: braided inclusion} a ribbon embedding.
\end{proposition}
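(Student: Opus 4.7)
The plan is to obtain the ribbon structure on $\underline{D(\uqb)}$ from the Kauffman-Radford triple $(\b,b,p)$, transport it through the semi-direct product with $\hh^*$ and the Drinfeld twist $J$ to a ribbon structure on $\uAJ$, and then identify this with the intrinsic ribbon structure of $\CCC$ by matching pivots. The key conceptual input is that, in a braided rigid category, a ribbon twist compatible with a fixed pivotal structure is uniquely determined (for a ribbon Hopf algebra, the ribbon element is $v=ug^{-1}$, with $u=\sum S(b_i)a_i$ built from $R$), so once braiding and pivot match, the twist matches automatically.

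First, I would invoke the generalization of Kauffman-Radford recalled as \cite[Prop.~2.3]{LN:TDD}: the hypotheses $b^2=a$, $\b^2=\a$, $S^2=\ad_{\b^{-1}}\circ\ad_b$, and $\phi_\la(\La_l)=p(\la)^2\La_l$ have been verified in the discussion preceding the statement, so the triple produces a ribbon structure on $\underline{D(\uqb)}$ with pivotal element $g_\la=p(\la)^{-1}\b\otimes b$. Via Proposition \ref{prop: rel Drinfeld and TDD} this descends to a ribbon structure on $\ZZ_{\Rep(\uqb)}(\Rep(\uqb)\rtimes\hh^*)$, and via Proposition \ref{prop: equivariantization is SS} and the passage to $\uAJ$ of Subsection \ref{subs: TDDs} to one on its equivariantization $\simeq\Rep(\uAJ)$. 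The Drinfeld twist step is where the most care is required: although $J$ modifies both the coproduct and the $R$-matrix, it leaves the pivotal element intact, as noted in \cite[Sect.~5]{LNV:genus}, so $p(\la)^{-1}\b\otimes b$ remains the pivot in $\uAJ$.

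Next, I would compare the induced pivot on $\CCC$ with the pivot of (\ref{eq: pivotal on Verma}). The calculation done just before the statement shows that $g_\la$ maps to $\KK_{2\rho}^{1-r'}$ in $\AJ_\la$. On a weight vector of weight $\mu\in\la+L_R$ this acts by the scalar $q^{(1-r')(2\rho,\mu)}$; setting $\mu=\la-d(A)$ for $v_A\in V_\la$, the inverse scalar $q^{(r'-1)(2\rho,\la-d(A))}$ is exactly the eigenvalue of $j_{V_\la}$ on $v''_A$ prescribed by (\ref{eq: pivotal on Verma}). Hence the braided embedding of Proposition \ref{prop: braided inclusion} is also pivotal. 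Combined with braiding compatibility (already established in Proposition \ref{prop: braided inclusion}), the uniqueness observation of the first paragraph then forces the ribbon twists to coincide, giving the desired ribbon embedding.

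The step I expect to demand the most vigilance is the bookkeeping in the transition from the generators $\K_i$ of $\uqb$ to the rescaled generators $\KK_i=q^{\la_i}\K_i$ of $\AJ_\la$, combined with the Drinfeld-twist invariance of the pivot: the match $p(\la)^{-1}\K_{2\rho}^{1-r'}=\KK_{2\rho}^{1-r'}$ relies on the precise choice $p(\la)=q^{(r'-1)(2\rho,\la)}$, and an off-by-one in the exponent or a missing factor of $p(\la)$ would obstruct the identification with (\ref{eq: pivotal on Verma}). No new ingredient beyond this careful accounting is needed.
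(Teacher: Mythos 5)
Your proposal follows essentially the same route as the paper: invoke the generalized Kauffman-Radford theorem to produce a ribbon structure on the twisted Drinfeld double, track the pivot $g_\la=p(\la)^{-1}\b\otimes b$ through the semidirect product and Drinfeld twist (where it is unchanged) to $\KK_{2\rho}^{1-r'}$ in $\AJ_\la$, match with the pivot of (\ref{eq: pivotal on Verma}), and conclude by the mutual determination of braiding$+$pivot and the ribbon twist. Your explicit appeal to $v=ug^{-1}$ spells out the uniqueness step that the paper dispatches in one sentence ("the ribbon structure is determined by the associated pivotal structure and vice versa"), but the substance is identical.
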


\def\ep{\epsilon}

\subsection{Proof of Theorem \ref{theorem: equivariantization}}
\label{subs: proof of Theorem 2}
This follows from 
\begin{align*}
    (\CCC,c^{\chi})\hookrightarrow \Rep(\uAJ)&\cong \Rep(\uA)=\Rep(\underline{D(\uqb)}^{\hh^*})\\
  &\cong\Rep(\underline{D(\uqb)})^{\hh^*}\\
    &\cong\ZZ_{\Rep(\uqb)}(\Rep(\uqb)\rtimes\hh^*)^{\hh^*}.
\end{align*}

The first inclusion is Proposition \ref{prop: braided inclusion}, then we use Lemma \ref{lemma: Drinfeld twist gives equivalence}, Proposition \ref{prop: equivariantization is SS} and Proposition \ref{prop: rel Drinfeld and TDD} respectively. The part of the theorem concerning ribbon structures follows from Proposition \ref{prop: ribbon inclusion}.

\subsection{Recovering $P_{\uqb}^{\theta}(K)$} 
\label{subs: recovering P}
In \cite[Subs. 3.5]{LNV:genus}, given an oriented knot $K$, we defined a multivariable polynomial invariant by $$P_{\uqb}^{\theta}(K)=t_1^{w|\La_l|_1/2}\cdots t_n^{w|\La_l|_n/2} \ep_{D(\uqb')}(Z^{\theta}_{\underline{D(\uqb')}}(K_o)).$$ 
Here $K_o$ is a framed long knot whose closure is $K$, $Z^{\theta}_{\underline{D(\uqb')}}(K_o)$ is a ``twisted universal invariant" build from the twisted Drinfeld double of $\uqb'=\uqb\ot\C[t_1^{\pm 1},\dots, t_m^{\pm 1}]$ (the same TDD we defined above but where $t_i=q^{2\la_i}$), $\theta$ is the Hopf automorphism of $\uqb'$ defined by $\theta(\e_i)=t_ie_i, \theta(\K_i)=\K_i$ for every $i$, $w=w(K)$ is the writhe of $K_o$ and $(|\La_l|_1,\dots, |\La_l|_n)$ is the $\N^n$-degree of the cointegral $\La_l$ of $\uqb$. Here by twisted universal invariant we mean $Z^{\theta}_{\underline{D(\uqb')}}(K_o)$ takes care of the $G$-crossed structure of $\underline{D(\uqb')}$. However, this is computed from the usual universal invariant defined out of $\uA$ by 
\begin{align}
\label{eq: ZA is Z-TDD}
    Z_{\uA}^{\theta}(K_o)=Z^{\theta}_{\underline{D(\uqb')}}(K_o)\cdot \theta^w
\end{align}
see \cite[Subs. 3.3]{LNV:genus}.

\begin{proof}[Proof of Theorem \ref{corollary: P coincides with ADO}]
    
Set $t_i=q^{2\la_i}$ for every $i$, then $\theta=\phi_{\la}$. We will denote the universal invariants $Z^{\theta}|_{t_i=q^{2\la_i}}$ simply by $Z^{\la}$. Then $\underline{D(\uqb')}$ and $\uA$ are the same objects considered in this paper. By \cite[Lemma 5.3]{LNV:genus} we can use $Z_{\uAJ}^{\la}(K_o)$ instead of $Z_{\uA}^{\la}(K_o)$. Now, $V=V_{\la}$ is a module over $\AJ_{\la}$ and the $R$-matrix of $\uAJ$ acts as $c_{V,V}^{\chi}$ over $V_{\la}$ by Theorem \ref{theorem: equivariantization} and the $c_{V,V}^{\chi}$ invariant is the same as that using $c_{V,V}$ but multiplied by $q^{w(\la,\la)}$. Hence, for any $v\in V_{\la}$:
\begin{align}
\label{eq: ZAJ}
    Z_{\uAJ}^{\la}(K_o)v=q^{w(\la,\la)}\lb K_o\rb v.
\end{align}
Now, it is easy to see that $Z^{\la}_{\underline{D(\uqb')}}(K_o)v_0=\ep(Z^{\la}_{\underline{D(\uqb')}}(K_o))v_0$ and combining (\ref{eq: ZA is Z-TDD}) and (\ref{eq: ZAJ}) we get:
$$\ep(Z^{\la}_{\underline{D(\uqb')}}(K_o))v_0=Z_{\uA}^{\la}(K_o)\phi^{-w}_{\la}v_0= Z_{\uAJ}^{\la}(K_o)q^{-2w(\la,\la)}v_0=q^{-w(\la,\la)}\lb K_o\rb  v_0.$$
 From (\ref{eq: cointegral}) one sees that $|\La_l|_i=(r'-1)k_i$ for every $i$, where $2\rho=\sum_ik_i\a_i$. Hence $$(t_1^{|\La_l|_1}\cdots t_n^{|\La_l|_n})^{w/2}=q^{w(\la,(r'-1)2\rho)}$$ and thus
\begin{align}
    P_{\uqb}^{\theta}(K)|_{t_i=q^{2\la_i}}=q^{w(\la,(r'-1)2\rho)-w(\la,\la)} \lb K_o\rb=q^{-w(\la,\la-(r'-1)2\rho)}\lb K_o\rb
\end{align}
which is exactly the unframed invariant $\ADOgr'(K)$ as in (\ref{eq: unframed ADO}).

\end{proof}

\bibliographystyle{amsplain}
\bibliography{/Users/daniel/Desktop/Daniel/TEX/bib/referencesabr}

\end{document}